\newcommand{\dlpullback}[1][dl]{\save*!/#1-1.5pc/#1:(-1,1)@^{|-}\restore}
\newcommand{\drpullback}[1][dr]{\save*!/#1-1.5pc/#1:(-1,1)@^{|-}\restore}
\theoremstyle{definition}
\newtheorem{definition}{Definition}[section]
\newtheorem{example}[definition]{Example}
\newtheorem{remark}[definition]{Remark}
\theoremstyle{theorem}
\newtheorem{theorem}[definition]{Theorem}
\newtheorem{proposition}[definition]{Proposition}
\newtheorem{lemma}[definition]{Lemma}
\newtheorem{corollary}[definition]{Corollary}
\newtheoremstyle{numberfirst}
 {}
 {}
 {}
 {}
 {\bfseries}
 {}
 {.5em}
 {#2.~\thmnote{#3}}
\theoremstyle{numberfirst}
\newtheorem{taller}[definition]{$\!\!$}
\newenvironment{blanko}[1]%
{\begin{taller}{\bf #1}\normalfont}%
{\end{taller}}
   \renewcommand{\section}{\@startsection {section}{1}{\z@}%
   {-3.5ex \@plus -1ex \@minus -.2ex}%
   {2.3ex \@plus.2ex}%
   {\normalfont\large\bfseries}}
\newcommand{\A}{\mathcal{A}}
\newcommand{\B}{\mathcal{B}}
\newcommand{\C}{\mathcal{C}}
\newcommand{\D}{\mathcal{D}}
\newcommand{\E}{\mathcal{E}}
\newcommand{\F}{\mathcal{F}}
\newcommand{\G}{\mathcal{G}}
\renewcommand{\H}{\mathcal{H}}
\newcommand{\K}{\mathcal{K}}
\newcommand{\M}{\mathcal{M}}
\newcommand{\Oooh}{\mathcal{O}}
\renewcommand{\P}{\mathcal{P}}
\newcommand{\Q}{\mathcal{Q}}
\renewcommand{\S}{\mathcal{S}}
\newcommand{\T}{\mathcal{T}}
\newcommand{\U}{\mathcal{U}}
\newcommand{\X}{\mathcal{X}}
\newcommand{\Y}{\mathcal{Y}}
\renewcommand{\AA}{\mathbb{A}}
\newcommand{\GG}{\mathbb{G}}
\newcommand{\PP}{\mathbb{P}}
\newcommand{\too}{\longrightarrow}
\newcommand{\op}{\mathrm{op}}
\DeclareMathOperator{\Aut}{Aut}
\DeclareMathOperator{\Cat}{Cat}
\DeclareMathOperator{\End}{End}
\DeclareMathOperator{\eq}{\, eq}
\DeclareMathOperator{\Fun}{Fun}
\DeclareMathOperator{\Gpd}{Gpd}
\DeclareMathOperator{\Ho}{Ho}
\DeclareMathOperator{\Hom}{Hom}
\DeclareMathOperator{\id}{id}
\DeclareMathOperator{\Map}{Map}
\DeclareMathOperator{\Eq}{Eq}
\DeclareMathOperator{\colim}{colim}
\DeclareMathOperator{\Pre}{Pre}
\DeclareMathOperator{\Set}{Set}
\DeclareMathOperator{\hocolim}{hocolim}
\newcommand{\isopil}{\stackrel{\raisebox{0.1ex}[0ex][0ex]{\(\sim\)}}%
			{\raisebox{-0.15ex}[0.28ex]{\(\rightarrow\)}}}
\newcommand{\loccart}{locally cartesian}
\newcommand{\Loccart}{Locally cartesian}
\newcommand{\qi}{$\infty$-quasi}
\newcommand{\Qi}{$\infty$-Quasi}
\newcommand{\simplC}{\D}
\newcommand{\sEq}{\mathcal{E}\!q}
\newcommand{\sMap}{\mathcal{M}\!ap}
\newcommand{\intEq}{\underline{\Eq}}
\newcommand{\intMap}{\underline{\Map}}
\newcommand{\earrow}{e}
\begin{document}

\title{Univalence in locally cartesian closed $\infty$-categories}
\author{David Gepner and Joachim Kock}
\date{}
\maketitle

\begin{abstract}
  After developing the basic theory of locally cartesian
localizations of presentable locally cartesian closed $\infty$-categories, 
we establish the representability of equivalences and show that
univalent families, in the sense of Voevodsky, form a poset
isomorphic to the poset of bounded local classes, in the sense of
Lurie.  It follows that every $\infty$-topos has a hierarchy of ``universal'' univalent
  families, indexed by regular cardinals, and that $n$-topoi have univalent
  families classifying $(n-2)$-truncated maps.  We show that univalent families
  are preserved (and detected) by right adjoints to locally cartesian
  localizations, and use this to exhibit certain canonical univalent families in
  \qi topoi (certain $\infty$-categories of ``separated presheaves'',
  introduced here).  We also
  exhibit some more exotic examples of univalent families, illustrating that a
  univalent family in an $n$-topos need not be $(n-2)$-truncated, as well as
  some univalent families in the Morel--Voevodsky $\infty$-category of motivic
  spaces, an instance of a locally cartesian closed $\infty$-category which is not
  an $n$-topos for any $0\leq n\leq\infty$.  Lastly, we show that any
  presentable locally cartesian closed $\infty$-category is modeled by a
  combinatorial type-theoretic model category, and conversely that the $\infty$-category
  underlying a combinatorial type-theoretic model category is presentable and locally
  cartesian closed.  Under this correspondence, univalent families in
  presentable locally cartesian closed $\infty$-categories correspond to univalent
  fibrations in combinatorial type-theoretic model categories.
\end{abstract}

\tableofcontents

\section*{Introduction}

The connection between type theory and homotopy theory goes back
to Hofmann and Streicher~\cite{Hofmann-Streicher:98}, with
subsequent advances made by Awodey and
Warren~\cite{Awodey-Warren:0709.0248}, Gambino and
Garner~\cite{Gambino-Garner:0803.4349}, van den Berg and
Garner~\cite{vdBerg-Garner:0812.0298},
Lumsdaine~\cite{Lumsdaine:0812.0409}, and others.  It has reached a new
level of significance with Voevodsky's Univalence Axiom
\cite{VoevodskyV:notts}, which roughly stipulates that
intensional identity is homotopy equivalence.  This 
comes with the prospect of providing
a new foundation for mathematics in which the notion of
homotopy is built in at the foundational level, reflecting the important
mathematical practice of identifying algebraic structures if they are
isomorphic, categories if they are equivalent, and so on \cite{HoTT-book}.  On
the categorical level, the existence of a univalent universe also solves the
so-called coherence problem in categorical semantics of type theory: a coherent
choice of all type-theoretic operations can be made in terms of the universe and
exploiting its universal property.  Voevodsky explained his Univalent
Foundations Program in a series of 9 lectures in Oberwolfach in 2011
\cite{oberwolfach2011}, and established in particular a categorical model
containing a univalent universe: it is a certain Kan fibration in the category
of simplicial sets, and its univalence property is established using Quillen
model structures and a well-ordering trick.  A related construction is due to
Streicher~\cite{Streicher:MR3166196}, and some simplifications in
Voevodsky's proof were provided by Joyal and by Moerdijk.  A concise and
self-contained exposition of the proof was recently given by Kapulkin, Lumsdaine
and Voevodsky~\cite{Kapulkin-Lumsdaine-Voevodsky:1203.2553}.  
Shulman~\cite{Shulman:1203.3253}, building on Voevodsky's result, showed that
categories of inverse diagrams of simplicial sets provide other models, and
recently Shulman~\cite{Shulman:1307.6248} and Cisinski~\cite{Cisinski:1406.0058}
have found further models based on elegant Reedy categories.
The difficulty encountered in constructing these univalent universes
stems from the fact that many features of type theory are not invariant under
homotopy equivalence, which makes constructions delicate.  But in fact, the
univalence axiom itself is a very robust notion (and homotopy invariant, in particular).  

In the present contribution, as a complement to the original
type-theoretic, syntactic account of univalence 
exposed in detail in \cite{HoTT-book}, and
abstracting from the simplicial developments in
\cite{Kapulkin-Lumsdaine:1211.2851},
\cite{Kapulkin-Lumsdaine-Voevodsky:1203.2553}, we take the more intrinsic
viewpoint of presentable locally cartesian closed $\infty$-categories to study
univalence in its own right.
We relate it with concepts and results
of more geometric origin, with the aim of placing the notion in a broader
perspective.
While the most obvious examples of univalent families arise in $\infty$-topoi, the
most natural setting for the notion of univalence seems to be that of
(presentable) locally cartesian closed $\infty$-categories: not only because of the
distinguished role of locally cartesian closed categories in type theory, but
more so because the abstract notion of univalence is meaningful and behaves well
precisely in this context.  We prove some general results about univalent
families, related to local classes (in the sense of Lurie), factorization
systems, truncation, localizations and \qi topoi, and provide significant
examples of univalent families outside the realm of $\infty$-topoi.  Along the way
we develop general theory of presentable locally cartesian closed
$\infty$-categories, expanding upon the material found in Lurie's {\em Higher Topos
Theory} \cite{Lurie:HTT}, and generalizing basic results from the $\infty$-topos
case.

We emphasize that the majority of the methods and results of this paper are at the level of $\infty$-categories.
Nevertheless, in the final section we do take a few steps towards
obtaining strict models of type theory.
We show that any univalent family in a presentable locally cartesian closed
$\infty$-category can be lifted,
by a fairly
standard procedure, to the model categories where the current semantics of type
theory takes place.  We refer to such model categories as ``combinatorial type-theoretic'' 
model categories (cf.~Definition \ref{def:typemodel}). It is important 
to keep in mind that (even in the case of $\infty$-topoi) the univalent 
fibrations produced are typically not universes in the usual strict sense,
and hence 
do not provide new models validating the univalence axiom in type theory
in the same explicit way
that Voevodsky's, Shulman's, and Cisinski's constructions do.  
It is expected that every $\infty$-topos should be a model for
type theory in some sense,
and we hope the
present $\infty$-categorical foundations will prove useful to
establish this.  More generally, we speculate that eventually a genuinely
$\infty$-categorical semantics for type theory will be developed, bypassing
altogether the subtleties inherent in model categorical semantics.

\medskip

\noindent
{\bf Main results.}
We begin by characterizing presentable locally
cartesian closed $\infty$-categories as precisely the accessible locally cartesian
localizations\footnote{Called 
{\em semi-left-exact localizations} by 
Cassidy--H\'ebert--Kelly~\cite{Cassidy-Hebert-Kelly} for 
$1$-categories.} of $\infty$-topoi (cf.~Corollary~\ref{lccc=lccc}).  This is exploited 
throughout.
In Section~\ref{sec:Eq} we consider certain naturally
occurring presheaves of spaces defined on various slices $\C_{/S}$ of a locally
cartesian closed $\infty$-category $\C$: namely, the mapping sheaf $\sMap_{/S}(X,Y)$
(whose value at $T\in\C_{/S}$ is $\Map_{/T}(X\times_S T,Y\times_S T)$) and its
subfunctor $\sEq_{/S}(X,Y)$ of equivalences. The main result here is this:

\medskip

\noindent
{\bf Theorem.} (cf.~Theorem \ref{thm:eqrep})
{\em
  Let $S$ be an object in a locally cartesian closed
  $\infty$-category $\C$.  Then for each pair of objects $X \to S$ and $Y \to S$ of
  $\C_{/S}$, the right fibration $\sEq_{/S}(X,Y) \to \C_{/S}$ is a sheaf. If in addition $\C$ is presentable then the right fibration $\sEq_{/S}(X,Y)\to\C_{/S}$ is representable.}
\medskip

The representing object $\intEq_{/S}(X,Y)\to S$ is called the {\em internal
object of equivalences}, a subobject of the internal mapping object
$\intMap_{/S}(X,Y)\to S$.  This representability is important in its own
right for several reasons.  In the present context, it is simply a necessary 
prerequisite for being able to define univalence in a more
general setting.  A map $p:X\to S$ in $\C$ (which we think of as an $S$-indexed
family of objects of $\C$) determines, by pullback along the projections
$S\times S\to S$, a pair of objects $p_1^*X$ and $p_2^*X$ of $\C_{/S\times S}$,
and we write $\intEq_{/S}(X)=\intEq_{S\times S}(p_1^*X,p_2^*X)$ for the
resulting equivalence object.  This objects sits under the diagonal
$\delta_S\in\C_{/S\times S}$, and we say that the family $p:X\to S$ is {\em
univalent} if $\delta_S\to\intEq_{/S}(X)$ is an equivalence.

\medskip

\noindent {\bf Theorem.}~(cf.~Theorem \ref{poset}) 
{\em 
  For any presentable locally cartesian closed $\infty$-category $\C$, the
  equivalence classes of univalent families in $\C$ form a (possibly large)
  poset which is canonically isomorphic to the poset of bounded local
  classes of maps in $\C$.
}

\medskip

\noindent
A bounded local class of maps (see \cite[Section 6.1]{Lurie:HTT}) is roughly a class of maps which is closed under basechange and satisfies a certain sheaf condition.
Lurie shows\footnote{\cite{Lurie:HTT}, theorems 6.1.0.6 and
6.1.3.9} that in an $\infty$-topos $\X$ the class of all maps is local, hence it
follows that for each regular cardinal $\kappa$ the classifying family for
relatively $\kappa$-compact maps in $\X$ is a univalent family, an ``internal
universe''.  The close analogy between univalence and the $\infty$-topos axioms was
perhaps first pointed out by Joyal.  The univalence condition can be interpreted
as a descent property which allows one to glue together families into ``moduli
spaces'', and having this descent property for {\em all} families characterizes
precisely the $\infty$-topoi amongst the presentable locally cartesian closed
$\infty$-categories.

Theorem~\ref{poset} implies that in an $\infty$-topos, every univalent family 
is a subfamily of a universal family, i.e. a classifying family for relatively $\kappa$-compact maps.  This characterization extends to
general presentable locally cartesian closed $\infty$-categories by virtue of the following
result.

\medskip

\noindent
{\bf Theorem.}~(cf.~Theorem \ref{thm:induce-along-R})
{\em
  Let $L: \P\to\C \subset \P$ be an accessible locally cartesian
  localization of a presentable locally cartesian closed $\infty$-category
  $\P$.  Then $p:X\to S$ is a univalent family in $\C$ if
   and only if it is also a univalent family in $\P$.
}

\medskip

\noindent

Since any presentable locally cartesian closed $\infty$-category
embeds in this way into an $\infty$-topos, this gives some control over univalent
families in general, and shows in particular
that every univalent family must be
a subfamily of a universal family of the ambient $\infty$-topos.

Factorization systems play an important role in our treatment.  In
Section~\ref{sec:fact} we first establish the ($n$-connected, $n$-truncated)
factorization system in any presentable $\infty$-category $\C$ 
(Proposition~\ref{conntrunc}), and show that it is
stable when $\C$ is locally cartesian closed 
(Proposition~\ref{nconnstable}).
We obtain a rich supply of local classes in
an $\infty$-topos
by showing that  the right orthogonal class of any stable factorization system is local.
This result leads
to a simple proof of the fact that
in an $n$-topos, the class $\T_k$ of $k$-truncated maps is local for all 
$-2 \leq k \leq n-2$ (cf. Corollary~\ref{k<=n-2}).

While the locally cartesian closed $\infty$-categories correspond to locally
cartesian  localizations of $\infty$-topoi, there are a
number of other stronger exactness conditions of interest in practice, yielding
classes of $\infty$-categories intermediate between $\infty$-topoi and presentable
locally cartesian closed $\infty$-categories.  In Section~\ref{sec:quasi} we introduce
a general notion of \qi topos, by which we mean $\infty$-categories of
$\F$-separated objects for a suitable factorization system $(\E,\F)$, and with
respect to a compatible left-exact
localization of an ambient $\infty$-topos.  
This notion specializes to the classical notion of quasitopos in the
analogous $1$-categorical situation when $(\E,\F)$ is the epi-mono factorization 
system.

\medskip

\noindent
{\bf Theorem.} (cf.~Theorem \ref{thm:PQE})
{\em
  Let $\X$ be an $\infty$-topos, presented as a left-exact localization 
  $L:\P\to\X\overset{G}\subset\P$ of an $\infty$-topos $\P$ equipped with a
  stable factorization system 
  $(\E,\F)$ such that $GL$ preserves $\F$,
  and let $\Q\subset\P$ denote the full subcategory of $\F$-separated objects.
  Then the inclusion functors $\overline{G}:\X\to\Q$ and $G':\Q\to\P$ admit
  left adjoints $\overline{L}:\Q\to\X$ and $L':\P\to\Q$, respectively, such
  that $\overline{L}$ is left-exact and $G'L'$ preserves the factorization
  system $(\E,\F)$ and pullbacks over $L'$-local objects (so in particular 
  $L'$ is locally cartesian).
}

\medskip

\noindent
In this situation
it now follows from Theorem~\ref{thm:induce-along-R} that the 
canonical univalent families in the small $\infty$-topos induce
univalent families in the \qi topos, somewhat analogous to the
strong-subobject classifiers in classical quasitopoi.

While the constructions so far are ``top-down'', it is also interesting to
construct univalent families ``bottom-up'', which is the topic of
Section~\ref{sec:bundles}.  The smallest univalent families are bundle
classifiers: for every object $F$ of an $\infty$-topos, the universal bundle with
fiber $F$ is univalent (see~\ref{Fbundles}).  Bigger families can be obtained by
taking unions.  This viewpoint leads to some unexpected univalent families,
exemplifying in particular that a univalent family in an $n$-topos need not be
$(n-2)$-truncated (see~\ref{BG}).  We also exhibit some univalent families in
the locally cartesian closed $\infty$-category of motivic spaces
\cite{Morel-Voevodsky:IHES} constructed from certain group schemes (see \ref{Gm}
and \ref{E}), providing further examples of univalent families outside of topoi.

The current categorical semantics of type theory involves certain strict
fibration properties, which are necessary to get a literal interpretation of the
syntactic rules.  These strictness features have no intrinsic homotopical
content, but can be formulated in terms of $1$-categorical notions in a Quillen
model category.  We establish in the final Section~\ref{sec:type} that one can
always model univalent families in presentable locally cartesian closed
$\infty$-categories by univalent fibrations in what we call combinatorial type-theoretic model
categories, namely right-proper combinatorial Quillen model categories whose
underlying category is locally cartesian closed, and whose cofibrations are
precisely the monomorphisms.  The main result is this:

\medskip

\noindent
{\bf Theorem.} (cf.~Theorem \ref{lcc=>type})
{\em
  Every presentable locally cartesian closed $\infty$-category $\C$ can be
  presented as a combinatorial type-theoretic model category $\M$ (that is, the underlying
  $\infty$-category of $\M$ is equivalent to $\C$).
}

\medskip

{Furthermore, univalent families in the $\infty$-setting correspond
to univalent fibrations at the model level (Proposition~\ref{prop:univ=univ}).}
In the case of an $\infty$-topos, the object classifier {thus} lifts to a
univalent fibration in the model category, but it should be noted that it is not
a universe in the usual strict sense of type theory \cite{Shulman:1203.3253}.
One issue is that 
the type-forming operations on the model category are generally not
strictly stable under pullback, which seems necessary to get a model of
type theory.
This issue can be
fixed by means of the recent Lumsdaine--Warren coherence
theorem~\cite{Lumsdaine-Warren:1411.1736}.  The second issue is that the
universe obtained is only a {\em weak Tarski universe} in the sense of Shulman
(see \cite{nLab:}).  Roughly this means that the elements of the universe are
not themselves types, but are rather codenames for types, and that the universe
is equipped with operations mimicking the type formers, which under the codename
function are compatible with the real type-former operations up to 
equivalence (i.e.~homotopy).  It is an important question in type theory to figure
out to what extent weak Tarski universes can be used instead of strict ones, and
in particular how the weakening interferes with computational properties.

\bigskip

\noindent
{\bf Acknowledgements.}
This work was originally prompted by the preprints of
Kapulkin, Lumsdaine and Voevodsky~\cite{Kapulkin-Lumsdaine-Voevodsky:1203.2553}
and Shulman~\cite{Shulman:1203.3253}.  We have also benefited from 
conversations with
Peter Arndt, Steve Awodey, Nicola Gambino, 
Andr\'e Joyal, Raffael Stenzel, Urs Schreiber, Markus Spitzweck, and especially Mike 
Shulman.  {We thank the anonymous referees for pertinent remarks that
helped improve the exposition.}
The first author was partially supported by NSF grant DMS-1406529.
The second author was partially supported by grants
MTM2009-10359, 
MTM2010-20692 and MTM2103-42293-P 
              of Spain and by
SGR1092-2009 
                 of Catalonia.

\bigskip

\noindent
{\bf Terminology and notation.}
For simplicity we adhere to the terminology and
notation of Lurie [HTT] as much as possible.
It should be noted, though, that while Lurie often works in 
simplicial sets with the Joyal model structure, we work consistently
in the $\infty$-category of $\infty$-categories, as our statements and proofs
are invariant under equivalence, and indeed most objects are given
in terms of universal properties and therefore are determined only up to 
equivalence anyway.  Hence for example we deal with slice categories
and mapping spaces as $\infty$-categories, rather than distinguishing 
between the various non-isomorphic but equivalent 
simplicial sets that can be chosen to represent
these notions in the Joyal model structure.

\section{Locally cartesian closed $\infty$-categories}
\label{sec:lcc}

\begin{blanko}{Presentable locally cartesian closed $\infty$-categories.}
  Recall that an $\infty$-category $\C$ is {\em locally cartesian closed} if 
  $\C$ has pullbacks and, for any morphism $f:T\to S$ in $\C$, the associated 
  pullback functor
  $f^*:\C_{/S}\to\C_{/T}$ admits a right adjoint, {which is then 
  denoted $f_*$; the left adjoint is denoted $f_!$}.
  We will often assume that $\C$ is presentable; in this case, by the adjoint functor
  theorem,\footnote{[HTT 5.5.2.9]}
  since slices of presentable $\infty$-categories are again presentable,
  being locally cartesian closed is equivalent to colimits being
  universal, the condition used in Lurie~[HTT].
\end{blanko}

\begin{blanko}{\Loccart\ localizations.}
  Just as left-exact localizations are a central notion in topos theory,
the notion of locally cartesian localization,\footnote{Called 
{\em semi-left-exact localization} by Cassidy--H\'ebert--Kelly~\cite{Cassidy-Hebert-Kelly} for 
$1$-categories.} which we now introduce, plays
a similar role for locally cartesian closed $\infty$-categories.
  Let $\P$ be a presentable locally cartesian closed $\infty$-category and let $L :
  \P\to\C \subset \P$ be an accessible localization, with right adjoint
  inclusion functor $G:\C\to\P$.  We refer to the objects of $\C$ as 
  local objects.
  For each local object $S$ there is induced a localization functor
  $L_S: \P_{/S} \to \C_{/S}\subset \P_{/S}$ (with right adjoint inclusion
  functor $G_S$).
  For any map $f:T\to S$ between local objects we have
  commutative diagrams
$$
\xymatrix{
\P_{/S} \ar[r]^{L_S}  &\C_{/S}  \\
\P_{/T} \ar[u]^{f_!}\ar[r]_{L_T} & \C_{/T}\ar[u]_{f_!}
}
\qquad
\xymatrix{
\P_{/S}  \ar[d]_{f^*} &\ar[l]_{G_S}\C_{/S} \ar[d]^{f^*} \\
\P_{/T} & \C_{/T}\ar[l]^{G_T}  .
}
$$
We say that the localization $L$ is {\em \loccart} if it
commutes with basechange between local objects.
In other words, for all $f:T\to S$ in $\C$, the diagram
$$\xymatrix{
\P_{/S} \ar[r]^{L_S} \ar[d]_{f^*} &\C_{/S} \ar[d]^{f^*} \\
\P_{/T} \ar[r]_{L_T} & \C_{/T}
}$$
commutes.
Equivalently, for every diagram  $T \to S \leftarrow X$ in $\P$ such that $S$ and $T$
are local objects, the natural map
$$L(T\times_S X)\too LT\times_{LS} LX \simeq T\times_S LX$$ is an equivalence.
A morphism $\alpha$ in $\P$ is called a local equivalence when
$L(\alpha)$ is an equivalence in $\C$.  Similarly, for $S$ a local object,
a morphism $\alpha$ in 
$\P_{/S}$ is called a local equivalence when $L_S(\alpha)$ is an equivalence in 
$\C_{/S}$.
\end{blanko}

\begin{proposition}\label{loccartstars}
Let $\P$ be a presentable locally cartesian closed $\infty$-category and 
let $L:\P\to\C\subset\P$ be an accessible localization.  The following conditions are 
equivalent:
\begin{enumerate}
  \item $L$ is \loccart.

  \item For every morphism $f:T \to S$ between local objects, the functor
  $f^*:\P_{/S} \to \P_{/T}$ preserves local equivalences.

  \item For every morphism $f:T \to S$ between local objects, the functor 
  $f_*:\P_{/T}\too\P_{/S}$
sends $L_T$-local objects to $L_S$-local objects.
\end{enumerate}
\end{proposition}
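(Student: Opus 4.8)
The plan is to read all three conditions through the reflective slice localizations $L_S\colon\P_{/S}\to\C_{/S}$ and to pass between them using, on one hand, a pullback-pasting argument, and on the other, the adjunction $f^*\dashv f_*$ supplied by local cartesian closedness. The one background fact I will use throughout is that the inclusion $G\colon\C\hookrightarrow\P$, being a right adjoint, preserves limits; hence the pullback in $\P$ of any diagram of local objects is again local. Consequently $f^*\colon\C_{/S}\to\C_{/T}$ agrees with the ambient pullback, so that $f^*L_S X\simeq T\times_S LX$ with $T\times_S LX$ local, while $L_T f^*X\simeq L(T\times_S X)$. This identifies commutativity of the square in (1) with the equivalence $L(T\times_S X)\simeq T\times_S LX$ recorded after the definition, which is the form I will test against (2) and (3). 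I also use the standard facts that the unit $\eta_X\colon X\to G_S L_S X$ is an $L_S$-local equivalence and that a morphism is an $L$-local equivalence exactly when it is inverted by mapping into every local object.

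For (1)$\Rightarrow$(2) functoriality suffices: if $L_T f^*\simeq f^*L_S$ and $L(p)$ is an equivalence, then $L_T(f^*p)\simeq f^*L_S(p)$ is an equivalence, so $f^*p$ is a local equivalence. The converse (2)$\Rightarrow$(1) is where the main manipulation sits. Given an arbitrary $X\to S$, I will show that the base-changed unit $T\times_S X\to T\times_S LX$ is a local equivalence; applying $L_T$ and using that $T\times_S LX$ is already local then yields $L(T\times_S X)\simeq T\times_S LX$, which is (1). The point is to present this base-changed unit not as a base change along $f$ but as a base change of $\eta_X\colon X\to LX$ along the projection $g\colon T\times_S LX\to LX$, using the pasting identity $(T\times_S LX)\times_{LX}X\simeq T\times_S X$. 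Since both $LX$ and $T\times_S LX$ are local, $g$ is a morphism between local objects, so hypothesis (2) applies to $g$ and to $p=\eta_X$ and delivers exactly that $T\times_S X\to T\times_S LX$ is a local equivalence.

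The equivalence (2)$\Leftrightarrow$(3) is the familiar duality whereby a left adjoint inverts local equivalences iff its right adjoint preserves local objects, applied to $f^*\dashv f_*$. Concretely, for $Z\in\P_{/T}$ local and any morphism $\alpha$ in $\P_{/S}$, the adjunction gives $\Map(f^*\alpha,Z)\simeq\Map(\alpha,f_*Z)$. Reading this left to right shows that if $f_*$ sends local objects to local objects, then $f^*\alpha$ is a local equivalence whenever $\alpha$ is, since $\alpha$ is orthogonal to the local object $f_*Z$; reading it right to left shows that if $f^*$ inverts local equivalences, then $f_*Z$ is orthogonal to all local equivalences and hence local. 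I will note that the ``preserves local equivalences'' used here is the version for arbitrary morphisms of $\P_{/S}$; the displayed single-object form of (2) is a special case, and the pasting argument of the previous paragraph upgrades it to the general form, so the two are interchangeable.

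The step I expect to be the main obstacle is the bookkeeping around the slice localizations: confirming that pullbacks of local objects remain local and that $f^*$ on the $\C$-slices coincides with the ambient pullback, so that the two sides of the square in (1) are genuinely $L(T\times_S X)$ and $T\times_S LX$. Once this identification and the pullback-pasting rewriting of the base-changed unit are secured, the geometric equivalence (1)$\Leftrightarrow$(2) and the adjunction-theoretic equivalence (2)$\Leftrightarrow$(3) follow essentially by formal manipulation.
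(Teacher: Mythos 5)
Your proposal is correct, and two of its three legs coincide with the paper's: the functoriality argument for (1)$\Rightarrow$(2) and the adjunction/orthogonality argument relating (2) and (3) (the paper only writes out (2)$\Rightarrow$(3), but the converse direction you give is the same computation read backwards). Where you genuinely diverge is in how the loop back to (1) is closed. The paper goes (3)$\Rightarrow$(1) in one line: $f_*$ preserving local objects says exactly that the square $G_S\circ f_* \simeq f_*\circ G_T$ of right adjoints commutes, and passing to the mate square of left adjoints gives $L_T\circ f^*\simeq f^*\circ L_S$, which is the definition of \loccart. You instead prove (2)$\Rightarrow$(1) directly, by rewriting the base-changed unit $T\times_S X\to T\times_S LX$ as the pullback of $\eta_X\colon X\to LX$ along the projection $T\times_S LX\to LX$ — a morphism between local objects, since $G$ preserves limits and hence $T\times_S LX$ is local — and then applying (2) to that projection. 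Your route is longer but buys two things the paper's does not make explicit: it shows that the weak, single-morphism form of condition (2) as displayed (only maps $p\colon X\to S$ with $Lp$ an equivalence, rather than arbitrary local equivalences in the slice) already implies (1), so the two readings of (2) are interchangeable; and it produces the equivalence $L(T\times_S X)\simeq T\times_S LX$ by a concrete pasting computation rather than by uniqueness of adjoints. The paper's mate argument is slicker but silently relies on identifying the $f_*$ of $\C$ with the restriction of the $f_*$ of $\P$; both treatments are sound.
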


\begin{proof}
  (1)$\Rightarrow$(2): Suppose $L$ preserves basechange between local objects.
  If $\alpha$ is a local equivalence in $\P_{/S}$ then $L_S(\alpha)$ and hence 
  $f^*(L_S(\alpha))$ are 
  equivalences.  But the latter is equivalent to $L_T( f^*(\alpha) )$ by
  assumption, which is to say that $f^*(\alpha)$ 
  is a local equivalence.
  Hence $f^*$ preserves local equivalences.
  
  (2)$\Rightarrow$(3): Assuming that $f^*$ preserves local equivalences, let 
  $z:Z\to T$ be a local object in $\P_{/T}$.
  We must check that $f_*(z)$ is again a local object, now in $\P_{/S}$.
  For any local equivalence
  $\alpha:p \to 
  p'$ in $\P_{/S}$,
  we have a diagram
  $$
  \xymatrix{
  \Map_{/T}(f^*(p'), z) \ar[d]_\simeq\ar[rr] && \Map_{/T}(f^*(p),z)
  \ar[d]_\simeq\\
  \Map_{/S}(p', f_*(z)) \ar[rr] && \Map_{/S}(p,f_*(z)) ,
  }$$
  where the vertical maps are equivalences by adjunction.  Since $f^*$ preserves 
  local equivalences, the top horizontal map is an equivalence, and hence the bottom 
  horizontal map is an equivalence.  Since this is true for all local 
  equivalences $\alpha:p\to p'$, this is to say that $f_*(z)$ is a local object.
  Hence $f_*$ preserves local objects.

  (3)$\Rightarrow$(1):
  To say that each $f_*$ preserves local objects means that
  it is compatible with the inclusion of the slice in $\C$ to the slice in $\P$, so as to give a commutative 
  square of right adjoints
  $$
  \xymatrix{
\P_{/S}   &\ar[l]_{G_S}\C_{/S}  \\
\P_{/T} \ar[u]^{f_*}& \C_{/T} .\ar[l]^{G_T} \ar[u]_{f_*}
}
$$
The analogous square of left adjoints then also commutes, which is 
precisely what it means for $L$ to be \loccart.
\end{proof}

\begin{proposition}
  Let $\P$ be a presentable locally cartesian closed $\infty$-category, and let
  $L:\P\to\C\subset \P$ be an accessible \loccart\ localization.  Then 
  $\C$ is a presentable locally cartesian closed $\infty$-category.
\end{proposition}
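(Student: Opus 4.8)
The plan is to treat presentability and local cartesian closedness separately. Since $L$ is an accessible localization of the presentable $\i$-category $\P$, the $\i$-category $\C$ of local objects is again presentable\footnote{[HTT 5.5.4.15]}, so the only real content is to show that $\C$ is locally cartesian closed. Concretely, I must produce, for every $f:T\to S$ in $\C$, a right adjoint to the pullback functor $f^*:\C_{/S}\to\C_{/T}$. My strategy is \emph{not} to verify that $f^*$ preserves colimits and invoke the adjoint functor theorem (as the colimit-universality characterization would suggest), but rather to exhibit the right adjoint explicitly as a restriction of the functor $f_*$ already available in $\P$.

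First I would record two compatibilities between $\C$ and $\P$. Because $G:\C\to\P$ is a right adjoint it preserves all limits, and $\C$, being a reflective subcategory, is closed under limits in $\P$; hence for local $S$ the pullback of local objects along a map of local objects is again local and is computed as in $\P$. Dually, the $L_S$-local objects of $\P_{/S}$ are precisely the maps $X\to S$ with $X$ local, so that $\C_{/S}$ is identified with the full subcategory of $\P_{/S}$ spanned by these; under this identification $f^*:\C_{/S}\to\C_{/T}$ is simply the restriction of $f^*:\P_{/S}\to\P_{/T}$.

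Next, since $\P$ is locally cartesian closed, $f^*:\P_{/S}\to\P_{/T}$ has a right adjoint $f_*$. Here is where I would use the hypothesis that $L$ is \loccart: by Proposition~\ref{loccartstars}(3), $f_*$ carries $L_T$-local objects to $L_S$-local objects, so it restricts to a functor $\bar f_*:\C_{/T}\to\C_{/S}$. It then remains to check that $\bar f_*$ is right adjoint to the restricted $f^*$. This follows formally from full faithfulness of the slice inclusions together with the adjunction in $\P$: for local $p\in\C_{/S}$ and local $z\in\C_{/T}$ one has natural equivalences
$$\Map_{\C_{/S}}(p,\bar f_* z)\simeq \Map_{\P_{/S}}(p,f_* z)\simeq \Map_{\P_{/T}}(f^* p,z)\simeq \Map_{\C_{/T}}(f^* p,z),$$
exactly as in the argument for (2)$\Rightarrow$(3) of Proposition~\ref{loccartstars}.

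The steps involving $\P$ are routine; the main point requiring care is the pair of identifications in the second paragraph --- that pullbacks in $\C$ agree with those in $\P$, and that $\C_{/S}$ is the $\i$-category of $L_S$-local objects over $S$ --- since these are exactly what let me transport the $\P$-adjunction down to $\C$, and are the places where the reflective localization structure is genuinely used.
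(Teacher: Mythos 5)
Your proof is correct, but it takes a genuinely different route from the paper's. The paper disposes of the proposition by inspecting the proof of [HTT, Lemma 6.1.3.15] (which shows that left-exact localizations preserve local cartesian closedness) and observing that left-exactness is only invoked there to preserve a pullback square one of whose legs lies between local objects --- precisely what the locally cartesian hypothesis supplies. You instead build the right adjoint to $f^*:\C_{/S}\to\C_{/T}$ explicitly, as the restriction of $f_*:\P_{/T}\to\P_{/S}$, using criterion (3) of Proposition~\ref{loccartstars} to see that $f_*$ carries local objects to local objects, and then transporting the adjunction along the fully faithful slice inclusions. The two identifications you flag --- that $\C_{/S}$ sits inside $\P_{/S}$ as the full subcategory of maps with local total space, and that pullbacks of local objects along maps of local objects are computed in $\P$ --- are exactly right and are the standard facts about reflective localizations that make the restriction argument go through. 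Your version is self-contained within the paper (it leans only on Proposition~\ref{loccartstars}, already proved) and has the bonus of exhibiting the dependent product in $\C$ concretely as the ambient $f_*$; the paper's version is shorter but outsources the real work to the reader's inspection of Lurie's argument. Your presentability step is the standard fact about accessible localizations of presentable $\i$-categories and is unobjectionable.
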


\begin{proof}
  This follows by inspection of the proof of \cite[Lemma 6.1.3.15]{Lurie:HTT}:
  there it is proved that a left-exact localization preserves the property of
  being locally cartesian closed, but in fact the only place left-exactness is
  used is to preserve a certain pullback square where in fact one of the legs is
  between local objects.
\end{proof}

Recall that, for any presentable $\infty$-category $\C$ and any 
sufficiently large regular cardinal $\kappa$, the full subcategory 
$\C^\kappa\subset\C$ of $\kappa$-compact objects
induces a colimit-preserving functor
\[
\Pre(\C^\kappa)\too\C ,
\]
which is essentially surjective and admits a fully faithful right 
adjoint.\footnote{\cite[Theorem 5.5.1.1]{Lurie:HTT}}
We refer to this as the ``standard presentation'' (even though it 
involves a choice of cardinal).

\begin{proposition}\label{lcc=lcc}
If $\C$ is a presentable locally cartesian closed $\infty$-category then
its standard presentation $L: \Pre(\C^\kappa) \to \C$ is an accessible
\loccart\ localization.
\end{proposition}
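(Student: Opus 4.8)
The plan is to treat the two claims in turn. That $L$ is an accessible localization is essentially built into the standard presentation: by \cite[Theorem~5.5.1.1]{Lurie:HTT} the colimit-preserving functor $L:\Pre(\C^\kappa)\to\C$ admits a fully faithful right adjoint $G$, so $L$ is a reflective localization, and since the reflective subcategory is (equivalent to) the presentable $\i$-category $\C$, this localization is accessible (cf.\ \cite[\S5.5.1]{Lurie:HTT}). I record for later use that the local objects are exactly the presheaves $\Map_\C(-,x)|_{\C^\kappa}$ in the essential image of $G$, and that for $c\in\C^\kappa$ the representable $y(c)$ coincides with $G(c)$; in particular $y(c)$ is local and $Ly(c)\simeq c$.

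To prove that $L$ is \loccart\ I will verify the reformulation given in the definition of a \loccart\ localization: for local objects $S=G(s)$ and $T=G(t)$ and any diagram $T\to S\leftarrow X$ in $\Pre(\C^\kappa)$, the canonical comparison
$$\theta_X\colon L(T\times_S X)\too LT\times_{LS}LX\simeq T\times_S LX$$
is an equivalence. Viewing both sides as functors of $X\in\Pre(\C^\kappa)_{/S}$, each preserves colimits: the left-hand side because $T\times_S-$ preserves colimits in the $\i$-topos $\Pre(\C^\kappa)$ (colimits there are universal) and $L$ preserves colimits, and the right-hand side because $L$ preserves colimits and $T\times_S-$ preserves colimits in $\C$ (colimits in $\C$ are universal, this being the content of presentable local cartesian closure). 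Since $\Pre(\C^\kappa)_{/S}$ is generated under colimits by the representables $y(c)\to S$ with $c\in\C^\kappa$, it suffices to show that $\theta_{y(c)}$ is an equivalence.

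For the representable case I use that $G$, being a right adjoint, preserves all limits. Writing $c\to s$ for the map corresponding to $y(c)\to S$ under $\Map_{\Pre(\C^\kappa)}(y(c),G(s))\simeq\Map_\C(c,s)$, and using $y(c)\simeq G(c)$, preservation of pullbacks by $G$ gives $T\times_S y(c)\simeq G(t)\times_{G(s)}G(c)\simeq G(t\times_s c)$ in $\Pre(\C^\kappa)$. Applying $L$ and invoking $LG\simeq\id_\C$ identifies $L(T\times_S y(c))$ with $t\times_s c=T\times_S Ly(c)$, compatibly with $\theta_{y(c)}$; hence $\theta_{y(c)}$ is an equivalence. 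Combined with the colimit reduction of the previous paragraph, this shows $\theta_X$ is an equivalence for every $X$, so $L$ is \loccart.

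The formal content is light, and the one point demanding care is the reduction to representables: one must check that \emph{both} functors $X\mapsto L(T\times_S X)$ and $X\mapsto T\times_S LX$ preserve colimits, and here the essential input on the target side is precisely universality of colimits in $\C$, i.e.\ the hypothesis that $\C$ is locally cartesian closed. (Alternatively one could verify condition~(2) of Proposition~\ref{loccartstars}, that $f^*$ preserves local equivalences for $f$ between local objects, but this rests on the same basechange computation, so establishing the equivalence $\theta_X$ directly is the most economical route.)
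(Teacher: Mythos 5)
Your proof is correct, but it takes a different route from the paper's. The paper verifies criterion (2) of Proposition~\ref{loccartstars}: it reduces (after enlarging $\kappa$ so that $S$ and $T$ become representable, via a commutative diagram involving $f_*$) to checking that $f^*$ carries each generating local equivalence $\colim_i\Map(-,x_i)\to\Map(-,\colim_i x_i)$ to another generating local equivalence, which follows because both $f^*:\C^\kappa_{/S}\to\C^\kappa_{/T}$ and $f^*:\Pre(\C^\kappa)_{/S}\to\Pre(\C^\kappa)_{/T}$ preserve colimits. You instead verify the pullback-comparison formulation directly, fixing local $S=G(s)$, $T=G(t)$ and running a colimit induction in the variable $X$: both sides of $\theta_X$ preserve colimits (universality of colimits in the presheaf topos on one side, in $\C$ on the other), so it suffices to treat $X=y(c)\simeq G(c)$, where the claim follows from $G$ preserving pullbacks and $LG\simeq\id_\C$. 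The essential input is the same in both arguments --- universality of colimits in $\C$ --- but your version buys a cleaner treatment of the base: $s$ and $t$ need not be $\kappa$-compact, so you avoid the cardinal-enlargement step entirely, at the small cost of the (routine, and worth stating) naturality check that the equivalence $L(T\times_S y(c))\simeq t\times_s c$ obtained from $G$ preserving pullbacks really is the canonical comparison map $\theta_{y(c)}$. The paper's phrasing in terms of generating local equivalences has the side benefit of matching the ``generators and relations'' description used later when constructing the model category in Theorem~\ref{lcc=>type}, but as a proof of this proposition your argument is complete.
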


\begin{proof}
  The statement asserts that the localization is locally cartesian 
  whenever $\kappa$ is large enough for the localization functor
  $L:\Pre(\C^\kappa)\too\C\subset\Pre(\C^\kappa)$ to exist.
  We shall apply criterion (2) of Proposition~\ref{loccartstars}.
  Let $F:T\to S$ be a map of local objects and
  let $\kappa'\geq\kappa$ be a cardinal such that both $S$ and $T$ are in $\C^{\kappa'}$.
  By the commutativity of the diagram
  \[
  \xymatrix{
  \C_{/T}\ar[r] & \Pre(\C^\kappa)_{/T}\ar[r]\ar[d]^{F_*} & \Pre(\C^{\kappa'})_{/T}\ar[d]^{F_*}\\
  \C_{/S}\ar[r] & \Pre(\C^\kappa)_{/S}\ar[r] & \Pre(\C^{\kappa'})_{/S},}
  \]
  and since the horizontal functors are fully faithful,
  we may suppose without loss of generality that $\kappa'=\kappa$, so that
  $F:T \to S$ is a map of representable presheaves and therefore is of the form 
  $F\simeq\Map_{\C^\kappa}(-,f)$ for some $f:s\to t$ in $\C^\kappa$.
  We need to show that $F^*$ applied to 
  any generating local equivalence
  \[
  \colim_i\Map_{\C^\kappa_{/S}}(-,x_i)\too\Map_{\C^\kappa_{/S}}(-,\colim_i x_i), 
  \]
  is again a local equivalence.
  But this is clear: in the diagram
  \[
  \xymatrix{
  F^*\colim_i \Map_{\C^\kappa_{/S}}(-,x_i)  \ar[d] \ar[r] &
    \colim_i F^*\Map_{\C^\kappa_{/S}}(-,x_i) \ar[r]^-\sim &
    \colim_i\Map_{\C^\kappa_{/T}}(-,f^*x_i) \ar[d] \\
  F^*\Map_{\C^\kappa_{/S}}(-,\colim_i x_i) \ar[r]^-\sim &
    \Map_{\C^\kappa_{/T}}(-,f^*\colim_i x_i) \ar[r] & 
    \Map_{\C^\kappa_{/T}}(-,\colim_i f^*x_i)
  }
  \]
  the left vertical map is the map in question, and the right vertical map is
  itself a generating local equivalence.
  The top left horizontal map is an equivalence because $F^*$ is a left adjoint,
  since a presheaf category is locally cartesian closed.
  The bottom right horizontal map is an equivalence because $\C$ and hence $\C^\kappa$
  is locally cartesian closed, 
  hence $f^*:\C^\kappa_{/S}\to\C^\kappa_{/T}$ preserves colimits.
  The two maps already displayed as equivalences are so because $F$ is 
  represented by $f$.
  Finally, $L$ is accessible as it is a left adjoint functor between accessible 
  $\infty$-categories.\footnote{\cite[Proposition 5.4.7.7]{Lurie:HTT}}
\end{proof}

\begin{corollary}\label{lccc=lccc}
  An $\infty$-category $\C$ is presentable and locally cartesian closed if and only
  if there exists a small $\infty$-category $\D$ such that $\C$ is an accessible
  locally cartesian localization of $\Pre(\D)$.
  \qed
\end{corollary}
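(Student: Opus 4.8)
The plan is to read off each direction of the equivalence from the two propositions immediately preceding, so the proof should be short.

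For the direction that a presentable locally cartesian closed $\C$ arises in this way, I would simply invoke Proposition~\ref{lcc=lcc}. Choosing a regular cardinal $\kappa$ large enough that the standard presentation $L:\Pre(\C^\kappa)\to\C$ exists, I set $\D=\C^\kappa$; this is (essentially) small because $\C$ is presentable, hence $\kappa$-compactly generated with an essentially small category of $\kappa$-compact objects. Proposition~\ref{lcc=lcc} then asserts exactly that $L$ is an accessible \loccart\ localization, so $\C$ is exhibited as an accessible \loccart\ localization of $\Pre(\D)$, as required.

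For the converse I would start from a small $\i$-category $\D$ with $\C$ an accessible \loccart\ localization of $\Pre(\D)$, and first establish that $\P:=\Pre(\D)$ is itself a presentable locally cartesian closed $\i$-category. Presentability is standard for presheaf $\i$-categories on small $\i$-categories. Local cartesian closure I would deduce from the criterion recalled at the start of the section: in the presentable setting, being locally cartesian closed is equivalent to colimits being universal, and colimits in $\Pre(\D)$, being computed pointwise in $\spc$, are universal (equivalently, $\Pre(\D)$ is an $\i$-topos). Once $\P$ is known to be presentable and locally cartesian closed, the Proposition stating that an accessible \loccart\ localization of such a $\P$ is again presentable and locally cartesian closed applies verbatim with this $\P$ and the given $L$, yielding the conclusion for $\C$.

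I do not anticipate a genuine obstacle, since both implications are direct applications of already-established statements. The only point deserving a line of justification is the local cartesian closure of $\Pre(\D)$ in the converse, which reduces to the universality of colimits in presheaf $\i$-categories.
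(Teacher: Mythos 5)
Your proof is correct and follows exactly the route the paper intends: the corollary is stated without proof as an immediate consequence of Proposition~\ref{lcc=lcc} (for the forward direction, with $\D=\C^\kappa$) and the preceding proposition that accessible locally cartesian localizations preserve presentability and local cartesian closure (for the converse, after noting that $\Pre(\D)$ is a presentable locally cartesian closed $\i$-category). Your extra remark justifying local cartesian closure of $\Pre(\D)$ via universality of colimits is the right one-line justification for the only step not literally contained in the two propositions.
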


Recall (from [HTT 5.2.5.1]) that if
$L:\X\to\C$ is left adjoint to $G:\C\to\X$ with 
unit $\eta: \id \Rightarrow GL$ and counit $\varepsilon:LG \Rightarrow \id$,
and if $\X$ has pullbacks, then for
each object $T\in \X$ the induced functor $L_T: \X_{/T} \to \C_{/LT}$ has again a right
adjoint $(\eta_T)^*\circ G_{LT} : \C_{/LT}\to \X_{/GLT}\to \X_{/T}$,
with counit  (at the object $Z \to LT$)  given by
\begin{equation}\label{counit}
\xymatrix{
L(T \times_{GLT} GZ) \ar[r]^-{L(\mathrm{pr}_2)} & LGZ \ar[r]^-{\varepsilon_Z}&Z .
}  
\end{equation}

\begin{lemma}\label{lem:sliceadjoint}
  Let $\X$ be a locally cartesian closed $\infty$-category, and
  let $L:\X\to\C \subset \X$ be a \loccart\ localization, with right adjoint 
  inclusion functor $G:\C\to\X$ and unit $\eta: \id \Rightarrow GL$.
  Then for each object $T\in\X$, the functor
  $(\eta_T)^*\circ G_{LT} : \C_{/LT}\to \X_{/GLT}\to \X_{/T}$ (right 
  adjoint to $L_T:\X_{/T}\to\C_{/LT}$)
  is fully faithful.
\end{lemma}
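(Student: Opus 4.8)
The plan is to realize the two functors of the statement as composites of adjoint pairs, so that the adjunction itself is formal, and then to prove full faithfulness of the right adjoint by identifying its counit and invoking the \loccart\ hypothesis through Proposition~\ref{loccartstars}. Since $\X$ is \loccart\ it has pullbacks, so post-composition $\eta_{T!}:\X_{/T}\to\X_{/GLT}$ is left adjoint to the pullback functor $\eta_T^*:\X_{/GLT}\to\X_{/T}$; and by construction the slice localization $L_{LT}:\X_{/GLT}\to\C_{/LT}$ is left adjoint to the slice inclusion $G_{LT}:\C_{/LT}\to\X_{/GLT}$. As composites of left (resp.\ right) adjoints are again left (resp.\ right) adjoint, the composite $\X_{/T}\xrightarrow{\eta_{T!}}\X_{/GLT}\xrightarrow{L_{LT}}\C_{/LT}$ is left adjoint to $\C_{/LT}\xrightarrow{G_{LT}}\X_{/GLT}\xrightarrow{\eta_T^*}\X_{/T}$, which is the adjunction asserted. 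It then remains only to show that the right adjoint $\eta_T^*\circ G_{LT}$ is fully faithful, equivalently that the counit of this composite adjunction is an equivalence.

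Next I would reduce full faithfulness to a single base-change statement. The counit of the composite factors as $L_{LT}$ applied to the counit of $\eta_{T!}\dashv\eta_T^*$ (whiskered so as to be evaluated on objects in the image of $G_{LT}$), followed by the counit of $L_{LT}\dashv G_{LT}$; the latter is already an equivalence since $G_{LT}$, a reflective inclusion, is fully faithful. Writing a typical object of $\C_{/LT}$ as $p\colon A\to LT$ with $A$ local, its image under $\eta_T^*\circ G_{LT}$ is the projection $T\times_{LT}A\to T$, and the corresponding component of the counit of $\eta_{T!}\dashv\eta_T^*$ is the other projection $\pi\colon T\times_{LT}A\to A$ over $LT$. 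Thus it suffices to prove that $L(\pi)$ is an equivalence.

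The key observation, and the one step with real content, is that $\pi$ is the base change of the unit $\eta_T\colon T\to LT$ along $p\colon A\to LT$. Here $p$ is a morphism between the local objects $A$ and $LT$, while $\eta_T$ is a local equivalence, since $L\eta_T$ is an equivalence ($\eta$ being the unit of the localization and $G$ fully faithful). Criterion~(2) of Proposition~\ref{loccartstars} then applies verbatim: pullback along a morphism of local objects preserves local equivalences, so $\pi=p^*(\eta_T)$ is a local equivalence, i.e.\ $L(\pi)$ is an equivalence. This makes the composite counit an equivalence, whence $\eta_T^*\circ G_{LT}$ is fully faithful. The only genuine obstacle is recognizing the counit component as the pullback of the localization unit along a map of local objects; once this identification is made, the \loccart\ property does all the work and everything else is formal manipulation of adjunctions.
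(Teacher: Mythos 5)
Your proof is correct and follows essentially the same route as the paper: the adjunction is obtained formally as a composite of adjoint pairs, and full faithfulness is reduced to showing that $L$ inverts the projection $T\times_{GLT}Z\to Z$ for $Z$ local. The only cosmetic difference is that you recognize this projection as $p^*(\eta_T)$ and invoke criterion~(2) of Proposition~\ref{loccartstars}, whereas the paper applies the base-change formula $L(T\times_{GLT}Z)\simeq LT\times_{LT}LZ$ directly; these are interchangeable, since that implication of the proposition is exactly the definition of \loccart\ unwound.
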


\begin{proof}
  We need to check that the counit component of diagram \eqref{counit} above is an equivalence.
  But since $L$ is \loccart, the first map is just the projection 
  $L(T \times_{GLT} GZ) \simeq LT \times_{LT} LGZ \to LGZ$, which is clearly an 
  equivalence, and the second map, $\varepsilon_Z$, is an equivalence since $G$ 
  is fully faithful.
\end{proof}

Recall from [HTT 5.5.6.18] that if $\C$ is a presentable 
$\infty$-category then the subcategory $\tau_{\leq k}\C \to \C$ of 
$k$-truncated objects has an accessible left adjoint, i.e.~an accessible 
localization functor $L : \C \to \tau_{\leq k}\C$.
\begin{lemma}\label{taulcc}
  For $\C$ a presentable locally cartesian closed $\infty$-category,
  the truncation functor $L : \C \to \tau_{\leq k}\C$ is 
  (accessible and) \loccart.
\end{lemma}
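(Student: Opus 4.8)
The plan is to handle the two assertions separately: accessibility is essentially a citation, while local cartesianness is the substance, and I would derive it from criterion (3) of Proposition~\ref{loccartstars}. For accessibility, the full subcategory $\tau_{\leq k}\C\subseteq\C$ of $k$-truncated objects is a reflective localization whose reflector is the truncation functor, and for presentable $\C$ this localization is accessible; I would invoke the corresponding result of Lurie, e.g.\ \cite[Proposition 5.5.6.18]{Lurie:HTT}. So the real work is to show that $L=\tau_{\leq k}$ is \loccart, for which it suffices to check that for every map $f:T\to S$ between $k$-truncated objects the pushforward $f_*:\C_{/T}\to\C_{/S}$ carries $L_T$-local objects to $L_S$-local objects.

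The first step is to reinterpret the local objects intrinsically. By construction, the $L_S$-local objects of $\C_{/S}$ are precisely the maps $Z\to S$ whose total space $Z$ is $k$-truncated in $\C$. I claim that, because the base $S$ is itself $k$-truncated, these coincide with the objects that are $k$-truncated in the slice $\i$-category $\C_{/S}$, i.e.\ the $k$-truncated morphisms $Z\to S$. Considering the composite $Z\to S\to 1$, closure of $k$-truncated morphisms under composition yields that a $k$-truncated morphism $Z\to S$ has $k$-truncated domain, while the cancellation property (if $g$ and $gf$ are $k$-truncated then so is $f$) yields the converse; both are standard properties of truncated morphisms, cf.\ \cite[\S 5.5.6]{Lurie:HTT}. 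The identical statement holds over $T$.

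With this reformulation in hand the claim becomes purely formal. Local cartesian closedness is exactly the hypothesis that $f_*$ is right adjoint to $f^*$, and right adjoints preserve $k$-truncated objects: for any $W$ in $\C_{/S}$ one has $\Map_{\C_{/S}}(W,f_*Z)\simeq\Map_{\C_{/T}}(f^*W,Z)$, which is $k$-truncated whenever $Z$ is $k$-truncated in $\C_{/T}$, so $f_*Z$ is $k$-truncated in $\C_{/S}$. Translating back through the identification of the previous paragraph, $f_*$ sends $L_T$-local objects to $L_S$-local objects, which is criterion (3), and hence $L$ is \loccart.

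The one point that genuinely uses the hypothesis — and which I expect to be the only place care is needed — is the identification of $L_S$-local objects with slice-truncated objects: it is valid precisely because $S$ is $k$-truncated, which is exactly why Proposition~\ref{loccartstars} restricts to maps between local objects. Everything else is the formal fact that a right adjoint preserves truncatedness, so no topos-theoretic input (such as base-change stability of $n$-connected maps) is needed, and the argument runs in an arbitrary presentable locally cartesian closed $\i$-category.
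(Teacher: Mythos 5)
Your proof is correct, and it takes a genuinely different route from the paper's. The paper verifies condition (1) of Proposition~\ref{loccartstars} directly: it identifies $(\tau_{\leq k}\C)_{/S}$ with $\tau_{\leq k}(\C_{/S})$ (the same identification you make, via [HTT 5.5.6.14]) and then invokes [HTT 5.5.6.28], which says that truncation commutes with any functor preserving colimits and finite limits --- so there the role of local cartesian closedness is to guarantee that $f^*$ preserves colimits. You instead verify criterion (3), pushing all the work onto the right adjoints: after the same reinterpretation of $L_S$-local objects as $k$-truncated objects of the slice (valid precisely because $S$ is $k$-truncated, as you correctly flag), the only remaining input is the formally trivial fact that a right adjoint preserves $k$-truncated objects, with local cartesian closedness entering only through the existence of $f_*$. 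Your version is arguably the more elementary, since it replaces the mildly nontrivial commutation statement [HTT 5.5.6.28] by a one-line mapping-space argument; the paper's version has the merit of exhibiting explicitly the commuting square demanded by the definition of \loccart\ localization rather than routing through the equivalence of criteria. Both arguments hinge on the same crucial point --- that $L_S$-locality coincides with slice-truncatedness only over a truncated base --- so the proposal is a sound alternative proof.
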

\begin{proof}
  To establish that $L$ is \loccart,
  we need to check, for any map $f:T\to S$ between $k$-truncated objects,
  that the diagram
  $$\xymatrix{
\C_{/S} \ar[r]^-{L_S} \ar[d]_{f^*} &(\tau_{\leq k}\C)_{/S} \ar[d]^{f^*} \\
\C_{/T} \ar[r]_-{L_T} & (\tau_{\leq k}\C)_{/T}
}$$
commutes.  But we have natural equivalences $(\tau_{\leq k}\C)_{/S} \simeq 
\tau_{\leq k}(\C_{/S})$ (e.g.~by [HTT 5.5.6.14]),
and the equivalent diagram
  $$\xymatrix{
\C_{/S} \ar[r]^-{\tau_{\leq k}} \ar[d]_{f^*} &\tau_{\leq k}(\C_{/S}) 
\ar[d]^{f^*} \\
\C_{/T} \ar[r]_-{\tau_{\leq k}} & \tau_{\leq k}(\C_{/T})  ,
}$$
commutes by [HTT 5.5.6.28] since $f^*$ preserves colimits and 
finite limits, $\C$ being locally cartesian closed.
\end{proof}
  
\begin{blanko}{Monomorphisms.}\label{lemma:mono123}
  Recall that a map $f: X \to Y$ in an $\infty$-category is said to be a {\em
  monomorphism} if it is $(-1)$-truncated (i.e.~for any object $T$,
  the induced map of $\infty$-groupoids $f_!:\Map(T,X) \to \Map(T,Y)$ is $(-1)$-truncated).
  This is equivalent to the condition that the diagonal
  $\delta_f: X \to X\times_Y X$ is an equivalence.  It is also easy to see that
  $f:X \to Y$ is mono if and only if the diagram
    $$\xymatrix{
    X \ar[r]^f \ar[d]_{\delta_X} & Y \ar[d]^{\delta_Y} \\
    X \times X \ar[r]_{f\times f} & Y \times Y
    }$$
    is a pullback.
\end{blanko}

\section{Objects of equivalences}
\label{sec:Eq}

{Given any pair of objects $X$ and $Y$ of an $\infty$-category $\C$,
there is the full subspace $\Eq(X,Y)_{\C}\subset\Map_{\C}(X,Y)$ consisting of the equivalences.  We are concerned in this section with a relative version 
of this, namely a right fibration $\sMap(X,Y) \to \C$ and a subfibration
$\sEq(X,Y) \subset \sMap(X,Y)\to\C$, 
which under certain assumptions will be shown to be representable by objects of
$\C$ denoted $\intEq(X,Y) \subset \intMap(X,Y)$.  The difference in typography 
for these spaces, fibrations and internal objects should be noted in the 
following.}

Given any pair of objects $X$ and $Y$ of an $\infty$-category $\C$ with 
{finite limits,}
there is a canonical right fibration
$$
\sMap(X,Y) \too \C 
$$
whose fiber over $T$ is $\Map_{/T}(X\times T, Y\times T)\simeq \Map(X\times 
T,Y)$.
It is defined as the restriction
$$\xymatrix{
\sMap(X,Y) \drpullback \ar[r] \ar[d] 
\ar[d] & \C_{/Y} \ar[d] \\
\C \ar[r]_{X \times -} & 
\C 
}$$
of the projection $\C_{/Y}\to\C$ (a right fibration since $\C$ has pullbacks)
along the ``multiplication by $X$'' map $\C\to\C$.  If this right fibration is
representable then the representing object, denoted $\intMap(X,Y)$, is the
internal mapping object; that is,
$\Map(T,{\intMap(X,Y)})\simeq\Map(X\times T, Y)$.

\begin{proposition}
  An $\infty$-category $\C$ is cartesian closed if and only if, for every pair of objects $X$ and $Y$
  of $\C$, the right fibration $\sMap(X,Y)\to\C$ is representable.
  \end{proposition}

\begin{proof}
  To say that $\C$ is cartesian closed is precisely to say that, for each object $X$, the
  functor $X \times -: \C\to\C$ is a left adjoint.
  Since pullback along left adjoints preserves representable right fibrations and $\C_{/Y}\to\C$ is a representable right fibration, it follows that $\sMap(X,Y)\to\C$ is a representable right fibration.
  
  Conversely, to show that the functor $X \times -:\C\to\C$ is a left adjoint, we must show
  that the associated cocartesian fibration (obtained by unstraightening the functor $\Delta^1\to\Cat_\infty$ corresponding to $X\times -:\C\to\C$)
  $$
  \xymatrix{
  \C\drpullback\ar[r]\ar[d] & \M \ar[d]^q & \C\dlpullback\ar[l]\ar[d]\\
  \{0\}\ar[r] & \Delta^1 & \{1\}\ar[l]}
  $$
  is also a cartesian fibration.  For this we 
  need to
  provide a cartesian lift of the unique non-identity arrow $\earrow:0\to 1$ 
  in $\Delta^1$ to an arrow of $\M$ with target an arbitrary object $Y$ over $1$.  The source of this arrow is
  going to be $\intMap(X,Y)$, and to provide the arrow itself we can exploit
  the fact that we already know that $q$ is a cocartesian fibration: it is hence enough
  to provide an arrow $X \times \intMap(X,Y)  \to Y$.  For this we take
  the map corresponding to the identity map of $T= \intMap(X,Y)$ under
  the equivalence 
  $$
  \Map( X\times T ,Y) \simeq \Map(T, \intMap(X,Y))
  $$
  expressing that $\intMap(X,Y)$ represents the right fibration.
  Finally we need to show that this arrow is cartesian.  By Lemma~\ref{cart} below,
  we have to show that evaluation
  $$
  \xymatrix{\Map(X,Y) \ar[r]^-{\text{cocart}} & X \times \intMap(X,Y)\ar[r]^-{\text{ev}} & Y}
  $$
  is a terminal object in the pullback
  $$\xymatrix{
   (\M\times_{\Delta^1}\{0\})_{/Y}\ar[r] \ar[d] \drpullback & \M\times_{\Delta^1}\{0\} \ar[d] \\
   \M_{/Y} \ar[r] & \M .
   }$$
  Choose $T\in (\M\times_{\Delta^1}\{0\})_{/Y}$; note that it has to belong to 
  the fiber over $0$.  Then we have a pullback square
  $$\xymatrix{
  \Map(T, \intMap(X,Y)) \drpullback \ar[r] \ar[d] & 
  \Map_{\C}(T, \intMap(X,Y))  \ar[d] \\
  \Map_{/Y}(T, \intMap(X,Y)) \ar[r] & \Map_{\M}(T, 
  \intMap(X,Y))
  }$$
  in which clearly the right vertical arrow is an equivalence.  The bottom left 
  space is calculated as the fiber of
  $$
  \Map_{\M}(T, \intMap(X,Y)) \to \Map_{\M}(T,Y),$$
  which is equivalent to the identity of
  $\Map(X\times T, Y)$, because $\Map_{\M}(T,Y)$ is equivalent to 
  $\Map(X\times T, Y)$ in the fiber over $1$.
\end{proof}

\begin{lemma}\label{cart}
  An inner fibration $p: \M \to \Delta^1$ is a cartesian fibration if and
  only if for every $t\in \M\times_{\Delta^1}\{1\}$ the $\infty$-category 
  $\M_{/t}\times_{\Delta^1}\{0\}$ (of lifts of the 
  nondegenerate edge $\earrow$ in $\Delta^1$ to $\M$ with target $t$), defined as the
  pullback
  $$\xymatrix{
  \M_{/t}\times_{\Delta^1}\{0\}\drpullback \ar[r] \ar[d] & \M\times_{\Delta^1}\{0\}\ar[d]\\
  \M_{/t} \ar[r] & \M
  },$$
  has a terminal object.
\end{lemma}
\begin{proof}
Let $\varepsilon:s\to t$ be an arrow of $\M$ such that $p(\varepsilon)=\earrow$.
Note that the projection
\[
\M_{/t/\varepsilon}\times_{\Delta^1}\{0\}\to\M_{/t}\times_{\Delta^1}\{0\}
\]
is an equivalence if and only if the composite
\[
\M_{/\varepsilon}\simeq\M_{/t/\varepsilon}\cong\M_{/t/\varepsilon}\times_{\Delta^1}\{0\}\to\M_{/t}\times_{\Delta^1}\{0\}
\]
is an equivalence (the isomorphism in the middle is a result of the fact that the projection $\M_{/t/\varepsilon}\to\Delta^1$ factors through $\{0\}\to\Delta^1$).
Moreover, we have isomorphisms $\Delta^1_{/1}\cong\Delta^1$, $\Delta^1_{/\earrow}\cong\{0\}$, and
\[
(\M_{/t}\times_{\Delta^1}\{0\})_{/\varepsilon}\cong\M_{/t/\varepsilon}\times_{\Delta^1}\{0\},
\]
which shows that
\[
(\M_{/t}\times_{\Delta^1}\{0\})_{/\varepsilon}\to\M_{/t}\times_{\Delta^1}\{0\}
\]
is an equivalence if and only if
\[
\M_{/\varepsilon}\to\M_{/t}\times_{\Delta^1_{/1}}\Delta^1_{/\earrow}
\]
is an equivalence.
The result now follows from the fact that $p$ is cocartesian if any $t\in\M$ with $p(t)=1$ is the target of a $p$-cartesian arrow $\varepsilon$.
\end{proof}

\begin{corollary}\label{maclanelemma}
  A functor $F:\C\to\D$ admits a right adjoint 
  if and only if, for every object $t\in\D$, the $\infty$-category 
  $\C_{/t}$, defined as the pullback
  \[
  \xymatrix{
  \C_{/t}\ar[r]\ar[d]\drpullback & \C\ar[d]^F\\
  \D_{/t}\ar[r] & \D,}
  \]
  has a terminal object.
\end{corollary}

\begin{proof}
Form the cocartesian fibration $p:\M\to\Delta^1$ associated to the left adjoint $F:\C\to D$; we must show that $p$ is also cartesian.
This follows from the previous lemma upon observing that $\C_{/t}$ is naturally identified with the pullback $\M_{/t}\times_{\Delta^1}\{0\}$.
\end{proof}

\begin{remark}
  We will refer to a right fibration corresponding 
  to a limit-preserving functor 
  $\C^\op\to\Gpd_\infty$ as a {\em sheaf},
  at least in the presentable situation.
  This is because the Yoneda embedding
  \[
  \C\too\Fun(\C^{\op},\Gpd_\infty)
  \]
  factors through the subcategory
  \[
  \Fun^\mathrm{lim}(\C^{\op},\Gpd_\infty)\subset\Fun(\C^{\op},\Gpd_\infty)
  \]
  of limit-preserving functors, and 
  the resulting fully faithful inclusion 
  $\C\to\Fun^\mathrm{lim}(\C^{\op},\Gpd_\infty)$ is an equivalence when $\C$ is presentable
  \cite[Proposition 5.5.2.2]{Lurie:HTT}.
  In other words, we are thinking of colimit diagrams $\colim_\alpha U_\alpha\simeq T$
  in $\C$ as specifying decompositions of $T$ into ``local'' pieces $U_\alpha$; if $\C$ is an $\infty$-topos, this is literally the canonical topology on $\C$.
\end{remark}

\begin{corollary}
  For any locally cartesian closed $\infty$-category $\C$ and for any pair of
  objects $X$ and $Y$ of $\C$, the right fibration $\sMap(X,Y)\to\C$ is a sheaf.
  \qed
\end{corollary}


\begin{blanko}{Spaces of equivalences.}
  Let $\C$ be an $\infty$-category and let $X$ and $Y$ be objects of $\C$. Define
  $$
  \Eq_{\C}(X,Y) := \Map_{\C^{\eq}}(X,Y) ,
  $$
  the space of maps from $X$ to $Y$ in the maximal subgroupoid $\C^{\eq}$ of $\C$.
  We have
  \[\pi_0 \Eq_\C(X,Y) \cong\Map_{\Ho(\C^{\eq})}(X,Y)\cong\operatorname{Iso}_{\Ho(\C)}(X,Y).
  \]
  It follows that
    \[
  \xymatrix{
  \Eq_\C(X,Y)\ar[r]\ar[d] \drpullback& \Map_\C(X,Y)\ar[d]\\
  \pi_0 \Eq_\C(X,Y)\ar[r] & \pi_0 \Map_\C(X,Y)
  }
    \]
    is always a pullback square, and that the inclusion
    $\Eq_\C(X,Y)\to\Map_\C(X,Y)$ is a monomorphism.

  We will often suppress the subscript $\C$ from the notation and simply write
  $\Eq(X,Y)$ for $\Eq_\C(X,Y)$.  The following lemma is immediate from the
  characterization of equivalences in $\C$ as those maps which become isomorphisms
  in $\Ho(\C)$.
\end{blanko}
\begin{lemma}
  Let $\C$ be an $\infty$-category and let $X$ and $Y$ be objects of $\C$.  Then a
  homotopy class of map $f\in\pi_0\Map(X,Y)$ lies in the subset $\pi_0\Eq(X,Y)$
  of homotopy classes of equivalences if and only if there exist homotopy class
  of map $g\in\pi_0\Map(Y,X)$ such that $g f\in\pi_0\Aut(X)\subset\pi_0\End(X)$
  and $f g\in\pi_0\Aut(Y)\subset\pi_0\End(Y)$.
  \qed
\end{lemma}

We now define the right fibration $\sEq(X,Y)$ of equivalences as the
full subfibration of $\sMap(X,Y) \to \C$ whose
fiber over $T$ is the subspace $\Eq_{/T}(X \times T,  Y \times T)\subset
\Map_{/T}(X \times T,  Y \times T)$.  
This is a subfibration since clearly basechange of an equivalence
is an equivalence.  Note that $\sEq(X,Y) \to \C$ has small
fibers since it is a subfibration of a right fibration $\sMap(X,Y) \to \C$ with small fibers, as the mapping spaces in $\C$ are small.

\begin{lemma}
There is a canonical inversion map $i:\Eq(X,Y)\to\Eq(Y,X)$, defined up 
to contractible ambiguity.
Moreover, this extends to an inversion on right fibrations
\[
\xymatrix{\sEq(X,Y)\ar[rr]^i\ar[rd] & & \sEq(Y,X)\ar[ld]\\
& \C &}
\]
such that for each object $T$ of $\C$, the diagram
\[
\xymatrix{
\Eq_{/T}(X\times T,Y\times T)\ar[r]\ar[d]^{(\id,i)} & \ast\ar[d]^{\id_{X\times T}}\\
\Eq_{/T}(X\times T,Y\times T)\times\Eq_{/T}(Y\times T,X\times T)\ar[r]\ar[d] & \Eq_{/T}(X\times T,X\times T)\ar[d]\\
\Map_{/T}(X\times T,Y\times T)\times\Map_{/T}(Y\times T,X\times T)\ar[r] & \Map_{/T}(X\times T,X\times T)}
\]
commutes.
\end{lemma}

\begin{proof}
  More generally, $(-)^{\op}$, regarded as an endoequivalence of the
  $\infty$-category of $\infty$-groupoids which squares to the identity, comes
  equipped with a natural transformation $i:(-)^{\op}\to\id$; moreover, for any
  $\infty$-groupoid $\G$ and any map $f:X\to Y$ in $\G$, the resulting involution
  $i_\G:\G^{\op}\to\G$ exhibits $i_\G(f)$ as an inverse of $f$.  See
  \cite[Section~4.2]{Gepner-Haugseng:1312} and \cite{DK84} for details.  Taking $\G=(\C_{/T})^{\eq}$, as $T$
  ranges through the objects of $\C$, we obtain a morphism of right fibrations
  $\sEq(X,Y)\to\sEq(Y,X)$ over $\C$ as claimed.
\end{proof}

\begin{proposition}\label{prop:eqrep}
  If $\C$ is cartesian closed, then 
  the right fibration $\sEq(X,Y) \to \C$ is 
  a sheaf. If $\C$ is also presentable, then the right fibration $\sEq(X,Y)\to\C$ is representable.
\end{proposition}
When the right fibration $\sEq(X,Y)\to\C$ is representable, we will denote by $\intEq(X,Y)$ the representing object.
Note that $\intEq(X,Y)$ is necessarily a subobject of $\intMap(X,Y)$, provided of course both exist.

\begin{proof}
  We already know that $\sEq(X,Y)\subset\sMap(X,Y)$ is a subpresheaf of a 
  sheaf, so it is 
  automatically $(-1)$-separated.  This is to say that given a colimit diagram $\colim_\alpha U_\alpha 
  \isopil T$ in $\C$, the resulting map
  $$
  \Eq_{/T}(X\times T,Y\times T)
  \longrightarrow 
  \lim_\alpha \Eq_{/U_\alpha}(X\times U_\alpha,Y\times U_\alpha)
  $$
  a monomorphism.  It remains to show that it is
  surjective on $\pi_0$.
  
  The involution $i:\sEq(X,Y)\to\sEq(Y,X)$ induces a map
  \[
  \lim_\alpha\Eq_{/U_\alpha}(X\times U_\alpha,Y\times U_\alpha)
  \to
  \lim_\alpha\Eq_{/U_\alpha}(Y\times U_\alpha,X\times U_\alpha)
  \]
  making the diagram
  \[
  \xymatrix{
  \lim_\alpha\Eq_{/U_\alpha}(X\times U_\alpha,Y\times U_\alpha)\ar[r]\ar[d]^{(\id,i)} & \ast\ar[d]^{\id_{X\times U_\alpha}}\\
 \lim_\alpha\Eq_{/U_\alpha}(X\times U_\alpha,Y\times U_\alpha)\times\lim_\alpha\Eq_{/U_\alpha}(Y\times U_\alpha,X\times U_\alpha)\ar[r]\ar[d] & \lim_\alpha\Eq_{/U_\alpha}(X\times U_\alpha,X\times U_\alpha)\ar[d]\\
  \lim_\alpha\Map_{/U_\alpha}(X\times U_\alpha,Y\times U_\alpha)\times\lim_\alpha\Map_{/U_\alpha}(Y\times U_\alpha,X\times U_\alpha)\ar[r]\ar[d]^{\simeq} & \lim_\alpha\Map_{/U_\alpha}(X\times U_\alpha,X\times U_\alpha)\ar[d]^{\simeq}\\
   \Map(X\times T,Y\times T)\times\Map(Y\times T, X\times T)\ar[r] & \Map_{/T}(X\times T,X\times T)}
  \]
  commute.
  The analogous diagram involving the other composition
  \[
  (i,\id): \lim_\alpha\Eq_{/U_\alpha}(X\times U_\alpha,Y\times U_\alpha)
  \too
  \lim_\alpha\Eq_{/U_\alpha}(Y\times U_\alpha,X\times U_\alpha)\times\lim_\alpha\Eq_{/U_\alpha}(X\times U_\alpha,Y\times U_\alpha)
  \]
  also commutes, so any $\tilde{f}\in\pi_0\lim_\alpha\Eq_{/U_\alpha}(X\times U_\alpha, Y\times U_\alpha)$ gives elements $f\in\pi_0\Map(X\times T,Y\times T)$ and $g\in\pi_0\Map(Y\times T, X\times T)$ such that $gf=\id_{X\times T}\in\pi_0\Map_{/T}(X\times T,X\times T)$ and $fg=\id_{Y\times T}\in\pi_0\Map_{/T}(Y\times T,Y\times T)$.
  Hence $f\in\pi_0\Eq_{/T}(X\times T,Y\times T)$, completing the proof.
\end{proof}

The following local version is immediate.

\begin{theorem}\label{thm:eqrep}
  Let $S$ be an object in a locally cartesian closed
  $\infty$-category $\C$.  Then for each pair of objects $X \to S$ and $Y \to S$ of
  $\C_{/S}$, the right fibration $\sEq_{/S}(X,Y) \to \C_{/S}$ is a sheaf. If in addition $\C$ is presentable then the right fibration $\sEq_{/S}(X,Y)\to\C_{/S}$ is representable.
  \qed
\end{theorem}
As before, we write $\intEq_{/S}(X,Y)\to S$ for the representing object in $\C_{/S}$, when it exists; it is a subobject of $\intMap_{/S}(X,Y)$.

\begin{blanko}{Fibrations of equivalences.}\label{blanko}
We conclude this section with some alternative characterizations of
fibrations of equivalences.
Let $\C$ be an $\infty$-category.
Recall that the maximal subgroupoid $\C^{\eq}$ of $\C$ can be 
identified with the inclusion of the space of objects of $\C$ into $\C$:
  \[
  \C^{\eq}\simeq\Map(\Delta^0,\C)\too\Fun(\Delta^0,\C)\simeq\C.
  \]
  The space of equivalences $\Map(\Delta^1[\earrow^{-1}],\C)$ of $\C$ is defined using the universal equivalence $\Delta^1[\earrow^{-1}]$, where $\earrow$ denotes the unique nonidentity arrow of $\Delta^1$; since the projection $\Delta^1[\earrow^{-1}]\to\Delta^0$ is a categorical equivalence,
  we see that
  \[
  \C^{\eq}\simeq\Map(\Delta^0,\C)\simeq\Map(\Delta^1[\earrow^{-1}],\C),
  \]
  which is to say that the space of objects of $\C$ is canonically equivalent to
  the space of equivalences of $\C$.
  Hence the $\infty$-categorical nerve $\Map(-,\C):\Delta^{\op}\to\Gpd_\infty$ is a {\em complete} Segal space in the sense of Rezk \cite{Rezk:MR1804411}.
  This notion of completeness is intimately related to the notion of univalence.
Note that if $\C$ is an $\infty$-category with pullbacks, then the inclusion of the target $\Delta^0\cong\Delta^{\{1\}}\to\Delta^1$ of $e$ induces the ``target'' cartesian fibration
\[
\Fun(\Delta^1,\C)\too\C,
\]
which comes equipped with a subfibration
\[
\Fun(\Delta^1[e^{-1}],\C)\subset\Fun(\Delta^1,\C)\too\C
\]
such that the composite is an equivalence.

  In order to compare various sheaves of equivalences
  which arise when considering univalent families in the next section, it will be 
  useful to record a straightforward analogue of this subfibration.
  Consider the inclusion 
  $\Lambda^2_2\to(\Lambda^2_2)^{\triangleleft}\cong\Delta^1\times\Delta^1$;
  it is the nerve of the poset inclusion
  \[
      \xymatrixrowsep{30pt}
    \xymatrixcolsep{30pt}
  \xymatrix @!=0pt {
  &&&&&&& (0,0)\ar[ld]_{\earrow_1}\ar[rd]^{\earrow_2}\ar[dd] &\\
  (0,1)\ar[rd] & & (1,0)\ar[ld]&& \subset && (0,1)\ar[rd] & & (1,0)\,\,. \ar[ld]\\
  &(1,1)&&&&&& (1,1) &}
  \]
\end{blanko}
\begin{lemma}\label{lem:wsibc}
Let $\C$ be an $\infty$-category will pullbacks.
Then the inclusion $\Lambda^2_2\to(\Lambda^2_2)^{\triangleleft}\cong\Delta^1\times\Delta^1$
induces a cartesian fibration
\[
\Fun(\Delta^1\times\Delta^1,\C)\too\Fun(\Lambda^2_2,\C),
\]
and the subfibration
\[
\Fun((\Delta^1\times\Delta^1)[\earrow_1^{-1},\earrow_2^{-1}],\C)
\subset\Fun(\Delta^1\times\Delta^1,\C)\too\Fun(\Lambda^2_2,\C)
\]
is a right fibration with fiber
over the object $X\to S\leftarrow Y$ 
is the $\infty$-groupoid $\Eq_{/S}(X\times S,Y\times S)$.
\end{lemma}
\begin{proof}
  The fiber over the object $\{X\to S\leftarrow Y\}$ is the $\infty$-category
  $\C_{/X\times_S Y}$, and the cartesian arrows over a map 
  $\{X'\to S'\leftarrow Y'\}\to\{X\to S\leftarrow Y\}$ are those 
  $Z'\to X'\times_{S'} Y'$ such that
  $Z'\simeq Z\times_{X\times_S Y}X'\times_{S'} Y'$.  The subfibration has fibers
  the full subcategory of the $\infty$-category $\C_{/X\times_S Y}$ consisting of
  those $Z\to X\times_S Y$ such that both composites $Z\to X$ and $Z\to Y$ are
  equivalences.  
  
  Since we are
  working entirely in $\C_{/S}$, we may suppose without loss of generality that
  $S$ is a terminal object of $\C$.  Using the equivalences
  \[
  \C^{\eq}\simeq\Map(\Delta^0,\C)\simeq\Map(\Delta^1[\earrow^{-1}],\C),
  \]
  we see that the fibers are equivalent to 
  $\{X\}\times_{\C^{\eq}}\C^{\eq}\times_{\C^{\eq}}\C^{\eq}\times_{\C^{\eq}}\{Y\}$,
  which in turn is equivalent to $\{X\}\times_{\C^{\eq}}\{Y\}$.
  This is the space of paths from $X$ to $Y$ in $\C^{\eq}$, i.e.~$\Eq_{\C}(X,Y)$.
  In particular, since the fibers of this fibration
  are $\infty$-groupoids, it is a right fibration.
\end{proof}

\section{Univalent families and local classes of maps}
\label{sec:loc-uni}

\newcommand{\fami}[2]{\Oooh_{#1}^{({#2})}}

\begin{blanko}{The equivalence sheaf of a family.}
  Let $\C$ be a presentable locally cartesian closed $\infty$-category.
  Given a map $p:X\to S$ we write
  \[
  \sEq_{/S}(X):=\sEq_{/S\times S}(p_1^*X,p_2^*X)\too\C_{/S\times S}
  \]
  for the right fibration of equivalences over $S\times S$ from $p_1^*X$ to
  $p_2^*X$, as in Section~\ref{sec:Eq}.  Its fiber over the object $(f,g):T\to
  S\times S$ of $\C_{/S\times S}$ is the space $\Eq_{/T}(f^*X,g^*X)$ of
  equivalences $f^*X\simeq g^*X$ over $T$.  By Theorem~\ref{thm:eqrep}, this
  right fibration is represented by the object of equivalences
  \[
  \intEq_{/S}(X):=\intEq_{/S\times S}(p_1^*X,p_2^*X) \too 
  S\times S .
  \]
\end{blanko}

\begin{blanko}{Univalence.}
The restriction of the internal equivalence object $\intEq_{/S}(X)\to S\times S$ 
along the diagonal $\delta:S\to S\times S$ comes equipped with a canonical 
section; namely, the one which sends the $T$-point $T\to S$ to the identity 
equivalence $\id_{X\times_S T}\in\Eq_{/T}(X\times_S T,X\times_S T)$.
In other words, we obtain a commutative diagram
$$
\xymatrix{
S \ar[rr]^\phi\ar[rd]_\delta && \intEq_{/S}(X) \ar[ld] \\
&S\times S&
}
$$
We say that $p:X\to S$ is a {\em univalent family} if
$
\phi:S \too \intEq_{/S}(X)
$
is an equivalence.\footnote{This definition is a generalization 
of the notion introduced by Voevodsky in the special case of simplicial sets,
which he proposed as an extra axiom for type theory;
cf.~Kapulkin--Lumsdaine--Voevodsky~\cite{Kapulkin-Lumsdaine:1211.2851}, 
\cite{Kapulkin-Lumsdaine-Voevodsky:1203.2553}.}
In order to analyze this notion we proceed to reformulate it in more abstract 
terms.
\end{blanko}

\begin{blanko}{Local classes of maps and universal families.}
  Let $\C$ be a presentable locally cartesian closed $\infty$-category.
  Adopting the notation of Lurie~\cite[Section~6.1]{Lurie:HTT},
  we write $\Oooh_\C = \Fun(\Delta^1,\C)$
  for the $\infty$-category of arrows in $\C$, always considered together with its target fibration
  $p:\Oooh_\C \to \C$ (evaluation at $1\in\Delta^1$).  Its fiber over an
  object $T$ is the slice $\infty$-category $\C_{/T}$.
  
  For a class of maps $\F$ in $\C$, assumed to be stable under 
  pullback,  we denote by $\Oooh_\C^\F$ the 
  full subcategory of $\Oooh_\C$ consisting of those maps which are in $\F$, and we denote by
  $\Oooh_\C^{(\F)}$ the category with the same objects as $\Oooh_\C^\F $ but 
  containing only the $p$-cartesian arrows (the pullback squares).
  The target fibration $\Oooh_\C^{(\F)} \to \C$ is a right fibration, 
  and its fiber over an object $T$ is the $\infty$-groupoid 
  $(\C_{/T})^{(\F)}$,
  the full $\infty$-groupoid of $\C_{/T}^{\eq}$ spanned by the arrows in $\F$.
  When $\F$ is the class of all maps in $\C$ we also write 
  $\Oooh_\C^{(\mathrm{all})}$ for the subcategory of all objects but only the $p$-cartesian arrows.
  Straightening\footnote{\cite[Chapter 2]{Lurie:HTT}} the right fibration $\Oooh_\C^{(\F)} \to \C$ 
  determines a (possibly large) presheaf
  \[
  F:\C^\op\to\widehat{\Gpd}_\infty\,\,.
  \]
  The class $\F$ (assumed stable under basechange) is called {\em
  local}\footnote{[HTT 6.1.3.8]} when 
  $F$ preserves small limits, i.e.~when it is a (possibly large) sheaf.  
  If $\F$ is a local class,
  then the sheaf $F:\C^{\op}\to\widehat{\Gpd}_\infty$ is representable
  if and only if it takes small values.\footnote{[HTT 6.1.6.3]}
  This can be ensured by a cardinal bound: for a regular cardinal $\kappa$, let
  $\F_\kappa$ denote the class of relatively $\kappa$-compact maps in 
  $\F$.\footnote{I.e.~maps $p\in \F$ such that pullback to a 
  $\kappa$-compact object is again $\kappa$-compact (cf.~[HTT 6.1.6.4]).}
  If $\kappa$ is sufficiently large then $\F_\kappa$ is again a local
  class,\footnote{What can go 
  wrong for smaller cardinals is that the class might not be closed under 
  pullback.  See [HTT 6.1.6.7] for details.} called a {{\em bounded local class}}.  In this case
  the corresponding sheaf $F_\kappa$ takes small values and hence
  is representable; 
  the representing object $S_{\F_\kappa}$ is called a 
  {{\em classifying object}} for
  the class.  It carries a {\em universal family}, i.e.~a map $X_{\F_\kappa}\to
  S_{\F_\kappa}$ which is terminal in $\fami{\C}{\F_\kappa}$.
  
Recall that the projection $\C_{/S}\too\C$
is a right fibration; its fiber over $T$ is $\Map(T,S)$,
which is to say that the corresponding
sheaf $\C^\mathrm{op}\to\mathrm{Gpd}_\infty$ 
is represented by $S$.  By the Yoneda lemma, 
a map $p:X\to S$ determines a morphism of right fibrations
\[
\kappa_p:\C_{/S}\too \Oooh_\C^\mathrm{(all)} .
\]
The image of a map $f:T\to S$ is a pullback $f^*X\to T$.  
Similarly, if $p:X\to
S$ belongs to a local class $\F$, then the projection
$(\Oooh_\C^{(\F)})_{/p} \to \C_{/S}$ is a trivial Kan fibration (by the universal 
property of the pullback); choosing a section (which amounts to
choosing pullbacks of $p$), and composing with the projection
$(\Oooh_\C^{(\F)})_{/p} \to \Oooh_\C^{(\F)}$ gives a 
morphism of right fibrations
$$
\xymatrix{\C_{/S} \ar[rr]\ar[rd] & &  \Oooh_\C^{(\F)}\ar[ld]\\
& \C &}
$$
which sends an object $f:T\to S$ to $f^*(p)$.
The fiber over $T$ of this map is
$\Map(T,S) \too (\C_{/T})^{(\F)}$.
Since the projection  $(\Oooh_\C^{(\F)})_{/p} \to \Oooh_\C^{(\F)}$
is an equivalence if and only if $p$ is terminal
  in $\Oooh_\C^{(\F)}$, we conclude:
\end{blanko}

\begin{lemma}\label{lemma:blc}
  Suppose a map $p:X\to S$ belongs to a local class $\F$.  
  Then the functor $\C_{/S} \to \Oooh_\C^{(\F)}$ which sends $f$ to $f^*(p)$ is an equivalence
if and only if $p$ is a universal family for $\F$, i.e.~is a terminal object
in $\Oooh_\C^{(\F)}$.
Moreover, in this case, $\F$ is a bounded local class.
\qed
\end{lemma}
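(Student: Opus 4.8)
The plan is to prove the biconditional first and then deduce boundedness, leaning on the construction carried out in the paragraph preceding the statement. There the functor $\C_{/S}\to\Oooh_\C^{(\F)}$ sending $f$ to $f^*(p)$ is exhibited as the composite $q\circ s$, where $s:\C_{/S}\to(\Oooh_\C^{(\F)})_{/p}$ is a section of the trivial Kan fibration $\pi:(\Oooh_\C^{(\F)})_{/p}\to\C_{/S}$ and $q:(\Oooh_\C^{(\F)})_{/p}\to\Oooh_\C^{(\F)}$ is the canonical projection. First I would observe that $s$, being a section of the trivial Kan fibration $\pi$, is automatically a categorical equivalence (so $s$ and $\pi$ are mutually inverse). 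Consequently the composite $q\circ s$ is an equivalence if and only if $q$ is.

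It then remains to identify when the slice projection $q$ is an equivalence, and here I would invoke the general principle already signalled in the text: for any $\i$-category $\D$ and object $p\in\D$, the projection $\D_{/p}\to\D$ is a right fibration with fiber $\Map_\D(X,p)$ over $X$, and such a right fibration is an equivalence (equivalently, a trivial Kan fibration) exactly when all its fibers are contractible, i.e.\ exactly when $p$ is terminal in $\D$. Applying this with $\D=\Oooh_\C^{(\F)}$ shows that $q$, and hence $f\mapsto f^*(p)$, is an equivalence precisely when $p$ is terminal in $\Oooh_\C^{(\F)}$, which is the defining property of a universal family. This settles the biconditional, and since it is essentially the content flagged in the running text, no real difficulty arises.

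For the ``moreover'' clause, assume $p$ is universal. Terminality supplies, for every object $g:E\to T$ of $\Oooh_\C^{(\F)}$, i.e.\ for every map of $\F$, a morphism $g\to p$ in $\Oooh_\C^{(\F)}$; as the morphisms there are $p$-cartesian arrows, i.e.\ pullback squares, this exhibits $g$ as a pullback of $p$. I would then choose a regular cardinal $\kappa$ large enough that both $X$ and $S$ are $\kappa$-compact and that $\C^\kappa$ is stable under pullback, so that the fixed map $p:X\to S$ is relatively $\kappa$-compact. Since relative $\kappa$-compactness is stable under base change, every pullback of $p$ — hence every map in $\F$ — is relatively $\kappa$-compact. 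Thus $\F$ coincides with its subclass $\F_\kappa$ of relatively $\kappa$-compact maps, and being local by hypothesis it is a bounded local class.

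The only genuine obstacle lies in this last step: verifying that a single fixed morphism between fixed objects becomes relatively $\kappa$-compact once $\kappa$ is taken large enough, which requires that $\C^\kappa$ be closed under the relevant pullbacks. This is the standard accessibility/cardinality bookkeeping underlying the local-class theory (cf.\ [HTT~6.1.6]) and is the sole point where presentability of $\C$, rather than formal manipulation of slices and right fibrations, enters the argument.
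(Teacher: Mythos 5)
Your proof is correct and follows essentially the same route as the paper: the paper also factors $f\mapsto f^*(p)$ through a section of the trivial Kan fibration $(\Oooh_\C^{(\F)})_{/p}\to\C_{/S}$ followed by the projection $(\Oooh_\C^{(\F)})_{/p}\to\Oooh_\C^{(\F)}$, and reduces the biconditional to the fact that this projection is an equivalence iff $p$ is terminal. Your cardinality argument for the ``moreover'' clause (every map in $\F$ is a pullback of $p$, hence relatively $\kappa$-compact for $\kappa$ large enough that $p$ is) is likewise exactly the justification the paper uses, albeit only made explicit later in the proof of Proposition~\ref{prop:univalent=localclass}.
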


\begin{blanko}{Examples.}
  The class of all equivalences is always local.  In a presentable locally
  cartesian closed $\infty$-category $\C$, the class of all maps is local if and
  only if $\C$ is an $\infty$-topos.\footnote{Combine \cite[Theorem 6.1.0.6]{Lurie:HTT}
  with \cite[Theorem 6.1.3.9]{Lurie:HTT}.
  } In this case there is for each sufficiently large
  regular cardinal $\kappa$ a universal family classifying relatively
  $\kappa$-compact maps.  These deserve to be called universes in $\C$.
  The class $\T_k$ of $k$-truncated maps is local in any $\infty$-topos
  (Corollary~\ref{ntr=loc}), and also in any $n$-topos for $k\leq n-2$ 
  (Corollary~\ref{k<=n-2}).
    
  An important example of this situation is when $\C$ is the $2$-topos of
  $1$-groupoids.  There is then a universal family of $0$-groupoids, i.e.~sets.
  This is a suitable setting for combinatorics, and is also exemplified by
  classical algebraic geometry where groupoid-valued sheaves are needed to
  provide universal families of schemes.
\end{blanko}

\begin{proposition}
  The diagonal functor $\Oooh_\C^{(\mathrm{all})} \too \Oooh_\C^{(\mathrm{all})} \times_\C 
  \Oooh_\C^{(\mathrm{all})}$ is equivalent to the right fibration
  \[
  \Fun((\Delta^1\times\Delta^1)[\earrow_1^{-1},\earrow_2^{-1}],\C)\too\Fun(\Lambda^2_2,\C)
  \]
  defined in \ref{blanko}.
\end{proposition}

\begin{proof}
We have a commutative square
\[
\xymatrix{
\Oooh_\C^{(\mathrm{all})}\ar[r]\ar[d] & \Fun((\Delta^1\times\Delta^1)[\earrow_1^{-1},\earrow_2^{-1}],\C)\ar[d]\\
\Oooh_\C^{(\mathrm{all})} \times_\C \Oooh_\C^{(\mathrm{all})}\ar[r] & \Fun(\Lambda^2_2,\C)}
\]
in which the top horizontal map is induced by restriction along the diagonal $\Delta^1\to\Delta^1\times\Delta^1$ and the bottom map is an equivalence.
By Lemma \ref{lem:wsibc}, the vertical fibers are all equivalent, so this square is a pullback and therefore the top map is also an equivalence.
\end{proof}

\begin{corollary}\label{cor:Eq-pbk}
  The sheaf of equivalences $\sEq_{/S} (X) \to \C_{/S\times 
  S}$ fits into the pullback square
  $$\xymatrix{
   \sEq_{/S}(X) \drpullback\ar[r]\ar[d] & \Oooh_\C^{(\mathrm{all})} 
   \ar[d]^{\mathrm{diag}} \\
   \C_{/S\times S} \ar[r]_-{\kappa_p\times \kappa_p} & \Oooh_\C^{(\mathrm{all})} \times_\C \Oooh_\C^{(\mathrm{all})}\,\,.
}$$
\qed
\end{corollary}

\begin{proposition}\label{prop:univalent=localclass}
  Let $p:X \to S$ be a map in a presentable locally cartesian closed
  $\infty$-category $\C$, and let $\F_p$ denote the class of maps obtained as
  pullbacks of $p$. 
  Then the following conditions are equivalent.
  \begin{enumerate}
    \item $p$ is univalent.
    
    \item The commutative square
   $$\xymatrix{
   \C_{/S} \ar[r]^-{\kappa_p}\ar[d]_{\mathrm{diag}} & \Oooh_\C^{(\mathrm{all})} 
   \ar[d]^{\mathrm{diag}} \\
   \C_{/S\times S} \ar[r]_-{\kappa_p\times \kappa_p} & \Oooh_\C^{(\mathrm{all})} \times_\C \Oooh_\C^{(\mathrm{all})}
}$$   
    is a pullback.
  
    \item The map 
    $\C_{/S} \to \Oooh_\C^{\mathrm{(all)}}$ is a monomorphism of right 
    fibrations, i.e.~is fiberwise a monomorphism.  That is,
    for every object $T\in\C$, the map
\begin{eqnarray*}
  \Map_\C(T,S) \longrightarrow \C_{/T}^{\eq}
  \end{eqnarray*}
  which sends $f$ to $f^*(p)$ is a monomorphism.
  
  \item 
  $\F_p$ is a bounded local class with $p$ a terminal object of $\Oooh_\C^{(\F_p)}$.
  \end{enumerate}
\end{proposition}

\begin{proof}
  We first prove (1)$\Leftrightarrow$(2).  The map 
  $$
\xymatrix{
S \ar[rr]^\phi\ar[rd] && \intEq_{/S}(X) \ar[ld] \\
&S\times S&
}
$$
represents the canonical map
$$
\xymatrix{
\C_{/S} \ar[rr]\ar[rd] && \sEq_{/S}(X) \ar[ld] \\
&\C_{/S\times S}&
}
$$
induced by the pullback property of $\sEq_{/S}(X)$ established in 
Corollary~\ref{cor:Eq-pbk}.  Hence $\phi$ is an equivalence if and only if
the square in (2) is a pullback.

  Next we prove (2)$\Leftrightarrow$(3).
  Condition (2) says that for every $T\in \C$, the square
\[
\xymatrix{
\Map(T,S)\ar[r]^{\kappa_p}\ar[d] \drpullback& \C_{/T}^{\eq}\ar[d]\\
\Map(T, S)\times\Map(T,S)\ar[r]_-{\kappa_p\times \kappa_p} & \C_{/T}^{\eq}\times\C_{/T}^{\eq}}
\]
is a pullback.
But this is to say that $\kappa_p$ is a monomorphism.

Finally (3)$\Leftrightarrow$(4).
Suppose $\Map(T,S) \to \C_{/T}^{\eq}$ is a monomorphism.
It is clear
that the essential image of $\Map(T,S) \to \C_{/T}^{\eq}$ is precisely 
$\C^{(\F_p)}_{/T}$,
so we have an equivalence $\Map(T,S) \isopil \C^{(\F_p)}_{/T}$.  
Using Lemma~\ref{lemma:blc}, this means that
$p$ is a terminal object in $\Oooh_\C^{(\F_p)}$, and in particular
$\F_p$ is a bounded local class, as it is bounded by any
regular cardinal
$\kappa$ bigger than all the fibers of $p$.  Conversely, if $\F_p$ is a bounded local
class with $p$ terminal in $\Oooh_\C^{(\F_p)}$, then
we have a monomorphism $\Map(T,S)\isopil \C^{(\F_p)}_{/T} \to 
\C_{/T}^{\eq}$.
\end{proof}

\begin{theorem}\label{poset}
  For any presentable locally cartesian closed $\infty$-category $\C$, the equivalence classes of univalent families $\C$ form a (possibly large) poset which is canonically equivalent to the poset of bounded local classes of maps in $\C$.
\end{theorem}

\begin{proof}
  That the (equivalence classes of) univalent families in $\C$ form
  a (possibly large) poset is to say that for any two univalent families
  $q$ and $p$, the space $\Map_{\Oooh_\C^{(\mathrm{all})}}(q,p)$ 
  is either empty or contractible.  But we know from 
  Proposition~\ref{prop:univalent=localclass}~(4) that $p$ is terminal in
  $\Oooh_\C^{(\F_p)}$, a full subcategory of $\Oooh_\C^{(\mathrm{all})}$;
  hence $\Map_{\Oooh_\C^{(\mathrm{all})}}(q,p)$ is contractible
  if $q$ can be obtained as a pullback of $p$, and clearly empty otherwise.
On the other hand, bounded
local classes form a poset under inclusion.
  Given a bounded local class of maps, the associated universal family $p$
  yields an equivalence
  \[\Map(T,S) \too \C_{/T}^{(\F)},\]
  and postcomposing with the monomorphism $\C_{/T}^{(\F)}\to \C_{/T}^{\eq}$
  yields a monomorphism, so $p$ is univalent, by Proposition~\ref{prop:univalent=localclass}.  It is clear that $\F$ is
  precisely the class of all pullbacks of $p$.  Conversely, given a univalent
  family $p$, the class $\F_p$ of all pullbacks of $p$ is a bounded local class,
  by the previous proposition, and it is clear that the universal family
  associated to $\F_p$ is equivalent to $p$.  It is clear that this one-to-one
  correspondence is inclusion-preserving.
\end{proof}

\begin{corollary}\label{subfamily-iff}
  Let $p':X'\to S'$ be the pullback of a univalent family $p:X\to S$ in $\C$
  along a map $m:S'\to S$.
  Then $p'$ is univalent if and only if $m$ is a monomorphism in $\C$.
\end{corollary}

\begin{proof}
  We use criterion (3) of Proposition~\ref{prop:univalent=localclass}:
  for each object $T\in \C$ we have the commutative diagram
  of spaces
  $$
  \xymatrix{
  \Map(T,S') \ar[d]_{m_!} \ar[r] & \C^{\eq}_{/T} \ar[d]^= \\
  \Map(T,S) \ar[r] & \C^{\eq}_{/T} \ .
  }$$
  The bottom map is mono by Proposition~\ref{prop:univalent=localclass}.
  Hence the top map is mono if and only 
  if $m_!$ is mono, which is to say that $m$ is a mono in $\C$.
\end{proof}

\begin{corollary}
  If $p': X'\to S'$ belongs to some bounded local class in a presentable locally
  cartesian closed $\infty$-category, then it is univalent if and only if it is the
  pullback of the associated universal family along a monomorphism.  \qed
\end{corollary}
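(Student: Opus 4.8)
The plan is to reduce the statement to Theorem~\ref{subfamily-iff} by first producing a classifying map for $p'$, after which both implications become formal. Since $p'$ belongs to a bounded local class $\F$, that class carries an associated universal family $p:X\to S$, terminal in $\Oooh_\C^{(\F)}$, and this $p$ is itself univalent by Corollary~\ref{uni=uni}. By Lemma~\ref{lemma:blc} the functor $\C_{/S}\to\Oooh_\C^{(\F)}$ sending $f$ to $f^*p$ is an equivalence; as it is a map of right fibrations over $\C$, I would take fibers over $S'$ to obtain an equivalence $\Map(S',S)\isopil(\C_{/S'})^{(\F)}$. Since $p'$ is an object of the right-hand groupoid, it corresponds under this equivalence to a point $m:S'\to S$, i.e.\ a map with $p'\simeq m^*p$.

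With this classifying map in hand, Theorem~\ref{subfamily-iff} applies verbatim to the univalent family $p$: the pullback $m^*p\simeq p'$ is univalent if and only if $m$ is a monomorphism. This immediately yields the forward implication, for if $p'$ is univalent then $m$ is mono, exhibiting $p'$ as a pullback of the universal family $p$ along a monomorphism. For the converse I would observe that Theorem~\ref{subfamily-iff} accepts \emph{any} map as its input $m$, so that whenever $p'\simeq (m')^*p$ for a monomorphism $m':S'\to S$, the conclusion that $(m')^*p$ is univalent follows precisely because $m'$ is mono; one does not need $m'$ to coincide with the classifying map constructed above.

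I expect the only step requiring genuine care to be the extraction of $m$ in the first paragraph: one must check that the equivalence of Lemma~\ref{lemma:blc} is compatible with the codomain fibration, so that its fiber over $S'$ really is $(\C_{/S'})^{(\F)}$ and $p'$ sits inside it. Granting this, the corollary is simply Theorem~\ref{subfamily-iff} combined with the universal property of $p$, and no further argument is needed. (Alternatively, one could phrase the whole thing through criterion~(2) of Proposition~\ref{prop:univalent=localclass}, but routing through Theorem~\ref{subfamily-iff} keeps the pullback $m^*p$ manifestly in view and makes the monomorphism condition on $m$ explicit.)
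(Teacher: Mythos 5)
Your proposal is correct and is essentially the argument the paper intends: the corollary is stated with no separate proof precisely because it follows from Theorem~\ref{subfamily-iff} once one notes that membership in a bounded local class yields (via Lemma~\ref{lemma:blc}) a classifying map $m$ with $p'\simeq m^*p$ for the univalent universal family $p$. Your extra care about the fiber of the equivalence over $S'$ and about the converse not requiring $m'$ to be the canonical classifying map is sound but not a departure from the paper's route.
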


Note that this univalence criterion is independent of which bounded local class $p'$ belongs 
to.  Note also that if $p'$ belongs to a local class $\F$, then it also belongs to
the bounded local class $\F_\kappa$, for $\kappa$ sufficiently large, and 
larger than any fiber of $p'$.
Finally note that if $\C$ is an $\infty$-topos, every $p'$ belongs to the local class of 
all maps, so we get a complete classification of univalent families in this 
case.

\begin{theorem}\label{thm:induce-along-R}
  Let $L: \P\to\C \subset \P$ be a accessible locally cartesian
  localization of a presentable locally cartesian closed $\infty$-category
  $\P$.  Then a map $p:X\to S$ in $\C$ is a univalent family in $\C$ if
   and only if it is also a univalent
  family in $\P$.
\end{theorem}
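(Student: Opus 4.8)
The plan is to verify univalence on both sides through criterion (2) of Proposition~\ref{prop:univalent=localclass}, and to bridge the two conditions using the slice adjunction of Lemma~\ref{lem:sliceadjoint}. Recall that $p$ is univalent in $\C$ exactly when, for every local object $T'$, the pullback map $\Map_\C(T',S)\to\C_{/T'}^{\eq}$ sending $g$ to $g^*(p)$ is a monomorphism; and univalent in $\P$ exactly when, for every $T\in\P$, the analogous map $\Map_\P(T,S)\to\P_{/T}^{\eq}$ is a monomorphism. Since $S$ is a local object and $G$ is fully faithful, the localization adjunction supplies a natural equivalence $\Map_\P(T,S)\simeq\Map_\C(LT,S)$, under which a map $f:T\to S$ corresponds to its adjunct $\bar f:LT\to S$. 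Thus the only real work is to compare the two targets $\C^{\eq}_{/LT}$ and $\P^{\eq}_{/T}$ together with the pullback maps into them.

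The key computation is to identify the pullback $f^*(p)\in\P_{/T}$ as the image of a local object under the fully faithful functor of Lemma~\ref{lem:sliceadjoint}. Writing $\bar f:LT\to S$ for the adjunct of $f$, so that $f\simeq\bar f\circ\eta_T$, pullback pasting gives $f^*(p)\simeq\eta_T^*(\bar f^*p)$. Now $p$ is a map between local objects and $\bar f$ is a map of local objects, so the \loccart\ hypothesis — in the equivalent formulation $L(LT\times_S X)\simeq LT\times_S LX$, here with $X$ already local — forces the pullback $\bar f^*p$ to be local, i.e.\ to lie in $\C_{/LT}$. Hence $f^*(p)\simeq(\eta_T^*\circ G_{LT})(\bar f^*p)=R(\bar f^*p)$, where $R=\eta_T^*\circ G_{LT}$ is the fully faithful right adjoint of Lemma~\ref{lem:sliceadjoint}. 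Consequently the $\P$-univalence map factors, for each fixed $T$, as
\[
\Map_\P(T,S)\;\simeq\;\Map_\C(LT,S)\too\C^{\eq}_{/LT}\too\P^{\eq}_{/T},
\]
in which the middle arrow is precisely the $\C$-univalence pullback map at the local object $LT$, and the last arrow is the map on maximal subgroupoids induced by $R$.

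Because $R$ is fully faithful, its restriction to maximal subgroupoids is a monomorphism of $\i$-groupoids. The equivalence now follows from two elementary facts about monomorphisms: a composite of monomorphisms is a monomorphism, and if a composite is a monomorphism then so is its first factor. For the forward direction, if $p$ is univalent in $\C$ then the middle map above is mono for every local $LT$, and composing with the mono $R^{\eq}$ shows the $\P$-univalence map is mono, so $p$ is univalent in $\P$. Conversely, if $p$ is univalent in $\P$ then the composite is mono for every $T$, whence its first factor $\Map_\C(LT,S)\to\C^{\eq}_{/LT}$ is mono; since every local object $T'$ arises as $LT'$, this establishes univalence in $\C$. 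I expect the only genuinely content-bearing step to be the identification $f^*(p)\simeq R(\bar f^*p)$: it is here that the locally cartesian hypothesis is indispensable (to keep pullbacks of local objects local), and one must also check that the pasting $f^*\simeq\eta_T^*\bar f^*$ is compatible with the slice adjunction; everything downstream is formal manipulation of monomorphisms of spaces.
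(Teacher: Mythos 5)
Your proof is correct and follows essentially the same route as the paper: both compare the two univalence maps via the square whose left edge is the adjunction equivalence $\Map_\P(T,S)\simeq\Map_\C(LT,S)$ and whose right edge is the monomorphism $\C^{\eq}_{/LT}\to\P^{\eq}_{/T}$ coming from the fully faithful functor of Lemma~\ref{lem:sliceadjoint}, and then conclude by the two standard closure properties of monomorphisms. The only difference is that you explicitly verify the commutativity of that square (the identification $f^*(p)\simeq R(\bar f^*p)$, using the locally cartesian hypothesis to see that $\bar f^*p$ is local), a point the paper leaves implicit.
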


\begin{proof}
  Let $T$ be an object of $\P$.  In the diagram
  $$
  \xymatrix{
  \Map_{\C}(LT,S) \ar[r] \ar[d]_\simeq & \C_{/LT}^{\eq}\ar[d]\\
  \Map_{\P}(T,S) \ar[r]  & \P_{/T}^{\eq} ,
  }$$
  the left-hand map is an equivalence by adjunction.
  Since $L$ is a \loccart\ localization, $\C_{/LT} \to \P_{/T}$ 
  is always fully faithful by Lemma~\ref{lem:sliceadjoint} and therefore
  $\C^{\eq}_{/LT}\too\P^{\eq}_{/T}$ is mono.  Hence the top map is mono if and 
  only if the bottom map is mono,
  and we conclude again by criterion (3) of
  Proposition~\ref{prop:univalent=localclass}.
\end{proof}
 
\begin{blanko}{Colocalizations.}
  Although at the moment we do not exploit this in any way,
  we mention that there is another class of functors which preserve univalent 
  families, the colocalizations.
\end{blanko}

\begin{proposition}
  Let $R:\P\to\C\subset \P$ be a colocalization (with fully faithful left adjoint 
$F: \C \to\P$) between presentable locally cartesian closed $\infty$-categories.  If $p:X\to S$ is univalent in $\P$ then $R(p):R(X)\to R(S)$ is univalent in 
$\C$.
\end{proposition}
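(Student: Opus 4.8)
The plan is to run the proof dual to that of Theorem~\ref{thm:induce-along-R}, replacing the \loccart\ localization $L\dashv G$ by the colocalization $F\dashv R$ and reading every adjunction in the opposite direction. I would verify univalence of $R(p)$ in $\C$ through criterion~(2) of Proposition~\ref{prop:univalent=localclass}: for each object $T\in\C$ it suffices to show that
$$
\alpha_T : \Map_\C(T, RS) \too \C_{/T}^{\eq}, \qquad f \mapsto f^*(Rp),
$$
is a monomorphism.

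First I would record the formal compatibilities. Since $F$ is a fully faithful left adjoint, the unit $\id_\C\Rightarrow RF$ is an equivalence, so $RFT\simeq T$; and since $R$ is a right adjoint it preserves pullbacks. Hence, for $f:T\to RS$ with adjunct $g=\epsilon_S\circ Ff:FT\to S$, applying $R$ to the pullback square defining $g^{*}(p)$ and using $RFT\simeq T$ identifies $R(g^{*}p)$ with $f^{*}(Rp)$. Writing $\ell:\Map_\C(T,RS)\isopil\Map_\P(FT,S)$ for the adjunction equivalence and $c:\Map_\P(FT,S)\to\P_{/FT}^{\eq}$, $g\mapsto g^{*}p$, for the univalence map of $p$ in $\P$, this says precisely that
$$
\alpha_T \;\simeq\; R_{FT}\circ c\circ \ell,
$$
where $R_{FT}:\P_{/FT}\to\C_{/T}$ is the functor induced by $R$ on slices (via $RFT\simeq T$). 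Now $\ell$ is an equivalence, and $c$ is a monomorphism because $p$ is univalent in $\P$ (Proposition~\ref{prop:univalent=localclass} applied at the object $FT$). Thus everything reduces to controlling $R_{FT}$.

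The heart of the argument is therefore to show that $R_{FT}:\P_{/FT}\to\C_{/T}$ is fully faithful, so that it restricts to a monomorphism $\P_{/FT}^{\eq}\to\C_{/T}^{\eq}$; then $\alpha_T$ is a composite of two monomorphisms and an equivalence, hence a monomorphism, which is what we want. This is the exact dual of the role of Lemma~\ref{lem:sliceadjoint} in Theorem~\ref{thm:induce-along-R}, where it is the \loccart\ property of the localization that makes the slice inclusion $\C_{/LT}\to\P_{/T}$ fully faithful. Here the slice functor $F_T:\C_{/T}\to\P_{/FT}$ is automatically fully faithful (full faithfulness of $F$ passes to slices), so $\C_{/T}$ is coreflective in $\P_{/FT}$ with coreflector $R_{FT}$; what must be checked is that the counit $F_TR_{FT}\Rightarrow\id_{\P_{/FT}}$ is an equivalence. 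This in turn follows once the counit $\epsilon:FR\Rightarrow\id_\P$ is cartesian over $FT$: if the naturality square of $\epsilon$ over $\epsilon_{FT}:FRFT\to FT$ is a pullback, then since $\epsilon_{FT}$ is an equivalence (triangle identity, $F$ fully faithful) every component $\epsilon_W$ for $W\to FT$ is an equivalence, i.e. $R_{FT}$ is fully faithful.

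I expect this last point — that the colocalization is compatible with base change, so that its counit is cartesian — to be the main obstacle, and to be the place where a hypothesis beyond bare ``colocalization'' (the dual of the \loccart\ condition) is genuinely used: a simple example such as $\mathrm{ev}_1:\Fun(\Delta^1,\S)\to\S$ shows that without such compatibility $R(p)$ need not remain univalent. Granting it, the remaining steps are the formal manipulations above, strictly parallel to the proof of Theorem~\ref{thm:induce-along-R}.
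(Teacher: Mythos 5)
Your factorization $\alpha_T\simeq R_{FT}\circ c\circ\ell$ is correct (it uses only that $R$ preserves pullbacks and the triangle identity), but it is not the paper's decomposition, and the difference matters for where the difficulty lands. The paper does not push forward along $R$ on slices; it pulls the comparison into $\P$ along $F$: the right-hand vertical of its square is $F_T:\C_{/T}^{\eq}\to\P_{/FT}^{\eq}$, which is a monomorphism for free because full faithfulness of $F$ passes to slices, and the bottom row is the univalence map $c$ of $p$ at $FT$. One then concludes that $\alpha_T$ is mono because $F_T\circ\alpha_T\simeq c\circ\ell^{-1}$ is mono and a first factor of a monomorphism is a monomorphism. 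This sidesteps your problem of making $R_{FT}$ fully faithful. However, the commutativity of that square, $F_T(f^*(Rp))\simeq g^*(p)$, unwinds (again via $R$ preserving pullbacks) to the assertion that the counit $\epsilon_{g^*X}:FR(g^*X)\to g^*X$ is an equivalence for every $g:FT\to S$, i.e.\ that pullbacks of $p$ to objects in the image of $F$ remain colocal. The paper attributes this to the unit $\id\Rightarrow RF$ being an equivalence, which by itself does not give it; so the cartesianness issue you isolate is not circumvented by the paper's route, it is exactly the content of the unproved commutativity.

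Your suspicion that bare ``colocalization with fully faithful left adjoint'' does not suffice is therefore well founded, but your candidate is not a counterexample: for $\mathrm{ev}_1:\Fun(\Delta^1,\S)\to\S$ the left adjoint is $Z\mapsto(\emptyset\to Z)$, and any map into $(\emptyset\to T)$ has empty $0$-component, so every $g^*X$ is colocal and univalence \emph{is} preserved there. Take instead $R=\mathrm{ev}_0$, whose fully faithful left adjoint is $F(Z)=(Z\xrightarrow{\id}Z)$, and let $p:X\to U$ be an object classifier of the $\i$-topos $\Fun(\Delta^1,\S)$. Then $U_0=\mathrm{ev}_0(U)$ is the groupoid of $\kappa$-small arrows and the fiber of $X_0\to U_0$ over $[A_0\to A_1]$ is $A_0$, so univalence of $\mathrm{ev}_0(p)$ would require the transport map $\Aut_{\Fun(\Delta^1,\S)}(A_0\to A_1)\to\Aut_{\S}(A_0)$ to be an equivalence; for $A=(\ast\to\ast\sqcup\ast\sqcup\ast)$ this is $\Sigma_2\to\ast$. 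Hence $R(p)$ is not univalent, and the proposition as stated needs an extra hypothesis of the kind you anticipate (the counit being cartesian over objects of $\C$, dual to the \loccart\ hypothesis in Theorem~\ref{thm:induce-along-R}), under which either your argument or the paper's goes through.
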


\begin{proof}
  For any  $T\in\C$,
  consider the commutative diagram
  \[
  \xymatrix{
  \Map_{\C}(T,RS)\ar[r] & {\C}_{/T}^{\eq}\ar[d]\\
  \Map_{\P}(FT,S)\ar[r]\ar[u]^\simeq & \P^{\eq}_{/FT}   ,
  }
  \]
  where the left-hand vertical map is an equivalence by adjunction,
  and where the horizontal maps are given by pulling back $R(p)$ and $p$,
  respectively.  That the diagram actually commutes relies on the fact
  that the unit $\id\Rightarrow RF$ for the adjunction is an equivalence.
  The right-hand vertical map is mono since $F$ and hence
  $F : {\C}_{/T} \to \P_{/FT}$ are fully faithful.
  Since $p:X\to S$ is univalent in $\P$, the bottom horizontal map
  is mono, and hence the top horizontal map is also mono, which
  is to say that $R(p): R(X) \to R(S)$ is univalent.
\end{proof}

\section{Factorization systems and truncation}
\label{sec:fact}

We refer to a factorization system\footnote{\cite[Section~5.2.8]{Lurie:HTT}}
on an $\infty$-category $\C$, in which $\E$ is the left orthogonal class 
and $\F$ is the right orthogonal class, by writing $(\E,\F)$.
We will often require that our factorization system $(\E,\F)$ 
is a {\em stable} factorization system, i.e.~that it is stable under basechange; this is automatic for the right orthogonal class $\F$, so this amounts to saying that pullbacks of arrows in the 
left orthogonal class $\E$ stay in $\E$.

Our first aim is to establish that $n$-connected maps and $n$-truncated
maps form a stable factorization system, in any presentable locally
cartesian closed $\infty$-category (Propositions~\ref{conntrunc} and
\ref{nconnstable}).  This will be our main example.

\begin{blanko}{Connectedness.}
  Let $\C$ be an $\infty$-category with finite limits, and let $n\geq -2$.
  {Recall (from [HTT 5.5.6.8]) that a map $f:X\to Y$ in $\C$ is 
  {\em $n$-truncated} when for every object $Z$, the induced map 
  $\Map(Z,X) \to \Map(Z,Y)$ is an $n$-truncated map of spaces, i.e.~has
  $n$-truncated fibers.}
  A map $p:A\to B$ in $\C$
  is called {\em $n$-connected}\,\footnote{Here we follow 
  Joyal~\cite{Joyal:CRM}. 
  When $\C$ is an $\infty$-topos, this notion agrees with
  Lurie's $(n+1)$-connective ([HTT 6.5.1.10]).
  For more general $\C$, it would 
     perhaps be more appropriate to say ``strong $n$-connected'',
     in analogy with the notion of strong
     epimorphism in ordinary category theory.
  }
  if it is left orthogonal to every $n$-truncated
  map in $\C$.  Equivalently, since $n$-truncated maps are stable under 
  basechange,\footnote{[HTT 5.5.6.12]} 
  it is enough to have the unique lifting property
  for all squares of the form
  $$\xymatrix{
     A \ar[r]\ar[d]_p & X \ar[d]^f \\
     B \ar[r]_= & B
  }$$
  with $f$ $n$-truncated. 
  Precisely, this lifting property says that for all $n$-truncated $f:X\to B$,
  the natural map
  $$
  \Map_{/B}(\id_B,f)  \to \Map_{/B}(p,f) 
  $$
  is an equivalence.
  
  An object $A$ is called {\em $n$-connected} if the map $A \to 1$ is 
  $n$-connected.  By the previous remark, this is equivalent to the condition
  that for every $n$-truncated object $X$, the natural map 
  $
  \Map_\C(1,X) \to \Map_\C(A,X)
  $
  is an equivalence.
  
  Recall from [HTT 5.5.6.10] that a map $f:X\to B$ is
  $n$-truncated in $\C$ if and only if
  it is an $n$-truncated object of $\C_{/B}$. The previous observations 
  immediately imply the corresponding result for $n$-connected maps:
\end{blanko}

\begin{lemma}
  In an $\infty$-category $\C$ with finite limits, a
  map $p:A\to B$ is $n$-connected in $\C$ if and only if $p$ is an 
  $n$-connected object in $\C_{/B}$.
  \qed
\end{lemma}

\begin{lemma}\label{ladjnconn}
  \footnote{Generalizing [HTT 6.5.1.16 (4)] which states that the
  left adjoint of a geometric morphism between $\infty$-topoi preserves 
  $n$-connected maps.}
  Any left adjoint between $\infty$-categories with finite limits preserves 
  $n$-connected maps.
\end{lemma}
\begin{proof}
  This follows from orthogonality since the right adjoint preserves 
  $n$-truncated maps, as does every left-exact functor (between 
  $\infty$-categories with finite limits), by [HTT 5.5.6.16].
\end{proof}

The notion of $n$-connected map is most interesting in the case where $\C$ is 
presentable, so that the inclusion functor $i:\tau_{\leq n} \C \to \C$ has a left adjoint
$n$-truncation functor $\tau_{\leq n} : \C \to \tau_{\leq n}\C$.\footnote{[HTT 5.5.6.18]}

\begin{lemma}\label{connected-taueq}
  Let $B$ be an $n$-truncated object in a presentable $\infty$-category $\C$.  Then
  a map $p:A \to B$ is $n$-connected if and only if $\tau_{\leq n}(p)$ is an
  equivalence.
\end{lemma}

\begin{proof}
  If $p$ is $n$-connected, then $\tau_{\leq n}(p)$ is $n$-connected by
  Lemma~\ref{ladjnconn}, and is clearly also $n$-truncated, hence is an
  equivalence.  For the converse implication, assuming that $\tau_{\leq n}(p)$
  is an equivalence, we must establish that $p \bot f$ for every $n$-truncated
  map $f:X\to B$.  Since $B$ is $n$-truncated, also $X$ is
  $n$-truncated,\footnote{by [HTT 5.5.6.14]} so $f$ is actually a map in
  $\tau_{\leq n}\C$, say $f=i(f)$.  But then the desired orthogonality $p \bot
  i(f)$ is equivalent to $\tau_{\leq n}(p) \bot f$, which is true since an
  equivalence is orthogonal to any map.
\end{proof}

\begin{corollary}\label{connected-terminal}
  \footnote{Cf.~[HTT~6.5.1.12] for the case where $\C$ is an $\infty$-topos.}
  In a presentable $\infty$-category $\C$, an object $A$
  is $n$-connected if and only if its truncation $\tau_{\leq n}A$ is terminal.
  \qed
\end{corollary}

\begin{proposition}\label{conntrunc}
  \footnote{Cf.~[HTT 5.2.8.16] for the case where $\C$ is an $\infty$-topos.}
In any presentable $\infty$-category $\C$, and for each $n\geq -2$, there is
  a factorization system formed by the $n$-connected maps and the
  $n$-truncated maps.
\end{proposition}
\begin{proof}
  By construction these two classes are orthogonal, and clearly closed under 
  equivalences.  It therefore only remains to establish that every map $f:A \to 
  B$ admits a factorization as an $n$-connected map followed by an $n$-truncated map.
  This factorization is simply
  $$\xymatrix{
     A \ar[rr]^f\ar[rd]_{\eta_f} && B  \\
     & \tau^B_{\leq n}(A) \ar[ru]_{\tau^B_{\leq n}(f)} & 
  }$$
  where $\tau^B_{\leq n}$ denotes the $n$-truncation in $\C_{/B}$, and $\eta$ is
  the unit for that adjunction.  The map $\tau^B_{\leq n}(f)$ is $n$-truncated
  by construction; the key point is to see that $\eta_f$ is $n$-connected.
  Since clearly 
  $\tau^B_{\leq n}(\eta_f):\tau^B_{\leq n}(A)\to\tau^B_{\leq n}\tau^B_{\leq n}(A)$ 
  is an equivalence in $\C_{/B}$, it
  follows from Lemma~\ref{connected-taueq} that $\eta_f$ is $n$-connected
  as a map in $\C_{/B}$.  Finally note that the projection functor $\C_{/B} \to
  \C$ preserves $n$-connected maps since it is a left adjoint (\ref{ladjnconn}), hence $\eta_f$ is
  $n$-connected also as a map in $\C$.
\end{proof}

\begin{corollary}\label{A->LA}
  For each $n\geq -2$, and for every object $A$ in a presentable category, the unit
  $\eta: A \to \tau_{\leq n} (A)$ is $n$-connected.
  \qed
\end{corollary}

\begin{proposition}\label{nconnstable}
  \!\!\footnote{Cf.~\cite[Proposition~6.5.1.16 (6)]{Lurie:HTT}
  for the case where $\C$ is an $\infty$-topos.}
  In a presentable locally cartesian closed $\infty$-category $\C$, the class of $n$-connected
  maps is stable under basechange.
\end{proposition}

\begin{proof}
  The statement is that for any map $g:B'\to B$, the pullback
  functor $g^* : \C_{/B} \to \C_{/B'}$ preserves $n$-connected objects.
  But since $g^*$ is assumed to be a 
  left adjoint, it preserves $n$-connected maps by Lemma~\ref{ladjnconn},
  and since it also preserves terminal objects, it therefore preserves
  $n$-connected objects.
\end{proof}

Having established our main example of a stable factorization system, we
proceed to some general results.

\begin{lemma}
  Let $\C$ be a presentable locally cartesian closed
  $\infty$-category $\C$ in which sums are disjoint,
  and let $(\E,\F)$ be a stable factorization system in $\C$. 
  Then $\F$ is closed under small sums.
\end{lemma}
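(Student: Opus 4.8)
The plan is to factor the coproduct map, pull the factorization back to each summand, and then reassemble using universality of colimits. Write the map in question as $\coprod_i f_i : \coprod_i X_i \to \coprod_i Y_i$, with each $f_i : X_i \to Y_i$ in $\F$, and factor it as $\coprod_i f_i \simeq p \circ e$ with $e : \coprod_i X_i \to Z$ in $\E$ and $p : Z \to \coprod_i Y_i$ in $\F$. It suffices to prove that $e$ is an equivalence, since then $\coprod_i f_i$ is equivalent to $p \in \F$ and hence lies in $\F$.

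First I would base-change the factorization along each coproduct inclusion $\iota_j : Y_j \to \coprod_i Y_i$. Because $\C$ is presentable locally cartesian closed, colimits are universal, so pullback distributes over the coproduct: $Y_j \times_{\coprod_i Y_i} \coprod_i X_i \simeq \coprod_i \bigl(Y_j \times_{\coprod_i Y_i} X_i\bigr)$. As each $X_i \to \coprod_i Y_i$ factors through $\iota_i$, disjointness of sums identifies the $i$-th term with $X_j$ when $i = j$ and with a pullback over the (strict) initial object, hence with $\emptyset$, when $i \neq j$; thus the base change of $\coprod_i X_i$ along $\iota_j$ is $X_j$ and the base change of $\coprod_i f_i$ is $f_j$ itself. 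Applying this base change to the full factorization, and using that both $\E$ and $\F$ are stable under basechange, produces a factorization $f_j \simeq p_j \circ e_j$ with $e_j \in \E$ and $p_j \in \F$, where $e_j$ is the base change of $e$. Since $f_j$ already lies in $\F$, the essential uniqueness of $(\E,\F)$-factorizations forces $e_j$ to be an equivalence.

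It remains to reassemble the $e_j$ into $e$. Setting $Z_j := Y_j \times_{\coprod_i Y_i} Z$, universality of colimits gives $Z \simeq Z \times_{\coprod_i Y_i} \coprod_i Y_i \simeq \coprod_j Z_j$, and likewise $\coprod_i X_i \simeq \coprod_j X_j$. Under these identifications the defining pullback squares exhibit $e$ as the coproduct $\coprod_j e_j$. As each $e_j$ is an equivalence and colimits preserve equivalences, $e$ is an equivalence, and the proof is complete.

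I expect the reassembly to be the main obstacle. The difficulty is not in seeing that the base changes $Z_j$ recover $Z$ --- that is a direct application of universality --- but in verifying that the original map $e$ is genuinely identified with $\coprod_j e_j$ under these equivalences, which requires tracking the base-change squares rather than invoking universality as a black box. By contrast, the disjointness hypothesis is used only in the single computation of the base change of the coproduct, and that step is routine once strict initiality of $\emptyset$ is noted.
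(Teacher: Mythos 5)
Your argument is correct, but it takes a genuinely different route from the paper's. The paper never factors the sum map: it verifies the defining lifting property of $\F$ directly, by taking an arbitrary square against a map $f'\in\E$, pulling the whole lifting problem back along each sum inclusion (where $f'_i\in\E$ by basechange stability and $f_i\in\F$ by hypothesis), obtaining fillers $u_i$ by orthogonality, and then summing the fillers --- using universality of colimits to see that the sum of the pulled-back squares recovers the original square. You instead factor $\coprod_i f_i = p\circ e$, base-change the factorization to each summand, use essential uniqueness of $(\E,\F)$-factorizations (comparing $p_j\circ e_j$ with $f_j\circ\mathrm{id}$) to conclude each $e_j$ is an equivalence, and then reassemble $e\simeq\coprod_j e_j$ by descent. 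Both proofs lean on the same two inputs --- disjointness to identify the base change of the sum along $\iota_j$ with $f_j$, and universality of colimits for the reassembly step --- and both have a reassembly of comparable delicacy (you reassemble the $\E$-part of the factorization; the paper reassembles the diagonal fillers, which likewise requires checking that the summed map really fills the original square rather than invoking universality as a black box). What your route buys is that it avoids quantifying over all of $\E$: a single factorization is analyzed, at the cost of invoking uniqueness of factorizations; the paper's route is more elementary in that it only ever uses orthogonality, never existence or uniqueness of factorizations. One small point worth making explicit in your write-up: the identification $Y_j\times_{\coprod Y}Y_i\simeq\emptyset$ for $i\neq j$ is exactly disjointness, but the case $i=j$ uses that sum inclusions are monomorphisms, which is a (standard) consequence of disjointness plus universality rather than disjointness alone.
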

\begin{proof}
  Let $\{f_i : X_i \to S_i\}$ be a small set of maps in $\F$, and let $f:X\to S$ be 
  their sum in $\C$.  Since sums are universal, for each $i$ we have 
  a pullback square
  $$\xymatrix{
  X_i\drpullback \ar[d]_{f_i} \ar[r] & X \ar[d]^f \\
  S_i \ar[r] & S
  }$$
  where the horizontal maps are the sum inclusions.
  Now consider a 
  commutative square
  \begin{equation}\label{sqem}
  \xymatrix{
  X' \ar[d]_{f'} \ar[r] & X \ar[d]^f \\
  S' \ar[r]_s & S
  }  \end{equation}
  where $f'$ belongs to $\E$. 
  We must show that the space of diagonal fillers is contractible,
  which  more formally  amounts to (cf.~dual of [HTT 5.2.8.3])
  establishing that the map
  \begin{equation}\label{pref'}
  \Map_{/S}(s,f) \too \Map_{/S}(sf',f)
  \end{equation}
  given by pre-composition with $f'$ is a homotopy equivalence.
  Pulling back the square to $S_i$ yields
  $$\xymatrix{
  X'_i \ar[d]_{f'_i} \ar[r] & X_i \ar[d]^{f_i} \\
  S'_i \ar[r]_{s_i} & S_i ,
  }$$
  and the sum of all these squares gives back the original square \eqref{sqem},
  since sums are disjoint.  Hence the map in \eqref{pref'} is
  the product (indexed by $i$) of all the maps
  $$
    \Map_{/S_i}(s_i,f_i) \too \Map_{/S_i}(s_if'_i,f_i)
$$
given by precomposition with $f'_i$.
But each of these maps is an equivalence since $f'_i \bot f_i$ by 
basechange stability of the factorization system.
Thus the product map is an equivalence too, as required.
\end{proof}

\begin{theorem}\label{M=local}\hspace*{-3pt}\footnote{A version of this result 
  for ordinary categories can be found in \cite[Remark 6.12]{Carboni-Janelidze-Kelly-Pare}}
  Let $\X$ be an $\infty$-topos and let $(\E,\F)$ be a 
  factorization system in $\X$ which is stable under basechange.
  Then $\F$ is a local class.
\end{theorem}

\begin{proof}\hspace*{-3pt}\footnote{generalizing [HTT 6.5.2.22]}
  Since $\F$ is closed under sums by the previous lemma, we can apply
  [HTT 6.2.3.14]: given a pullback diagram
  $$\xymatrix{
  X_0 \ar[d]_{f_0}\ar[r] \drpullback & X \ar[d]^f
  \\
  Y_0 \ar[r]_e & Y}$$
  in which $e$ is an effective epi, we need to show that if $f_0$ belongs to $\F$
  then already $f$ belongs to $\F$.
  Let $Y_\bullet$ be the \v Cech nerve of $Y_0 \to Y$, and let $X_\bullet$
  be the \v Cech nerve of $X_0 \to X$ (the pullback of $X$ to $Y_\bullet$).
  At each level $n$, since $\F$ is stable under basechange, 
  the map $X_n \to Y_n$ is in $\F$.
  We need to check
  that $f$ 
  is right orthogonal to any map in $\E$.  This condition can be 
  expressed\footnote{dual of [HTT 5.2.8.3]}
  by saying that for any map $p: A\to B$ in $\E$, the map given by 
  precomposition with $p$
  $$
  \Map_{\X_{/Y}}( B,  X) \too 
  \Map_{\X_{/Y}}( A,  X)
  $$
  is a homotopy equivalence.  Some abuse of notation is involved here: we 
  assume that the map $p: A \to B$ is over $Y$, so as to form the square we 
  need to fill, and the objects $B$, $X$ and $Y$ are regarded as objects over 
  $Y$.
  
  Now each of these mapping spaces can be obtained as the totalization of a 
  cosimplicial space (e.g.~for $B$)
  which in degree $n$ is given by $\Map_{\X_{/Y_n}}( B_n,  X_n)$,
  where $ A_\bullet$ and $ B_\bullet$ are the objects
  pulled back to the \v Cech nerve of  $Y_0 \to Y$.  So it is enough to
  show that for each $n$, the map
    $$
  \Map_{\X_{/Y_n}}( B_n,  X_n) \too 
  \Map_{\X_{/Y_n}}( A_n,  X_n)
  $$
  is a homotopy equivalence.  But this follows from the assumption:
  we have already remarked that each of the maps $f_n$ is in $\F$,
  and since the class $\E$ is assumed to be stable under basechange,
  also each of the maps $p_n$ is in $\E$. So by orthogonality
  of $p_n$ with $f_n$ we do have the required homotopy equivalence.
\end{proof}

\begin{corollary}\label{ntr=loc}
  In an $\infty$-topos, the class of $n$-truncated maps is local ($-2\leq n < 
  \infty$).
  The class of hypercomplete maps is also local
  (and of course the class of all maps is local).
\end{corollary}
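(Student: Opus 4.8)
The plan is to exhibit each of the three classes as the right orthogonal class $\F$ of a basechange-stable factorization system on $\X$, and then invoke Proposition~\ref{M=local} directly.

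For the $n$-truncated maps, recall that in any presentable $\i$-category the $n$-truncated morphisms constitute the right class of a factorization system whose left class consists of the $(n+1)$-connective morphisms (the image/Postnikov factorization).\footnote{[HTT, Example 5.2.8.16]} To apply Proposition~\ref{M=local} it remains only to verify that this factorization system is stable under basechange, i.e.\ that the left class $\E$ of connective maps is closed under pullback. This is exactly where the $\i$-topos hypothesis enters: since colimits are universal, base change is exact and preserves the fibers of a map, and $(n+1)$-connectivity of a map is a condition on its fibers; hence the pullback of an $(n+1)$-connective map is again $(n+1)$-connective.\footnote{[HTT 6.5.1.16]} Proposition~\ref{M=local} then yields that the class $\F$ of $n$-truncated maps is local.

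For the hypercomplete maps the argument is identical in form: they are the right class of a factorization system whose left class is the $\i$-connective maps, and in an $\i$-topos $\i$-connectivity is preserved by base change for the same fiberwise reason (base change preserves fibers and each homotopy sheaf), so the factorization system is again stable under basechange and Proposition~\ref{M=local} applies. Finally, for the class of all maps one may take $\E$ to be the equivalences, so that $(\E,\F)$ with $\F$ all maps is a trivially basechange-stable factorization system; alternatively this is just the characterization of $\i$-topoi recalled in the Examples above.

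The only genuine content to check is the basechange-stability of the left (connective) class, the existence of the two factorization systems being standard; once that stability is in hand, locality is immediate from Proposition~\ref{M=local}. I therefore expect the stability of $n$-connective (resp.\ $\i$-connective) maps under pullback to be the sole point requiring the full strength of the $\i$-topos axioms.
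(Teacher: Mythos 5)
Your proposal is correct and follows the same route as the paper: both exhibit the $n$-truncated (resp.\ hypercomplete) maps as the right class of the ($n$-connected, $n$-truncated) (resp.\ ($\i$-connected, hypercomplete)) factorization system, cite [HTT 6.5.1.16] for basechange-stability, and conclude by Proposition~\ref{M=local}. The paper's proof is just a terser version of yours.
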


\noindent
{Indeed, each of the classes listed is the right orthogonal class of a 
stable factorization system, the class of $n$-truncated maps by 
Proposition~\ref{nconnstable}.  Regarding hypercomplete maps, recall
that a map is {\em $\infty$-connected} if it is $n$-connected for all $n\geq -2$.
These form a basechange stable and saturated class of maps, and constitute
therefore\footnote{[HTT 5.5.5.7]} the left class of a factorization system,
whose right class is by definition the class of hypercomplete maps ([HTT~6.5.2.21]).}

\begin{blanko}{$\Q$-quasi-left-exact localization.}
  A localization $L:\P\to\C\subset\P$ is called is {\em $\Q$-quasi-left-exact}, with
  respect to a class of maps $\Q$ in $\C$ (closed under equivalences), when for
  each pullback square in $\P$
$$\xymatrix{
X' \drpullback \ar[r]\ar[d] &X \ar[d] \\
Y' \ar[r] & Y
}$$
the natural comparison map $L(X') \to L(Y') \times_{L(Y)} L(X)$ 
belongs to $\Q$.
\end{blanko}

A map in a presentable $\infty$-category $\C$ is called an 
{\em $n$-gerbe}\footnote{Joyal~\cite{Joyal:CRM}, p.181}
when it is simultaneously
$(n-1)$-connected
and $n$-truncated.  (Intuitively, its only nonzero
relative homotopy group is $\pi_n$.)
Let $\K_n$ denote the class of  $n$-gerbes in $\C$.
Note that $\K_n$ also restricts to a class in the full
subcategory $\tau_{\leq n}\C\subset\C$ of $n$-truncated objects.

\begin{lemma}\label{K}
For an $\infty$-topos $\P$, the truncation
functor $L:=\tau_{\leq n} : \P\to\tau_{\leq n}\P$ is $\K_n$-quasi-left-exact.
\end{lemma}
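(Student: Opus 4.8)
The plan is to unwind the definition: for a pullback square $X'=Y'\times_Y X$ in $\P$ I must show that the comparison map
$$
c : \tau_{\le n}(X') \too \tau_{\le n}(Y')\times_{\tau_{\le n}(Y)}\tau_{\le n}(X)
$$
is an $n$-gerbe, i.e.\ is simultaneously $n$-truncated and $(n-1)$-connected. Write $P$ for its target. The $n$-truncatedness is essentially free: $n$-truncated objects are stable under limits, so $P$ (a pullback of $n$-truncated objects) is $n$-truncated, as is the source $\tau_{\le n}(X')$; and a map whose source and target are both $n$-truncated is itself an $n$-truncated map [HTT 5.5.6.14]. This settles one half of the statement and reduces the lemma to proving that $c$ is $(n-1)$-connected.

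For the connectivity I would exploit that $c$ is, by construction, the factorization through the truncation unit of the canonical map $q:X'\to P$ assembled from the three units $\eta_X,\eta_Y,\eta_{Y'}$ via the universal property of $P$; that is, $q=c\circ\eta_{X'}$. The unit $\eta_{X'}$ is $n$-connected, hence $(n-1)$-connected, and the $(n-1)$-connected maps form the left class of the $((n-1)\text{-connected},(n-1)\text{-truncated})$ factorization system on the presentable $\i$-category $\P$. The left class of any factorization system has the cancellation property that if $gf$ and $f$ lie in it then so does $g$; applying this with $g=c$ and $f=\eta_{X'}$ reduces the problem to showing that $q$ is $(n-1)$-connected.

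The geometric core is therefore the connectivity of the comparison map $q$ of pullbacks associated to the map of cospans $(X\to Y\leftarrow Y')\to(\tau_{\le n}X\to\tau_{\le n}Y\leftarrow\tau_{\le n}Y')$ whose three legs are the $n$-connected units. I would factor $q$ by changing one corner at a time. Changing the outer corners $X$ and $Y'$ produces base changes of $\eta_X$ and $\eta_{Y'}$, hence $n$-connected maps. Changing the middle corner $Y$ is, via the standard identity $X\times_Y Y'\simeq(X\times_{\tau_{\le n}Y}Y')\times_{Y\times_{\tau_{\le n}Y}Y}Y$, a base change of the relative diagonal $\delta_{\eta_Y}:Y\to Y\times_{\tau_{\le n}Y}Y$. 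The single lost degree of connectivity is localized here: the diagonal of an $n$-connected map is $(n-1)$-connected, its fibres being the loop spaces of the fibres of $\eta_Y$. Composing the three factors, $q$ is $(n-1)$-connected, and with the cancellation step the lemma follows.

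The main obstacle is the connectivity calculus underpinning the last paragraph, namely that $n$-connected maps and their diagonals keep their connectivity under the base changes used in the factorization of $q$. This base-change stability, together with the diagonal-shift estimate, is the familiar behaviour of the $(n\text{-connected},n\text{-truncated})$ factorization system in an $\i$-topos [HTT 6.5.1.16]; in the stated generality of a presentable $\i$-category I would secure it by reducing to that setting, for instance by testing connectivity through a presheaf model where homotopy sheaves and the descent needed for base-change stability are available. It is this stability in the presentable setting, rather than the behaviour of the diagonal itself, that demands the most care.
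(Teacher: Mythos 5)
Your argument is correct and is essentially the paper's own proof: the same reduction to showing the uncorrected comparison map $q\colon Y'\times_Y X\to \tau_{\le n}Y'\times_{\tau_{\le n}Y}\tau_{\le n}X$ is $(n-1)$-connected, followed by the same three-step factorization changing one corner of the cospan at a time, with the middle step exhibited as a base change of the relative diagonal of $\eta_Y$, which accounts for the single lost degree of connectivity. The base-change stability of connected maps that you flag at the end as the delicate point is also the only external input in the paper's proof, which simply invokes [HTT 6.5.1.16].
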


\begin{proof}
With reference to a pullback square as above,
we must show that the comparison map $L(Y'\times_Y X) \to LY' \times_{LY} LX$ 
belongs to $\K_n$.
But this map is $Lu$, where
$u : Y'\times_Y X \to LY' \times_{LY} LX$ is the composite
$$
Y'\times_Y X \stackrel{u_1}\too Y' \times_{LY} X 
\stackrel{u_2}\too LY' \times_{LY} X
\stackrel{u_3} \too LY' \times_{LY} LX .
$$
The map $u_3$ is the pullback of $X \to LX$, which is $n$-connected 
by Corollary~\ref{A->LA}, 
and since $n$-connected maps are stable under pullback 
by Proposition~\ref{nconnstable},
$u_3$ is $n$-connected as well,
and in particular $(n-1)$-connected.  The same argument applies to $u_2$, 
which is the pullback of $Y' \to LY'$.  Finally $u_1$ sits in the diagram
$$\xymatrix{
Y'\times_Y X \drpullback \ar[r]^{u_1} \ar[d] & Y'\times_{LY} X \ar[d] \\
Y \ar[r]_-\delta & Y\times_{LY} Y.
}$$
Since $Y \to LY$ is $n$-connected, the diagonal $\delta$ is
$(n-1)$-connected\footnote{[HTT 6.5.1.18]}
and hence the pullback $u_1$ is $(n-1)$-connected.
Altogether $u= u_3 \circ u_2 \circ u_1$ is $(n-1)$-connected.
Since $L$ preserves $(n-1)$-connected maps 
by Lemma~\ref{ladjnconn},
also $Lu$ is $(n-1)$-connected,
and it is also $n$-truncated.  Hence it is in $\K_n$ as asserted.
\end{proof}

\begin{lemma}\label{Q}
  Let $\U$ be a local class in a presentable locally cartesian closed
  $\infty$-category $\P$ and let $L:\P\to\C\subset\P$ be a 
  $\Q$-quasi-left-exact, accessible, and \loccart\
  localization for a class $\Q$ in $\C$.  
  Consider a class $\F$ in $\C$ such that $L(\U) \subset \F$ and $G(\F) 
  \subset \U$ (here $G$ denotes a right adjoint to $L$).
  If $\Q$ is left orthogonal to $\F$
  then $\F$ is a local class in $\C$.
\end{lemma}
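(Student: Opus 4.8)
The plan is to verify the two defining properties of a local class for $\F$: that it is stable under basechange in $\C$, and that the associated sheaf $F\colon\C^\op\to\widehat{\Gpd}_\i$, $T\mapsto\C_{/T}^{(\F)}$, preserves small limits. Stability under basechange is immediate from the hypotheses relating $\F$ to $\U$. Since $\C\subseteq\P$ is a reflective localization, finite limits in $\C$ are computed in $\P$, so for $f\colon Y\to T$ in $\F$ and any $g\colon T'\to T$ in $\C$ the pullback $g^*f$ coincides with the pullback formed in $\P$. Now $Gf$ lies in $\U$ by hypothesis and $\U$ is stable under basechange (being local), so $g^*f\in\U$; as its source and target are local and $L(\U)\subseteq\F$, applying $L$ (which fixes local objects) gives $g^*f\in\F$.

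The crux is an orthogonality sublemma: if $q\colon Y\to P$ is a morphism in $\Q$ over an object $B$ between two objects $(Y\to B)$ and $(P\to B)$ both lying in $\F$, then $q$ is an equivalence. I would prove this by applying $\Q\perp\F$ twice. The square with $q$ on the left, the $\F$-map $Y\to B$ on the right, $\id_Y$ across the top and $P\to B$ along the bottom admits a unique filler $s\colon P\to Y$, yielding a retraction $sq\simeq\id_Y$. Feeding $s$ back into the square with $q$ on the left and $P\to B$ on the right, both $\id_P$ and $qs$ are fillers, so uniqueness forces $qs\simeq\id_P$; hence $q$ is an equivalence. This is the mechanism by which the class $\Q$ becomes invisible to $\F$.

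It remains to show $F$ preserves small limits. Given a small colimit $T=\colim_\alpha T_\alpha$ in $\C$, set $\hat T=\colim_\alpha T_\alpha$ computed in $\P$, so that $T=L\hat T$ with unit $\eta\colon\hat T\to T$. Since $\U$ is local, its sheaf preserves limits, giving $\P_{/\hat T}^{(\U)}\simeq\lim_\alpha\P_{/T_\alpha}^{(\U)}$; under this equivalence a compatible family of $\F$-maps $(Y_\alpha\to T_\alpha)$, regarded via $G(\F)\subseteq\U$ as $\U$-descent data, corresponds to a single $\U$-map $W\to\hat T$ with $Y_\alpha\simeq T_\alpha\times_{\hat T}W$. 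I would define the comparison $\rho\colon\C_{/T}^{(\F)}\to\lim_\alpha\C_{/T_\alpha}^{(\F)}$ by restriction (pullback along $T_\alpha\to T$) and an inverse $\sigma$ sending such a family to $LW\to L\hat T=T$, which lands in $\F$ because $L(\U)\subseteq\F$. Both composites are computed using $\Q$-quasi-left-exactness of $L$: applied to the pullback square defining $Y_\alpha$ (respectively the square defining $\hat T\times_T Z$ for $Z\in\C_{/T}^{(\F)}$), it produces a comparison map in $\Q$ between two $\F$-objects over $T_\alpha$ (respectively over $T$), which the sublemma upgrades to an equivalence. These comparison maps are natural, so they assemble into equivalences $\rho\sigma\simeq\id$ and $\sigma\rho\simeq\id$, exhibiting $\rho$ as the canonical equivalence $F(\colim_\alpha T_\alpha)\simeq\lim_\alpha F(T_\alpha)$.

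The main obstacle is precisely the discrepancy that colimits in $\C$ are not the colimits of $\P$ but their localizations $L\hat T$; bridging $\F$-objects over $L\hat T$ with $\U$-descent data over $\hat T$ is what forces the use of $\Q$-quasi-left-exactness to manufacture the comparison maps and of the orthogonality sublemma to kill them. A secondary bookkeeping point is to track throughout which $\U$-objects have local domain (equivalently lie in $\F$), and to arrange that the constructed equivalences are natural in the diagram, so that they identify the canonical limit-comparison map rather than merely produce an abstract equivalence.
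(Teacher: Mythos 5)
Your proof is correct in substance but follows a genuinely different route from the paper's. The paper invokes [HTT, Lemma 6.1.3.7] to reduce locality of $\F$ to a finite condition: every span of cartesian transformations between $\F$-maps admits a pushout with cartesian legs and apex in $\F$. It then lifts a single such span to $\P$ via $G$, forms the pushout there using locality of $\U$, applies $L$, and checks that the resulting square is still cartesian --- which is exactly where $\Q$-quasi-left-exactness and $\Q\bot\F$ enter, and in the same way as in your sublemma: the comparison map to the honest pullback lies in $\Q$ and also in $\Q^\bot$ (by the cancellation property of right orthogonal classes, [HTT 5.2.8.6]), hence is an equivalence. Your retract argument is a direct proof of that same fact. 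What you do differently is to verify the descent condition for an arbitrary small colimit $T=\colim_\alpha T_\alpha$ head-on, using the descent equivalence $\P^{(\U)}_{/\hat T}\simeq\lim_\alpha\P^{(\U)}_{/T_\alpha}$ supplied by locality of $\U$ to manufacture the inverse $\sigma$, and then killing the two $\Q$-comparison maps produced by quasi-left-exactness with the sublemma. This is more self-contained and makes the geometry of the argument transparent, but it pays for that in the coherence bookkeeping you flag at the end: over an arbitrary indexing diagram the homotopies $\rho\sigma\simeq\id$ and $\sigma\rho\simeq\id$ must be constructed as natural transformations of functors into an $\i$-categorical limit, not checked objectwise. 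Your ingredients (the restriction functors, the descent equivalence, $L$, and the $\Q$-comparison maps, all of which are natural) do assemble correctly, but this is precisely the labor that the reduction to pushouts of spans in [HTT 6.1.3.7] is designed to eliminate. A small bonus of your write-up is that you verify basechange-stability of $\F$ explicitly (via $G(\F)\subseteq\U$, pullback-stability of $\U$, the fact that limits in $\C$ are computed in $\P$, and $L(\U)\subseteq\F$), a point the paper leaves implicit.
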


When $\Q$ is the class of equivalences, we are just talking left-exact
localization.
The lemma implies in this case that left-exact localizations
preserve the property that all maps form a local class, and actually the proof is only a slight modification of the proof of this 
result in \cite[Proposition~6.1.3.10]{Lurie:HTT}.
For more general $\Q$, the class of all maps cannot stay local, but the lemma
says that more restricted classes can, provided they are right orthogonal to $\Q$.

\begin{proof}
  By \cite[Lemma 6.1.3.7]{Lurie:HTT}, we must show that, for each diagram $\sigma: \Lambda_0^2 \to \Oooh_\C$, denoted $g\overset{\alpha}{\longleftarrow} f\overset{\beta}{\longrightarrow} h$,
    in which $f$, $g$, and  $h$ belong to $\F$ and $\alpha$ and $\beta$ are 
  cartesian transformations, there is a colimit diagram
  $\overline \sigma: \Lambda_0^2{}^\triangleright \to \Oooh_\C$
  $$
  \xymatrix{f \ar[r]^\beta \ar[d]_\alpha & h \ar[d]^{\alpha'} \\
  g \ar[r]_{\beta'} & p
  }$$
  such that $\alpha'$ and $\beta'$ are cartesian and $p$ is again 
  in $\F$.  We can assume that $\sigma= L\circ \sigma'$ for some diagram 
  $\sigma': \Lambda_0^2 \to \Oooh_\P$ equivalent to $G \circ \sigma$.  As $G$ 
  is a right adjoint, $G(\alpha)$ and $G(\beta)$ are cartesian, and by 
  assumption $G(f)$, $G(g)$ and $G(h)$ are in $\U$, so since $\U$ is a local 
  class, there is a colimit diagram
  $\overline \sigma': \Lambda_0^2{}^\triangleright \to \Oooh_\P$
  $$
  \xymatrix{
  G(f) \ar[r]^{G(\beta)} \ar[d]_{G(\alpha)} & G(h) \ar[d]^{\gamma} \\
  G(g) \ar[r]_{\delta} & q
  }$$
  in which $\gamma$ and $\delta$ are cartesian and $q$ is in 
  $\U$. Now apply $L$ to get a pushout square
  $$
  \xymatrix{
  f \ar[r]^{\beta} \ar[d]_{\alpha} & h \ar[d]^{L\gamma} \\
  g \ar[r]_-{L\delta} & L(q) .
  }$$
  By assumption, $L(q) \in \F$, so it remains to check that $L(\gamma)$ and 
  $L(\delta)$
  are cartesian.  Let's look at $L(\delta)$: its components
  are  diagrams
  $$\xymatrix{
  A' \ar[r]^{L(\delta)_0} \ar[d]_g & A \ar[d]^{L(q)} \\
  B' \ar[r]_{L(\delta)_1}
  & B
  }$$
  in which $g$ and $L(q)$ are in $\F$.  A priori this square might not be a 
  pullback, but the comparison map $u$
  $$
  \xymatrix{
  A' \ar[r]^-u \ar[rd]_g & B' \times_B A \ar[d]_\pi \ar[r] \drpullback & A 
  \ar[d]^{Lq}\\
  & B' \ar[r]_{L(\delta)_1} & B }$$
  belongs to $\Q$.  Now consider the class $\overline \F = \Q^\bot$.
  It is stable under pullback, hence the map $\pi$ belongs to 
  $\overline\F$ (since $L(q)$ does), and $g$ also belongs to $\F\subset 
  \overline \F$ by assumption.
  By the left-cancellation property of right orthogonal 
  classes\footnote{\label{lcanc}By the left-cancellation property we mean: if in a triangle 
  $gf=h$ both $g$ and $h$ belong to a right orthogonal class, then so does $f$, 
  cf.~[HTT 5.2.8.6].} 
  it follows that also $u$ belongs to $\overline\F$.
  Since it also belongs to $\Q$ it must therefore be an equivalence, because the intersection of orthogonal classes is necessarily contained in the class of equivalences.
  Therefore $L(\delta)$ is cartesian.  (In conclusion, 
  although $L$ might not preserve all pullbacks, it does preserve just enough
  pullbacks to preserve locality).
\end{proof}

\begin{corollary}\label{k<=n-2}
  In an $n$-topos, the class $\T_k$ of $k$-truncated maps is local 
  for all $-2 \leq k \leq n-2$.
\end{corollary}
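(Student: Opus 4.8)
The plan is to trade locality of $\T_k$ for a statement about truncations of $\C$ via Corollary~\ref{thenk+2}, and then to import the missing facts from an ambient $\i$-topos. Concretely, Corollary~\ref{thenk+2} says that $\T_k$ is local in $\C$ exactly when $\tau_{\leq k+1}\C$ is a $(k+2)$-topos. Hence the assertion for all $-2\leq k\leq n-2$ is equivalent to the claim that $\tau_{\leq m}\C$ is an $(m+1)$-topos for every $-1\leq m\leq n-1$; that is, that every truncation of an $n$-topos is again a (lower) topos. The two extreme cases are in any event immediate, since $\T_{-2}$ is the class of equivalences and $\T_{-1}$ the class of monomorphisms, both of which are always local.

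To prove this truncation-stability I would pass to an $\i$-topos. Every $n$-topos arises as the $(n-1)$-truncation $\C\simeq\tau_{\leq n-1}\X$ of an $\i$-topos $\X$ (this is part of the comparison between $n$-topoi and $\i$-topoi, \cite[\S 6.4.5]{Lurie:HTT}), and since iterated truncation collapses we have $\tau_{\leq k+1}\C\simeq\tau_{\leq k+1}\X$ whenever $k+1\leq n-1$. Now in the $\i$-topos $\X$ every class $\T_k$ is local by Corollary~\ref{ntr=loc}; applying the truncation-descent corollary preceding Corollary~\ref{thenk+2} then shows that $\T_k$ is local in $\tau_{\leq k+1}\X$, and feeding this back through Corollary~\ref{thenk+2} (together with idempotence of truncation) shows that $\tau_{\leq k+1}\X$, hence $\tau_{\leq k+1}\C$, is a $(k+2)$-topos. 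A final application of Corollary~\ref{thenk+2}, this time to $\C$ itself, yields that $\T_k$ is local in $\C$, as desired. Here one uses that each $\tau_{\leq m}\C$ and each $\tau_{\leq k+1}\X$ is again presentable and locally cartesian closed, by Lemma~\ref{taulcc}, so that Corollary~\ref{thenk+2} applies at every stage.

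The main obstacle is a circularity that the passage to $\X$ is expressly designed to break. One cannot argue by downward induction on $k$ inside $\C$ alone: the machinery of Lemma~\ref{Q} can lower the truncation level of the ambient localization but not the truncation level of the class of maps, because $L=\tau_{\leq k+1}$ carries a $(k+1)$-truncated map to a map that need not be $k$-truncated (already $\{a,b\}\to\ast$ shows that $\tau_{\leq 0}$ does not send $0$-truncated maps to monomorphisms). Consequently the hypotheses $L(\U)\subseteq\F$ and $G(\F)\subseteq\U$ of Lemma~\ref{Q} cannot be arranged with $\U=\T_{k+1}$ and $\F=\T_k$, and ``$\T_{k+1}$ local in $\C$'' does not by itself deliver ``$\T_k$ local in $\C$''. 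The $\i$-topos $\X$ resolves this precisely because there all the classes $\T_k$ are simultaneously local, so that the single descent step---which lowers the ambient truncation level while keeping the class of maps fixed---is all that is needed. The one genuinely external input is thus the structural fact that an $n$-topos is the truncation of an $\i$-topos; everything else is bookkeeping with truncation indices.
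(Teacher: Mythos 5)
Your argument is correct. The paper gives no proof of Corollary~\ref{k<=n-2} at all: it is stated as an immediate consequence of Corollary~\ref{thenk+2}, the implicit missing ingredient being that $\tau_{\leq k+1}\C$ is a $(k+2)$-topos whenever $\C$ is an $n$-topos and $k\leq n-2$ (a fact the authors presumably intend to quote from [HTT, \S 6.4.5]). You supply exactly this ingredient, by the slight detour of writing $\C\simeq\tau_{\leq n-1}\X$ for an ambient $\i$-topos $\X$, identifying $\tau_{\leq k+1}\C\simeq\tau_{\leq k+1}\X$, and deducing that the latter is a $(k+2)$-topos from Corollary~\ref{ntr=loc} together with Corollary~\ref{thenk+2} applied to $\X$; the only external input is then that every $n$-topos is a truncation of an $\i$-topos. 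This is essentially the intended derivation, and your diagnosis of why a purely internal downward induction via Lemma~\ref{Q} cannot work (truncation lowers the ambient category but does not carry $\T_{k+1}$ into $\T_k$) is a genuinely useful observation. One small caveat: the parenthetical claim that $\T_{-1}$ is ``always local'' is an overreach --- in a general presentable locally cartesian closed $\i$-category the class of all monomorphisms need not satisfy descent --- but this is harmless, since the cases $k=-2,-1$ are already covered by your main argument whenever they lie in the stated range.
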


\begin{proof}
  Every $n$-topos $\X$ arises as the $(n-1)$-truncation of an 
  $\infty$-topos,\footnote{[HTT 6.4.1.5]} say
  $\X=\tau_{\leq n-1}\Y$, where $\Y$ is an $\infty$-topos.
  The truncation functor $L := \tau_{\leq n-1} : \Y 
  \to \X$ is $\K_{n-1}$-quasi-left-exact by Lemma~\ref{K}, 
  and accessible and \loccart\ by Lemma~\ref{taulcc}.
  We have $\K_{n-1} \bot \T_k$ in $\X$ (since the maps in $\K_{n-1}$ are $(n-2)$-connected
  and hence $k$-connected).
  Now we can apply Lemma~\ref{Q} (with $\U=\T_k$ the class of $k$-truncated maps in $\Y$,
  and $\F=\T_k$ be the class of $k$-truncated maps in $\X$)
  to conclude that $\T_k$ is a local class in $\X$.
\end{proof}

\begin{blanko}{Stability properties of local classes.}\label{stabcond}
  As a necessary condition for a univalent family to serve as a universe for a
  type theory, the
  corresponding (bounded) local class should be stable under the type-forming
  operations: dependent sums (lowershriek) and dependent products (lowerstar),
  as well as identity types.  Every local class is stable under pullback, by
  definition.  If a local class $\F$ is closed under composition, then it is
  clearly stable under dependent sums (lowershriek), but only along maps in
  $\F$.
\end{blanko}

\begin{lemma}\label{closedunderPi}
  If $\F$ is the right orthogonal class of a stable factorization system
  then it is closed under dependent products.  Moreover, if
  $\kappa$ is a strongly inaccessible cardinal, then $\F_\kappa$ is
  closed under dependent products along maps in
  $\F_\kappa$.
\end{lemma}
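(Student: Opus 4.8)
The plan is to treat the two assertions separately: the unbounded statement reduces to a clean orthogonality-under-adjunction argument, while the bounded statement layers on a cardinality bookkeeping that is precisely where strong inaccessibility enters.

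First, recall that in a presentable locally cartesian closed $\C$, for any $f:T\to S$ the pullback $f^*:\C_{/S}\to\C_{/T}$ has a right adjoint $f_*$ (the dependent product), and that ``$\F$ closed under dependent products'' should mean: whenever $g:Z\to T$ lies in $\F$, its dependent product $f_*(g):f_*Z\to S$ again lies in $\F$. I would first observe that the factorization system $(\E,\F)$ restricts to each slice $\C_{/T}$, with a morphism of $\C_{/T}$ lying in the left (resp.\ right) class exactly when its underlying map in $\C$ lies in $\E$ (resp.\ $\F$): the factorizations are inherited from $\C$ and the lifting conditions transport verbatim. Next I would check that $f^*$ preserves the left class, since a morphism of $\C_{/S}$ in $\E$ has underlying map in $\E$, and $f^*$ sends it to a base change of that map, which stays in $\E$ because the system is basechange-stable.

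The crucial abstract fact is then: given an adjunction $f^*\dashv f_*$ between $\i$-categories each carrying a factorization system, if the left adjoint $f^*$ preserves the left class, then the right adjoint $f_*$ preserves the right class. This is immediate from the definition of orthogonality: for $e$ in the left class of $\C_{/S}$ and $g$ in $\F$ over $T$, the natural equivalence $\Map_{\C_{/S}}(-,f_*-)\simeq\Map_{\C_{/T}}(f^*-,-)$ turns the lifting condition ``$e\perp f_*(g)$'' into ``$f^*(e)\perp g$'', and the latter holds because $f^*(e)$ is in the left class while $g$ is in the right. To pass from morphisms back to objects I would use that $f_*$, being a right adjoint, preserves terminal objects, so $f_*\id_T\simeq\id_S$; since the structure map of an object $(Z,g)$ of $\C_{/T}$ is the underlying map of the canonical morphism $(Z,g)\to\id_T$, applying $f_*$ exhibits $f_*(g)$ as the underlying map of $f_*(Z,g)\to\id_S$, hence in $\F$. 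This settles the first assertion, with no constraint on the base map $f$; note that locality of $\F$ is not actually used here, only its being the right class of a basechange-stable factorization system.

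For the bounded statement I would invoke the first part to get $f_*(g)\in\F$ and then verify only relative $\kappa$-compactness. Here I would analyze the fibers of $f_*(g)$: over a $\kappa$-compact base change $C\to S$, a Beck--Chevalley identification expresses the pullback of $f_*(g)$ as the dependent product of the pulled-back $g$ along the pulled-back $f$, whose fibers are products of the (relatively $\kappa$-compact, hence $\kappa$-compact) fibers of $g$ indexed by the ($\kappa$-compact, since $f\in\F_\kappa$) fibers of $f$. Because $\kappa$ is strongly inaccessible, $\C^\kappa$ is closed under $\kappa$-small limits, so such a $\kappa$-small product of $\kappa$-compact objects is again $\kappa$-compact; this mirrors exactly the closure of a strong-limit regular cardinal under products. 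Hence $f_*(g)$ is relatively $\kappa$-compact and lies in $\F_\kappa$. The main obstacle is precisely this cardinality control: one must make the Beck--Chevalley computation of the fibers of the dependent product explicit and then invoke closure of $\C^\kappa$ under $\kappa$-small limits for strongly inaccessible $\kappa$ --- the one place where strong inaccessibility rather than mere regularity is genuinely needed, and the reason the base map must be restricted to $\F_\kappa$. The first part, by contrast, is a formal consequence of the adjunction together with the basechange-stability of $\E$.
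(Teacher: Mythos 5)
Your proof is correct and follows essentially the same route as the paper's: the first assertion is exactly the paper's argument (transpose $e\perp p_*f$ to $p^*e\perp f$ across the adjunction and use basechange-stability of the left class), and for the second the paper simply cites [HTT 5.4.2.9] for the preservation of relative $\kappa$-compactness under dependent products when $\kappa$ is strongly inaccessible. Your fiberwise ``product indexed by the fibers of $f$'' picture is only a heuristic in a general presentable locally cartesian closed $\i$-category (the fiber of $p_*f$ is a section space, i.e.\ a limit, not literally a set-indexed product), but you correctly identify the crux --- closure of the $\kappa$-compact objects under $\kappa$-small limits, which is where strong inaccessibility enters --- so this is a gloss on the cited result rather than a different proof.
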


\begin{proof}
  Let $p$ be any map, and suppose $f$ is a map in $\F$.  We need to check that
  $p_*(f)$ belongs to $\F$ too.  For this we need to check $e\bot p_* f$ for all
  maps $e$ in the left class.  But we have $e\bot p_* f$ if and only if $p^* e
  \bot f$, and this last statement is true for all $e$ because $p^* e$ then
  belongs to the left class by stability.  Concerning $\F_\kappa$, it remains
  to observe that if both $f$ and $p$ are relatively $\kappa$-compact, then
  also $p_* f $ is so; this is ensured if $\kappa$  is strongly inaccessible 
  (see [HTT 5.4.2.9]).
\end{proof}

\begin{lemma}\label{closedunderdiag}
  If $\F$ is the right orthogonal class of a factorization system,
  then it is
  closed under taking diagonals: if $f:X\to Y$ belongs to $\F$ then so does
  $\delta_f: X \to X \times_Y X$.
\end{lemma}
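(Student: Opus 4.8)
The plan is to deduce the statement purely from the closure properties enjoyed by the right class of an orthogonal factorization system, without invoking locality of $\F$ at all. Two facts suffice: that $\F$ is stable under base change (which, as noted at the start of Section~\ref{sec:fact}, is automatic for a right orthogonal class), and that $\F$ satisfies the right cancellation property, namely that if $g$ and $g\circ h$ both lie in $\F$ then so does $h$. The latter is exactly the closure property recorded in [HTT 5.2.8.6], which was already used in the proof of Lemma~\ref{Q}.

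First I would form the defining pullback square for $X\times_Y X$, together with its two projections $\pi_1,\pi_2:X\times_Y X\to X$:
\[
\xymatrix{
X\times_Y X \drpullback \ar[r]^-{\pi_2} \ar[d]_{\pi_1} & X \ar[d]^f \\
X \ar[r]_f & Y .
}
\]
Reading this square off, $\pi_1$ is the base change of $f$ along $f$; since $f\in\F$ and $\F$ is stable under base change, we get $\pi_1\in\F$. Next I would observe that, by the universal property of the pullback, the composite $\pi_1\circ\delta_f$ is canonically equivalent to $\id_X$, and hence lies in $\F$ because every factorization system has all equivalences in its right class.

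Now both $\pi_1$ and the composite $\pi_1\circ\delta_f\simeq\id_X$ belong to $\F$, so right cancellation (the cited input [HTT 5.2.8.6]) forces $\delta_f\in\F$, as desired. The only point requiring any care — and what I would flag as the crux — is the validity of the right cancellation property for $\F$ in the $\i$-categorical setting; but this is precisely [HTT 5.2.8.6], so there is no genuine obstacle, and the argument is a short three-step deduction. I would also remark that locality of $\F$ plays no role whatsoever in this argument, so in fact the diagonal of any map in the right class of any basechange-stable factorization system again lies in that class.
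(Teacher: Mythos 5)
Your proof is correct and is essentially the same as the paper's: both establish that the projection $\pi_1:X\times_Y X\to X$ lies in $\F$ by pullback stability and then conclude via right cancellation (the "section of a map in the right class" argument, i.e.\ [HTT 5.2.8.6]) that $\delta_f\in\F$. Your closing observation that locality is not needed is accurate --- the paper happens to cite pullback stability of local classes for the first step, but pullback stability of the right orthogonal class would serve equally well, exactly as you note.
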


\begin{proof}
  If $f:X \to Y$ belongs to $\F$ then so does the (first) projection $X\times_Y
  X \to X$ since it is the pullback of $f$, and right orthogonal
  classes are stable under
  pullback.  But the diagonal is a section of the projection, so since $\F$ is a
  right orthogonal class, the diagonal belongs to $\F$ 
  too.
\end{proof}

\section{\Qi topoi}
\label{sec:quasi}

In this section we study certain large univalent families which exist outside
the realm of $\infty$-topoi or $n$-topoi.  To do so, we introduce a general notion
of \qi topos, which generalizes the classical notion not only by replacing
ordinary topoi by higher topoi, but also by replacing mono by the right
orthogonal class of more general factorization systems.  Although this notion is
interesting in its own right, it is beyond the scope of the present paper to
develop the theory of \qi topoi systematically, so our treatment is somewhat ad
hoc and aims only to provide new examples of univalent families.

\begin{blanko}{From classical quasitopoi to \qi topoi.}
  There are several possible characterizations of (Grothen\-dieck) quasitopoi.  
  An elegant
  systematic treatment was provided recently by Garner and
  Lack~\cite{Garner-Lack:1106.5331}.  As our starting point for generalization to
  the $\infty$-setting, we take the viewpoint of separated presheaves, and proceed
  to relate the notion to locally cartesian localizations, in keeping with our
  general philosophy.
  
  Classically, a (Grothendieck) quasitopos can be described as a full
  subcategory $\Q$ of a (Grothendieck) topos $\P$ consisting of those objects 
  which are separated with respect to a Lawvere--Tierney topology 
  $\tau$ on $\P$. Recall (see, for instance, \cite[Section V.2]{MacLane-Moerdijk}) that an object $X$ of $\P$ is a sheaf for the topology $\tau$ if, given a $\tau$-covering sieve $i:S\to T$ in $\P$,
  the restriction $i^*:\Hom(T,X) \to \Hom(S,X)$ is an isomorphism. The full subcategory $\X$ of $\P$ consisting of the $\tau$-sheaves is again a topos, namely the left-exact localization of $\P$ obtained by applying the $\tau$-sheafification functor.
  If instead we only require that $i^*:\Hom(T,X)\to\Hom(S,X)$ is a monomorphism, then we arrive at the notion of a separated presheaf.
  One can prove that this condition is equivalent to
  asking the unit for the $\tau$-sheafification functor to be a monomorphism.
  The quasitopos $\Q$ is again a localization of $\P$, but this localization is no longer left-exact: in general, it only preserves monomorphisms and pullbacks over local objects.
  
  In the setting of $\infty$-topoi, it is no longer true that every left-exact
  localization arises from a Grothendieck topology (the ones that do are called
  {\em topological localizations}\footnote{\cite[Section~6.2.2]{Lurie:HTT}}),
  but the formulation of separatedness in terms of monomorphic unit works well,
  and leads to a working notion of \qi topos.  However, while the notion of
  monomorphism plays a distinguished role in $1$-topos theory (as the class of
  maps for which there is a classifier), in higher topoi there are more general
  such classes of maps, viz.~the class of $n$-truncated maps (or indeed 
  the class of all maps), and 
  one may envisage a distinct notion of \qi topos for
  each of the factorization systems ($n$-connected, $n$-truncated).
  In fact, one can
  relate the definition to more general stable 
  factorization systems, as we now proceed to do.

  Before coming to the definition, let us remark that
  in the case where $\P$ and $\X$ are both topological localizations of an ambient presheaf $\infty$-topos, with one topology finer than other, and $L:\P \to \X \subset \P$ is the induced left-exact localization, the
  notion of separatedness in terms of coverings makes sense. Furthermore, it is not
  difficult to prove that it is equivalent to being separated in the sense of
  monomorphic unit for sheafification, the essential point in the proof being
  that covering sieves form a filtered system and that filtered colimits
  preserve monomorphisms.  In the abstract setting with reference to a 
  right orthogonal class $\F$, it is also possible to formulate the notion of
  {$\F$-separatedness} in terms of coverings, and again, provided filtered colimits
  preserve the class $\F$, the two notions of $\F$-separatedness will agree.
  This is the case for example when $\F$ is the class of $n$-truncated maps.
\end{blanko}

\begin{blanko}{\Qi topoi.}
    Let $\P$ be an $\infty$-topos, and let $L:\P\to\X\subset \P$ be an
    accessible left-exact localization, with right adjoint (inclusion) $G$ and
    unit $\eta$.  Let $(\E,\F)$ be a stable factorization system in $\P$
    with the property that $GL$ preserves
    $\F$, i.e.~if $f$ belongs to $\F$ then also $GLf$ belongs to $\F$.
  We say that an object $Q\in \P$ is {\em $\F$-separated}
  with respect to $L$ if $\eta_Q: Q\to GLQ$ belongs to $\F$.  Let $\Q \subset \P$
  denote the full subcategory consisting of the $\F$-separated objects.  We define
  an {\em \qi topos} to be a presentable $\infty$-category which arises in this way; 
  that is, as the $\infty$-category of
  $\F$-separated objects in an $\infty$-topos equipped
  with an accessible left-exact localization $L$ and a
  stable factorization system $(\E,\F)$ such that $GL$ preserves $\F$.

  An important example of this situation is the factorization
  system ($n$-connected, $n$-truncated): this is stable by 
  Proposition~\ref{nconnstable},
  and any left-exact localization
  preserves $n$-truncated maps by [HTT~5.5.6.16].
\end{blanko}

The following theorem is an $\infty$-version of Proposition~3.3 and Corollary 3.4
of Garner and Lack~\cite{Garner-Lack:1106.5331},
relativized to more general factorization systems.
Our proof draws upon their arguments.
\begin{theorem}\label{thm:PQE}
  Let $\X$ be an $\infty$-topos, presented as a left-exact localization 
  $L:\P\to\X\subset\P$ of an $\infty$-topos $\P$ equipped with a
  stable factorization system 
  $(\E,\F)$ such that $GL$ preserves $\F$,
  and let $\Q\subset\P$ denote the full subcategory of $\F$-separated objects.
  Then the inclusion functors $\overline{G}:\X\to\Q$ and $G':\Q\to\P$ admit
  left adjoints $\overline{L}:\Q\to\X$ and $L':\P\to\Q$, respectively, such
  that $\overline{L}$ is left-exact and $G'L'$ preserves the factorization
  system $(\E,\F)$ and preserves pullbacks over $L'$-local objects (so in particular 
  $L'$ is locally cartesian).
\end{theorem}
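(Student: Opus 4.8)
The plan is to realise both reflections by factoring units, to read off the exactness and preservation properties from the factorization-system axioms together with left-exactness of $L$, and to extract stable units from a single orthogonality computation.

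\emph{Construction of $L'$ and the adjunction $L'\dashv G'$.} For $P\in\P$ I would factor the unit $\eta_P:P\to GLP$ as $P\xrightarrow{e_P}L'P\xrightarrow{m_P}GLP$ with $e_P\in\E$ and $m_P\in\F$, and take the middle object as $L'P$; since the $(\E,\F)$-factorization is functorial, $L'$ is a functor and $e,m$ are natural. Two points need checking. First, $L'P$ is $\F$-separated: because $L$ preserves the factorization system we have $GLe_P\in\E$ and $GLm_P\in\F$, while $GL\eta_P=GLm_P\circ GLe_P$ is an equivalence (as $L\eta_P$ is and $G$ is fully faithful); a composite in $\F$ after $\E$ that is an equivalence forces both factors to be equivalences, so $GLe_P$, hence $Le_P$, is an equivalence and $e_P$ is inverted by $L$. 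Therefore precomposition with $e_P$ is an equivalence $\Map(L'P,GL(L'P))\to\Map(P,GL(L'P))$ into the local object $GL(L'P)$, and since naturality of $\eta$ gives $\eta_{L'P}\circ e_P\simeq(GLe_P\circ m_P)\circ e_P$, I conclude $\eta_{L'P}\simeq GLe_P\circ m_P\in\F$. Thus $L'P\in\Q$. Second, $e_P$ is the reflection unit: for separated $Q'$ and a map $P\to Q'$, the square comparing $e_P\in\E$ with $\eta_{Q'}\in\F$ has a unique diagonal filler by orthogonality, yielding the natural equivalence $\Map_\P(L'P,Q')\simeq\Map_\P(P,Q')$.

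\emph{Construction and left-exactness of $\overline L$.} Local objects are $\F$-separated (their units are equivalences, hence in $\F$), so $\X\subseteq\Q\subseteq\P$ are full subcategories and $\overline G$ is fully faithful. As $LQ\in\X\subseteq\Q$, the reflection $L$ factors through $\Q$, and restricting $L\dashv G$ gives $\overline L:=L|_\Q$ left adjoint to $\overline G$, with $\Map_\X(\overline LQ,W)\simeq\Map_\P(Q,GW)=\Map_\Q(Q,\overline GW)$. For left-exactness I would first note that $\Q$ is closed under finite limits in $\P$: for a finite diagram $Q_\bullet$ in $\Q$, the comparison $\eta_{\lim Q_\bullet}$ is identified with $\lim\eta_{Q_\bullet}$ (because $L$ and $G$ preserve limits), which lies in $\F$ since right classes are closed under limits in the arrow category. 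Hence finite limits in both $\Q$ and $\X$ are computed in $\P$, and $\overline LQ=LQ$ with $L$ left-exact forces $\overline L$ to preserve finite limits.

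\emph{Preservation of $(\E,\F)$ and stable units.} Preservation follows from functoriality of the factorization via the naturality squares relating $f$, $L'f$ and $GLf$ through $e_\bullet,m_\bullet$: if $f\in\F$ then $m_B\circ L'f=GLf\circ m_A\in\F$, and cancelling $m_B\in\F$ on the left gives $L'f\in\F$; dually, if $f\in\E$ then $L'f\circ e_A=e_B\circ f\in\E$, and cancelling $e_A\in\E$ gives $L'f\in\E$, both being the standard cancellation properties for a factorization system. For stable units I must show every pullback $s:A\to A'$ of a unit component $e_P$ is inverted by $L'$. Since $\E$ is basechange-stable, $s\in\E$; since $L$ is left-exact and inverts $e_P$, it preserves the pullback square and inverts $s$. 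For any separated $Q'$, orthogonality $s\bot\eta_{Q'}$ exhibits the square of mapping spaces built from $s$ and $\eta_{Q'}:Q'\to GLQ'$ as a pullback; the edge given by precomposition with $s$ into the local object $GLQ'$ is an equivalence (because $GLQ'$ is local and $L$ inverts $s$), so the parallel edge $\Map(A',Q')\to\Map(A,Q')$ is an equivalence as well. As $Q'$ ranges over $\Q$, this says $L's$ is an equivalence, so $L'$ has stable units and is therefore locally cartesian by the preceding lemma.

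\emph{Main obstacle.} I expect the crux to be verifying that the factored object $L'P$ is genuinely $\F$-separated, since it is here that all three hypotheses—left-exactness of $L$, preservation of $(\E,\F)$, and the orthogonality axioms—must be combined, and the identification $\eta_{L'P}\simeq GLe_P\circ m_P$ through the mapping-space argument into the local object $GL(L'P)$ is the delicate step that makes $L'$ well defined. The stable-units computation is the other substantive point, but once the reflection $L'$ is in hand it reduces to the single orthogonality-as-pullback argument above.
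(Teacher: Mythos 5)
Your proof is correct, and most of it tracks the paper's argument closely: the construction of $L'$ by $(\E,\F)$-factoring the unit $\eta_P$, the verification that $L'P$ is separated by showing $Le_P$ is invertible and identifying $\eta_{L'P}$ with $GLe_P\circ m_P$, the orthogonality square giving $\Map_\P(L'P,Q')\simeq\Map_\P(P,Q')$, and the cancellation argument for preservation of $(\E,\F)$ are exactly the paper's Steps 1 and 2. Where you genuinely diverge is stable units. The paper gets there in three stages: it first shows $L'$ preserves pullbacks along components of $\eta$ (placing a comparison map in both $\E$ and $\F$), then shows maps of the form $X\times_{L'X}A\to X\times_{LX}A$ lie in $\F$ (via the diagonal of $\lambda$ and the closure of right classes under diagonals), and finally combines these in a diagram chase to place $L'q$ in $\E\cap\F$ for $q$ a pullback of a unit component. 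You instead isolate the single lemma that any map of $\E$ inverted by $L$ is inverted by $L'$ --- the same orthogonality-as-pullback computation you already used for the adjunction, now applied to the pulled-back map $s$ in place of $e_P$ --- and observe that pullbacks of $e_P$ lie in $\E$ by basechange stability and are inverted by $L$ by left-exactness. This is shorter and cleaner; what the paper's longer route additionally records is that $L'$ preserves pullbacks along components of $\eta$ (its Step 3), a fact your argument bypasses but which is not needed for the statement. You also spell out the left-exactness of $\overline{L}$ (closure of $\Q$ under finite limits, using that $\F$ is closed under limits in the arrow category and that $GL$ preserves finite limits), which the paper leaves as ``easy to see'' in its Step 6.
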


\begin{proof}
  Step 1: {\em $G'$ has a left adjoint $L'$.} 
  To establish that $G'$ has a left adjoint, we must show that 
  for every $X\in\P$ the under $\infty$-category $\Q_{X/}$ has an initial 
  object (by the dual of Corollary~\ref{maclanelemma}).
  The left adjoint $L'$ will then be given on objects by sending $X$ to the 
  target of this initial object.
  Consider the $(\E,\F)$-factorization
  of  $\eta: X \to G L X$:
\begin{equation}\label{eta'triangle}
  \xymatrix{
X \ar[rr]^{\eta}\ar[rd]_{\eta'} && G L X \\
&X'  .\ar[ru]_\lambda
}\end{equation}
Since $\lambda$ belongs to $\F$, the object $X'$ is in $\Q$.
Indeed, in the naturality square
\begin{equation}\label{eta'trianglebis}
  \xymatrix{
 G L X\ar[r]^-{\simeq} & GLGLX \\
X' \ar[u]^\lambda \ar[r]_-{\eta_{X'}}&GLX' \ar[u]_{GL\lambda}
}\end{equation}
both $\lambda$ and $GL\lambda$ are in $\F$ (since $GL$ preserves $\F$),
so by the left-cancellation property of right orthogonal classes, also 
$\eta_{X'}$ is in $\F$.
We claim that $\eta':X \to X'$ is an initial object of $\Q_{X/}$.
  Let $f:X\to Y'$ be another object of $\Q_{X/}$.
We must show that the space $\Map_{X/}(\eta',f)$ is contractible.
This space is the fiber over $f$ of the map
  \[
  \Map(X',Y')\too\Map(X,Y')
  \]
  given by precomposition by $\eta'$, so we are done if we can
  show that this map is an equivalence.
Now $\eta':X\to X'$ belongs to $\E$
by construction, and $\eta_{Y'}: Y'\to GLY'$
belongs to $\F$ because $Y'\in \Q$.  This orthogonality relation, 
$\eta' \bot \eta_{Y'}$, is expressed by the 
upper square being a pullback in the diagram
\begin{equation}\label{fact-pb}\xymatrix{
\Map_\P(X', Y')\ar[r]
\ar[d]
\drpullback & \Map_\P(X, Y') 
\ar[d]
\\
 \Map_\P(X', G L Y')\ar[d]_\simeq\ar[r]
& \Map_\P(X, G L Y')\ar[d]^\simeq \\
 \Map_\X(L X', L Y')\ar[r]
& \Map_\X(L X, L Y')
}\end{equation}
where the horizontal maps are pre-composition with $\eta'$ and the
vertical maps of the top square are post-composition with $\eta_{Y'}$.
(The bottom square is just adjunction.)
We now claim that
$L\eta' : LX \to L X'$ is an equivalence.
Indeed, apply $L$ to the triangle \eqref{eta'triangle}
to obtain the following diagram in $\X$:
$$\xymatrix{
LX \ar[rr]^{\simeq}\ar[rd]_{L\eta'} && LG L X \\
&L X'  . \ar[ru]_{L\lambda}
}$$
Note that the class $L\F$ is contained in $(L\E)^\bot$: this is an easy
consequence of the assumption that $GL$ preserves $\F$.  
So in the triangle, $L\lambda$ belongs to 
$(L\E)^\bot$.  It follows\footnote{\label{leftcanc}by the left-cancellation property of 
right orthogonal classes [HTT 5.2.8.6 (3)]} that also 
$L\eta'$ belongs to $(L\E)^\bot$.  But clearly also $L\eta'$ belongs to 
$L\E$,
so altogether it must be an equivalence.
It follows that the bottom map in \eqref{fact-pb} is an equivalence.
Hence the other horizontal maps are equivalences too.

Step 2: 
{\em $G'L'$ preserves both the classes $\E$ and $\F$.}
Indeed, for a map
$f: X \to Y$ in $\P$, consider the diagram
$$
\xymatrix{
X \ar[d]_f \ar[r]^-{\eta'} & G'L'X \ar[d]_{G'L'f} \ar[r]^\lambda & 
GLX \ar[d]^{GLf} \\
Y \ar[r]_-{\eta'} & G'L'Y \ar[r]_\lambda & GLY .
}
$$
If $f$ is in $\E$, then it follows\footnote{\label{rightcanc}by
the right-cancellation property of 
left orthogonal classes [HTT 5.2.8.6 (4)]}
that $G'L'f$ is in $\E$ because so are
the two instances of $\eta'$. On the other hand if $f$ is in $\F$, then
by assumption on $L$, also $GLf$ is in $\F$.  But so are
the two instances of $\lambda$, so we conclude that $G'L'f$ is in 
$\F$ too.\footref{leftcanc}

In the remaining steps, we suppress the inclusion functors $G$ and 
$G'$:
all the arguments take place in $\P$.

Step 3: {\em $L'$ preserves pullbacks along components of $\eta$.}
Consider a pullback diagram in $\P$
$$\xymatrix{
X \!\underset{LX}\times\! A \ar[d] \ar[r] \drpullback & A \ar[d] \\
X \ar[r]_{\eta} & LX .
}$$
Upon applying $L'$ and $L$, we get this commutative diagram in $\P$:
$$
\xymatrix{
X \!\underset{LX}\times\! A \ar[d]_-= \ar[rr]^-{\eta'} 
&& L'\big(X \!\underset{LX}\times\! A\big) \ar[d]_-{\pi'} 
\ar[rr]^-\lambda && L\big(X \!\underset{LX}\times\! A\big) \ar[d]^-\pi \\
X \!\underset{LX}\times\! A \ar[r]_-{\eta'\times\id} 
&L'X \!\underset{LX}\times\! A \ar[r]_-{\id\times\eta'} 
& L'X \!\underset{LX}\times\! L'A \ar[r]_-{\lambda\times\id} 
& LX \!\underset{LX}\times\! L'A\ar[r]_-{\id\times\lambda} 
& LX \!\underset{LX}\times\! LA .
}$$
    Here $\pi$ and $\pi'$ are the canonical comparison maps; we
    must show that $\pi'$ is an equivalence.  From the
    right-hand square we conclude that $\pi'$ is in $\F$:
    indeed, any $\lambda$-map is in $\F$, and so are pullbacks
    of $\lambda$-maps, such as the bottom two maps in that
    square.  But $\pi$ is an equivalence since $L$ is
    left-exact, so the remaining side $\pi'$ must be in $\F$
    too.\footref{leftcanc}  From the left-hand square we conclude that $\pi'$ is
    in $\E$: indeed, $\eta'$-maps belong to $\E$, and by
    basechange stability of $\E$, so do pullbacks of
    $\eta'$-maps, such as the bottom two maps in that square;
    hence the remaining side $\pi'$ must be in $\E$ too.\footref{rightcanc}
    Altogether $\pi'$ is therefore an equivalence, as desired.

Step 4:  {\em
Any map of the form $X \times_{L'X} A \to
X \times_{LX} A$ is in $\F$.}
Indeed, it sits in the pullback diagram
$$\xymatrix{
X \times_{L'X} A \ar[r] \ar[d] \drpullback & L'X \times_{L'X} L'X 
\ar[d]^{\delta_\lambda} \\
X \times_{LX} A \ar[r] & L'X \times_{LX} L'X \,,
}
$$
where the right-hand vertical map is equivalent to the diagonal of
$\lambda:  L'X  \to LX$.  Since $\lambda$ belongs to $\F$, by Lemma~\ref{closedunderdiag}
also $\delta_\lambda$ belongs to $\F$, and hence also its pullback, as claimed.

Step 5:
{\em $L'$ preserves pullbacks over $L'$-local objects.}
By Lemma~\ref{stableunits} below, it is enough to prove that $L'$
preserves pullbacks
along components of $\eta'$ (i.e.~$L'$ has ``stable 
units'').
Applying $L'$ to the pullback diagram 
$$\xymatrix{
P \ar[d]_p \ar[r]^q \drpullback & A \ar[d]^u \\
X \ar[r]_{\eta'} & L'X 
}$$
gives
$$
\xymatrix{
L'P \ar@/^1.6pc/[rr]^{L'q}\ar[rd]_{L'p} \ar[r]^-{a} & L'X \times_{L'L'X} L'A 
\drpullback \ar[d] 
\ar[r]_-\simeq & L'A \ar[d]^{L'u} \\
&L'X \ar[r]_-{L'\eta'}^-\simeq & L'L'X .
}$$
The map $a$ also sits in the diagram
$$\xymatrix{
L'P \ar[r]^-a \ar[d] & L'X \times_{L'L'X} L'A \ar[d] \\
L'(X \times_{LX} A) \ar[r]_\simeq & L'X\times_{LX} L'A  .
}$$
Here the right-hand vertical map is an example of the situation in Step~4, 
so it belongs to $\F$.
The left-hand vertical map is also such an example but with $L'$
applied to it, and since $G'L'$ preserves the class $\F$ by Step~2, 
it is again in $\F$.
The bottom map is an equivalence since $L'$ preserves pullbacks along
components of $\eta$, by Step 3.
It follows from this that also $a$ is in $\F$.
Hence $L'q$ is in $\F$.
On the other hand, since $\eta'$ is in $\E$, by pullback stability of this class,
also $q$ is in $\E$, and since $G'L'$
preserves the class $\E$ by Step 2, also $L'q$ is in $\E$, so
altogether $L'q$ is an equivalence.  Hence $a$ is an equivalence, which is to say
that $L'$ preserves pullbacks along $\eta'$.

Step 6: It is easy to see that $L$ restricts to a functor $\overline L:\Q\to\X$ 
left adjoint to $\overline G:\X\to\Q$.
\end{proof}

\begin{blanko}{Stable units.}
  The condition of preserving pullbacks over local objects is well-known in the literature under the name ``stable units''.
  A localization $L:\P\to\C\subset\P$ is said to have {\em stable 
  units}\footnote{The terminology (in the ordinary-category case) is due to 
  Cassidy--H\'ebert--Kelly~\cite{Cassidy-Hebert-Kelly}.}
  if every pullback of any unit component is inverted by $L$. 
  That is, in any pullback diagram in $\P$ of the form
  $$
  \xymatrix{
  P \drpullback\ar[d]\ar[r]^{f^*(\eta_X)} & A\ar[d]^f \\
  X \ar[r]_{\eta_X} & LX,}$$
  $L(f^*(\eta_X))$ is an equivalence.  Equivalently, $L$
  preserves basechange along unit components.
\end{blanko}

\begin{lemma}\label{stableunits}
  If a localization $L:\P\to\C\subset\P$ has stable units then it preserves
  pullbacks over any object in $\C$ (and conversely).  In particular,
  a localization with stable units is locally cartesian.
\end{lemma}

\begin{proof}
  The statement is that any pullback square in $\P$ of the form
  $$
  \xymatrix{
  P \ar[d]\ar[r]\drpullback & A \ar[d]^-f\\
  B \ar[r]_-g& LX
  }$$
  is preserved by $L$.  But this square decomposes into four pullback squares 
  like this:
  $$
  \xymatrix{
  P \ar[d]\ar[r]\drpullback &\cdot \ar[d]\ar[r]\drpullback & A \ar[d]^{\eta_A}\\
  \cdot \ar[d]\ar[r]\drpullback &\cdot\drpullback \ar[d]\ar[r] & LA \ar[d]^{Lf}\\
  B \ar[r]_{\eta_B}&LB \ar[r]_{Lg}& LX \,.
  }
  $$
  The right-hand bottom pullback square is already inside $\C$ so it is 
  clearly preserved by $L$;
  the other squares are pullbacks along units, so they are also preserved by $L$, by 
  assumption.
\end{proof}

\begin{blanko}{Univalence in \qi topoi.}
  {Theorem~\ref{thm:PQE}} says that any \qi topos arises as an accessible localization
  which preserves pullbacks over local objects and preserves
  the class $\F$.  In the classical case,
  with the (epi,mono) factorization system, this is one of the possible
  characterizations of (Grothendieck) quasitopoi \cite{Garner-Lack:1106.5331}.
  In particular, an \qi topos is presentable and locally cartesian closed. 
  
  The localization $\overline L : \Q \to \X \subset \Q$ now
  satisfies the conditions of Theorem~\ref{thm:induce-along-R}.  Hence:
  
\begin{corollary}
  Let $\Q$ be an \qi topos containing an $\infty$-topos $\X$ as in
  Theorem~\ref{thm:PQE}.  Then any univalent family in $\X$ is also a univalent
  family in $\Q$.
  \qed
\end{corollary}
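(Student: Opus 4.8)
The plan is to deduce this immediately from Theorem~\ref{thm:induce-along-R}. The preceding remark has already set up the relevant localization: by Theorem~\ref{thm:PQE}, the embedded $\i$-topos $\X$ is a reflective subcategory of the \qi topos $\Q$, the reflection being the accessible localization $\overline{L}:\Q\to\X\subseteq\Q$ with right adjoint the inclusion $\overline{G}:\X\to\Q$. Theorem~\ref{thm:induce-along-R} asserts precisely that, for an accessible locally cartesian localization of a presentable locally cartesian closed $\i$-category, univalence in the local subcategory and univalence in the ambient category coincide. So the corollary reduces to checking that $\overline{L}$ is such a localization, after which the ``only if'' direction of that theorem furnishes the claim.

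First I would record that $\Q$ is a presentable locally cartesian closed $\i$-category: this is noted in the remark, and follows because $\Q$ is the target of the localization $L':\P\to\Q$, which by Theorem~\ref{thm:PQE} has stable units and is hence locally cartesian; applying the earlier structural result that an accessible \loccart\ localization of a presentable locally cartesian closed $\i$-category is again presentable and locally cartesian closed, $\Q$ inherits these properties from the presheaf $\i$-category $\P$. Next I would verify the two hypotheses on $\overline{L}$. Accessibility is part of the construction in Theorem~\ref{thm:PQE}. For the locally cartesian condition I would use that Theorem~\ref{thm:PQE} asserts $\overline{L}$ to be left-exact; since a left-exact functor preserves all finite limits, in particular all pullbacks, it in particular commutes with basechange between local objects, which is the defining condition for being \loccart\ (equivalently, criterion (2) of Proposition~\ref{loccartstars} holds automatically).

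With both hypotheses in place I would simply invoke Theorem~\ref{thm:induce-along-R} for the localization $\overline{L}:\Q\to\X\subseteq\Q$: a map $p:X\to S$ is univalent in $\X$ if and only if it is univalent in $\Q$, and the forward implication is exactly the assertion of the corollary. There is essentially no obstacle at this stage; all of the genuine work has been discharged earlier, in establishing that $\overline{L}$ is a left-exact localization (Theorem~\ref{thm:PQE}) and that univalence transfers along locally cartesian localizations (Theorem~\ref{thm:induce-along-R}). The only point demanding a moment's attention is the role assignment in applying Theorem~\ref{thm:induce-along-R}: here $\Q$ plays the role of the ambient presentable locally cartesian closed category, while the $\i$-topos $\X$ plays the role of its reflective localization.
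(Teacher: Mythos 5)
Your proposal is correct and follows the paper's own route exactly: the paper likewise disposes of the corollary by observing that $\overline{L}:\Q\to\X\subseteq\Q$ satisfies the hypotheses of Theorem~\ref{thm:induce-along-R}, with $\Q$ presentable and locally cartesian closed because $L'$ has stable units and hence is a locally cartesian localization of the presheaf $\i$-topos. Your extra remarks (left-exactness of $\overline{L}$ implying the locally cartesian condition, and the role assignment of $\Q$ as ambient category and $\X$ as its reflective localization) merely make explicit what the paper leaves implicit.
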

  In particular,
  since $\X$ is an $\infty$-topos, for each regular cardinal $\kappa$ there is a
  universal (and hence univalent) family classifying all $\kappa$-compact maps, which
  therefore also constitutes a univalent family in $\Q$, although it no longer 
  classifies all maps.  The maps in $\Q$ obtained as pullbacks of
  this univalent family form a bounded local class by 
  Proposition~\ref{prop:univalent=localclass}.
  We finish this section by showing that this bounded local class is the 
  right-orthogonal class 
  of a factorization system.\footnote{following a suggestion by Mike Shulman}
  For this we first need the following general lemma.
\end{blanko}

\begin{lemma}\label{lem:Gq=B}
  Let $L: \Q\to \X\overset G \subset \Q$ be a localization with stable units,
  and let $(\E,\F)$ be a factorization system on $\X$.  Then there is induced
  a factorization system $(\A,\B)$ on $\Q$ in which $\A=L^{-1}\E$ and $\B$
  is the class of pullbacks of maps in $\F$. If $L$ is left-exact, and if
  $(\E,\F)$ is stable, then so is $(\A,\B)$.
\end{lemma}

\begin{proof}
  It is clear that the two classes $\A$ and $\B$ are closed under equivalences.
  For orthogonality, suppose given a square in $\Q$
  $$\xymatrix{
     \cdot \ar[r]\ar[d]_a & \cdot \ar[d]^b \\
     \cdot \ar[r] & \cdot
  }$$
  with $a\in \A$ and $b\in \B$.  The space of fillers for this square is
  equivalent to the space of fillers for the rectangle
  $$\xymatrix{
     \cdot \ar[r]\ar[d]_a & \cdot \ar[d]_b \ar[r] \drpullback & \cdot \ar[d]^{Gf}\\
     \cdot \ar[r] & \cdot \ar[r] & \cdot
  }$$
  expressing $b$ as a pullback of a map from $\F$, and by adjointness this space
  is contractible since $La \bot f$.  Finally we need to establish that every 
  arrow $q$ admits an $(\A,\B)$-factorization.  Just pull back the $(\E,\F)$-factorization
  of $Lq$:
  $$\xymatrix{
     \cdot \ar[rr]^q\ar[dd]_{\eta} \ar[rd]_a&& \cdot \ar[dd]^{\eta} \\
     & \cdot \ar[ru]_b \ar[dd]  \drpullback & \\
     \cdot \ar[rr]^(.27){GLq} \ar[rd]_{Ge}&& \cdot \\
     & \cdot \ar[ru]_{Gf} &
  }$$
  Now by construction, $b\in \B$.  To see that $a\in \A$, apply $L$: since $L$
  has stable units, it preserves pullbacks along units, so the middle vertical
  maps becomes an equivalence, and altogether therefore $La \in \E$.
  
  Finally, if $L$ preserves pullbacks, it is clear that the class $\A$ is stable
  under basechange.
\end{proof}

\begin{corollary}
  {In the situation of Lemma~\ref{lem:Gq=B},} 
  if $p:X\to S$ is a univalent family in $\X$ coming from a local class $\F$
  that happens to be the right orthogonal class of a stable factorization 
  system on $\X$, then the associated univalent family $Gp$ in $\Q$ comes from
  the class $\B$, which for this reason is local.
\end{corollary}

\begin{proof}
  Indeed, the tautological description of the class classified by $Gp$ is
  that it is the class of pullbacks of $Gp$.  Except for the question of bounds,
  this is precisely the description of the class $\B$.
\end{proof}

In our case, $\X$ is the small $\infty$-topos, and $\Q$ is the $\infty$-quasitopos.
We have seen that the general object classifier in $\X$ is still a univalent 
family  in 
$\Q$, and tautologically it classifies maps that can be obtained as pullbacks
of it.  In $\X$ the corresponding local class is the class 
of all maps, $\F= \text{(all)}$, and the corresponding local class in $\Q$ is $\B$,
the class right-orthogonal to the class of local equivalences.
Since the localization $\Q \to \X$ is left-exact, $(\A,\B)$ is a stable
factorization system in this case.  If $\Q$ were an $\infty$-topos, we would know
from Theorem~\ref{M=local} that $\B$ is a local class.  In the present case,
$\Q$ is not an $\infty$-topos, but the fact that the factorization system is
induced from an $\infty$-topos makes the conclusion valid anyway: since the 
right-orthogonal class $\B$ is the class of pullbacks of a univalent family, we know it is local.

The situation  described also covers the classical case: in the classical case, $\Q$
is a quasitopos and $\X$ is the subtopos of {\em coarse objects}, {i.e.~the 
objects right orthogonal to all morphisms that are simultaneously epi and 
mono}.
In this case, the strong-subobject classifier in $\Q$
is actually coarse, hence is induced from $\X$ as in the general situation we 
have
described.  Equivalently, in terms of factorization systems, the classical
localization $L : \Q\to \X \subset \Q$ preserves and detects epis; hence to
the class right-orthogonal to the epis in $\X$ corresponds the class right-orthogonal
to the epis in $\Q$.  These are the class of all monos in the topos $\X$ and the 
class of strong monos in the quasitopos $\Q$.

\section{Bundles and connected univalent families}
\label{sec:bundles}

In this section, we study the univalent family $p:X\to S$ associated to a given object $F$ of an $\infty$-topos $\X$.
Specifically, we show that the universal $F$-bundle over $B\underline{\Aut}(F)$ is univalent; these provide examples of univalent families with connected base (unlike the universal univalent families, which have many connected components).
Finally, we consider some sporadic examples, outside the realm of $\infty$-topoi.
\begin{blanko}{Bundles and automorphisms.}
Recall that an $F$-bundle (or a bundle with fiber $F$) in an $\infty$-topos $\X$
is a map $X\to S$ for which 
there exist
an effective epi $T \to S$ such that the pullback $Y=X\times_S T \to T$ is equivalent (as an object over $T$) to the trivial $F$-bundle, the projection $F \times T \to T$.
Also recall that $F$ defines a sheaf $\underline{\Aut}(F)$ of automorphisms of $F$, whose $T$-points is the space $\Aut_{/T}(F\times T)$ of automorphisms of $F\times T$ over $T$.
Finally, if $X$ is an object of $\X$ equipped with a (right) action $G\to\underline{\Aut}(X)$ by a group object $G$ of $\X$, we write $X/G$ for the quotient of this action, given by the geometric realization of the simplicial object which in degree $n$ is $X\times G^n$.
\end{blanko}

\begin{lemma}\label{Fbundles}
  For $F$ an object of an $\infty$-topos,
  the class of $F$-bundles is a bounded local class,
  and the corresponding univalent family (the universal  
  $F$-bundle)
  is $$F/\underline{\Aut}(F) \too */\underline{\Aut}(F) =
B\underline{\Aut}(F).$$
\end{lemma}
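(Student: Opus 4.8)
The plan is to exhibit the map $F/\underline{\Aut}(F)\to B\underline{\Aut}(F)$ as a universal family and to show that $F$-bundles form a local class, whence Lemma~\ref{lemma:blc} (or Proposition~\ref{prop:univalent=localclass}) delivers univalence and boundedness. First I would recall that in an $\i$-topos the terminal-object slice $\X_{/*}\simeq\X$ lets us identify $B\underline{\Aut}(F)$ with the classifying object whose $T$-points are $F$-bundles over $T$: by the very definition of $\underline{\Aut}(F)$ as a group object acting on $F$, the quotient $*/\underline{\Aut}(F)=B\underline{\Aut}(F)$ represents the sheaf of $\underline{\Aut}(F)$-torsors, and an $F$-bundle over $T$ is precisely such a torsor together with its associated $F$-fibration. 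Concretely, the bar construction gives a simplicial object in degree $n$ equal to $F\times\underline{\Aut}(F)^n$ (resp.\ $\underline{\Aut}(F)^n$), and its geometric realization produces the projection displayed in the statement; the key input is that effective epimorphisms and groupoid objects are well-behaved in an $\i$-topos (descent), so that this projection really does classify the locally trivial $F$-bundles.

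The main work is to verify that the class $\F$ of $F$-bundles is a local class in the sense of~[HTT 6.1.3.8]. I would check the two defining conditions: stability under basechange is immediate, since pulling back a locally trivial map along any $g:T'\to T$ produces a locally trivial map (pull back the trivializing effective epi as well). For the descent/sheaf condition, I would show that the associated sheaf $F:\X^\op\to\widehat{\Gpd}_\i$, whose $T$-points form the groupoid of $F$-bundles over $T$, preserves small limits. The cleanest route is to observe that being an $F$-bundle is a condition that can be tested locally: given a collection of maps $\{X_i\to S_i\}$ glued along an effective-epi cover, the glued map is again an $F$-bundle because local triviality can be checked after passing to a common refinement of the trivializing covers. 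Here I would lean on the descent theorem for $\i$-topoi, exactly as in Lurie's proof that the class of all maps is local, and then cut down to the subclass of locally $F$-trivial maps.

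Once locality is established, boundedness is automatic from Lemma~\ref{lemma:blc}: since every $F$-bundle has fibers equivalent to $F$, any regular cardinal $\kappa$ bounding the size of $F$ bounds all fibers, so $\F=\F_\kappa$ is a bounded local class. The representing object is then forced, and I would identify it with $B\underline{\Aut}(F)$ by comparing universal families: the tautological $F$-bundle $F/\underline{\Aut}(F)\to B\underline{\Aut}(F)$ is an $F$-bundle (its pullback along the effective epi $*\to B\underline{\Aut}(F)$ is the trivial bundle $F\to *$), and it is terminal in $\Oooh_\X^{(\F)}$ because any $F$-bundle over $T$ arises, by its defining torsor, from a unique map $T\to B\underline{\Aut}(F)$ up to contractible choice. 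Invoking the equivalence of Lemma~\ref{lemma:blc} then shows this map is the universal family, and the correspondence of Corollary~\ref{uni=uni} makes it univalent.

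The step I expect to be the genuine obstacle is the descent verification that $\F$ is closed under the gluing in~[HTT 6.2.3.14] / preserves limits: all maps form a local class only in an $\i$-topos, and one must argue that the \emph{subclass} of $F$-bundles is still closed under the relevant colimits-of-cartesian-diagrams. The delicate point is checking that local triviality is preserved under gluing along an effective epi, i.e.\ that if a map becomes an $F$-bundle after base change along an effective epimorphism, it was already an $F$-bundle. This is essentially the statement that $F$-bundles satisfy descent, and I would reduce it to the effectivity of groupoid objects and the fact that $\underline{\Aut}(F)$-torsors form a stack; that reduction, rather than the identification of the classifying object, is where the real content lies.
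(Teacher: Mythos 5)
Your proposal is correct and follows essentially the same route as the paper: locality of the class of $F$-bundles via the criterion of [HTT 6.2.3.14], boundedness from the fact that all fibers are equivalent to $F$, and identification of the classifier by observing that local triviality gives an effective epimorphism $*\to\B_F$ whose \v{C}ech nerve is the bar construction on $\underline{\Aut}(F)$. The one step you flag as the ``genuine obstacle''---descent of local triviality along an effective epi---is handled in the paper by the simple observation that the composite of the two effective epis (the cover and the trivializing cover of the pulled-back bundle) trivializes the original map, so no appeal to the full stack of torsors is needed there.
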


\begin{proof}
  We apply the criterion of \cite[Proposition~6.2.3.14]{Lurie:HTT}.
  It is clear that $F$-bundles are stable under pullbacks and sums, 
  so it remains to check that
  if 
  $$\xymatrix{
  X' \drpullback \ar[r] \ar[d]_{f'} & X \ar[d]^f\\
  Y' \ar[r]_e & Y}
  $$
  is a pullback along an effective epi $e$,
  and if $f'$ is an $F$-bundle, then also $f$ is an $F$-bundle.
  But
  to say that $f'$ is an $F$-bundle 
  means that there exists an effective epi $Y''\to Y'$ that
  trivializes it.  That is to say that the pullback of $f'$ to $Y''$
  is a trivial $F$-bundle. But then the composite effective epi $Y'' \to Y' 
  \to Y$ also trivializes $f$, which is therefore an $F$-bundle.
  
  To check that $F/\underline{\Aut}(F) \too */\underline{\Aut}(F)$ is 
  the universal family for $F$-bundles, it suffices to show that the 
  sheaf $\B_F$ which associates to $S$ the space of $F$-bundles $X\to S$
  is represented by $B\underline{\Aut}(F)$.
  Since any $F$-bundle is locally trivial, we conclude that there exists
  an effective epi $*\to\B_F$.
  Moreover, the pullback $*\times_{\B_F} *$ is equivalent to the group 
  object $\underline{\Aut}(F)$ of automorphisms of the trivial $F$-bundle $F\to *$.
  Hence $\B_F$ is equivalent to the quotient of the action of 
  $\underline{\Aut}(F)$ on $*$, the definition of $B\underline{\Aut}(F)$.
\end{proof}

\begin{blanko}{Sections and pointed objects.}
  Typically, given a family $p:X\to S$ and a map $f:T\to S$, the pullbacks
  $f^*p:f^*X\to T$ do not admit sections.
They do, however, if $f$ happens to factor through $p$ itself.
In particular, the pullback $p^*p:p^*X\to X$, which is none other 
than the projection $\pi:X\times_S X\to X$, admits a diagonal section
$\delta:X\to X\times_S X$.
It follows that we have a commutative square
\begin{equation}\label{square}
\xymatrix{
\Map(T,X)\ar[r]\ar[d] & \C_{/T\ast}^{\eq}\ar[d]\\
\Map(T,S)\ar[r] & \C_{/T}^{\eq}}
\end{equation}
in which the upper right-hand corner is the (maximal subgroupoid of) 
the $\infty$-category of objects over 
and under $T$, i.e.~the $\infty$-category $(\C_{/T})^{\eq}_{\id_{T}/}$ 
of pointed objects of $\C_{/T}$, and the top horizontal map
sends $f:T\to S$ to the pullback of $X\stackrel\delta\to X\times_S X 
\stackrel \pi\to X$ along $f$.
\end{blanko}

\begin{proposition}\label{section}
Let $\C$ be a locally cartesian closed $\infty$-category and let $p:X\to S$ 
be a univalent family in $\C$.
Then, for any object $T$ of $\C$, the map
\[
\Map(T,X)\too\C_{/T*}^{\eq}
\]
described above
is a monomorphism.
\end{proposition}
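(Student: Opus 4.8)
The plan is to reduce the statement to univalence of $p$ via criterion (2) of Proposition~\ref{prop:univalent=localclass}, by comparing the map $\Map(T,X)\to\C_{/T*}^{\eq}$ with the map $\Map(T,S)\to\C_{/T}^{\eq}$ that we already know to be a monomorphism. The natural vehicle is the commutative square \eqref{square} from the preceding paragraph, whose lower horizontal map is exactly the monomorphism furnished by univalence. If I can show that this square is a pullback, then since the class of monomorphisms is stable under basechange, monicity of the bottom map will transfer to the top map, giving the result.

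So the first step would be to argue that \eqref{square} is a pullback square (equivalently, that the induced map on fibers is an equivalence). Fix a map $g:T\to X$ in the upper-left corner; it projects to $f=p\circ g:T\to S$ in the lower-left corner. The fiber over $f$ of the right-hand vertical map $\C_{/T*}^{\eq}\to\C_{/T}^{\eq}$ is the space of pointings (sections) of the object $f^*X\to T$, i.e.\ the space of maps $T\to f^*X$ over $T$. The fiber over $f$ of the left-hand vertical map $\Map(T,X)\to\Map(T,S)$ is likewise the space of lifts of $f$ through $p:X\to S$, which is precisely $\Map_{/S}(T,X)\simeq\Map_{/T}(T,f^*X)$ by the universal property of the pullback $f^*X$. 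Thus both vertical fibers are canonically the same space of sections of $f^*X\to T$, and I would check that the top horizontal map induces a compatible identification: tracing through the description, $g$ corresponds to the section $\delta$ of $\pi:X\times_S X\to X$ pulled back along $f$, which recovers exactly the tautological section determined by $g$. Hence the square is cartesian.

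The step I expect to be the main obstacle is verifying carefully that the top map genuinely realizes this fiberwise identification \emph{compatibly} with the square, rather than merely that the two fibers happen to be abstractly equivalent. Concretely, I must confirm that pulling back $X\overset{\delta}{\to}X\times_S X\overset{\pi}{\to}X$ along $g:T\to X$ produces the pointed object $(f^*X\to T,\ \text{section induced by }g)$, and that this assignment is natural in $T$ so that it defines a map of the relevant fiber sequences. Once that compatibility is in place, the remaining argument is formal: in the resulting pullback square the bottom map $\Map(T,S)\to\C_{/T}^{\eq}$ is a monomorphism because $p$ is univalent (Proposition~\ref{prop:univalent=localclass}(2)), and monomorphisms are stable under pullback, so the top map $\Map(T,X)\to\C_{/T*}^{\eq}$ is a monomorphism as well. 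As this holds for every object $T$, the proof concludes.
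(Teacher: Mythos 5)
Your proposal is correct and follows essentially the same route as the paper: the paper also proves that the square \eqref{square} is cartesian by identifying the fiber of $\Map(T,X)\to\Map(T,S)$ over $f$ with $\Map_{/S}(f,p)\simeq\Map_{/T}(\id_T,f^*p)$, the space of sections of $f^*X\to T$, which is likewise the fiber of $\C_{/T*}^{\eq}\to\C_{/T}^{\eq}$, and then concludes by pullback-stability of monomorphisms. The compatibility point you flag is handled in the paper by exhibiting the fiberwise equivalence as part of a map of fiber sequences (with the adjunction equivalence on top), exactly as you anticipate.
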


\begin{proof}
Choose a basepoint $f:T\to S$ of $\Map(T,S)$ and write $f^*X$ 
for its image in $\C_{/T}^{\eq}$.
We have the commutative diagram
$$\xymatrix{
\Map_{/S}(f,p) \ar[d]\ar[rr]^\simeq && \Map_{/T}(\id_{T},f^*p) \ar[d] \\
\Map(T,X) \ar[d]\ar[r] &\C_{/T}^{\eq}{}_{*} \drpullback \ar[d]\ar[r]& \C_{/T}{}_{*}  \ar[d] \\
\Map(T,S) \ar[r] &\C_{/T}^{\eq} \ar[r]& \C_{/T}    .
}$$
The outer vertical maps form fiber sequences (over $f$ and $f^*p$ 
respectively).  
The right-hand bottom square is a pullback square since its 
right-hand side is conservative.
The top vertical map of the diagram is an equivalence by adjunction,
and since this holds for any object $f \in \Map(T,S)$, this implies
that the bottom outer square is a pullback.  Hence the left-hand 
bottom square is a pullback too.  Finally since $p$ is univalent, the
map $\Map(T,S) \to\C_{/T}^{\eq}$ is mono, and hence the map
$\Map(T,X)\too\C_{/T*}^{\eq}$ is mono too, as asserted.
\end{proof}

\begin{blanko}{$n$-Truncated univalent families in $(n+1)$-topoi.}\label{BG}
  It is tempting to suppose that any univalent family in an $(n+1)$-topos must
  belong to the class of $(n-1)$-truncated morphisms.  Indeed, an
  $(n+1)$-category is an $(n+1)$-topos if and only if it is presentable, locally
  cartesian closed, and the $(n-1)$-truncated maps form a local
  class.{\footnote{[HTT, Theorem 6.4.1.5]}} While this
  {supposition} is true for $n=\infty$, {it} fails in general for
  $n<\infty$.

Consider, for instance, the case $\C=\tau_{\leq n}\S$, the $(n+1)$-topos of $n$-truncated spaces.
By \ref{Fbundles}, any $n$-truncated space $F$ potentially gives rise to a univalent family
\[
p:F/\Aut(F)\too */\Aut(F)\simeq B\Aut(F)
\]
classifying morphisms with fiber $F$; however, $B\Aut(F)$ is typically not $n$-truncated, 
so this map need not live in $\C$. (Of course, if $F$ is $(n-1)$-truncated, 
then $\Aut(F)$ is also $(n-1)$-truncated and consequently $B\Aut(F)$ is $n$-truncated.)
Hence counterexamples arise when $F$ is only $n$-truncated but $\Aut(F)$ is $(n-1)$-truncated.

In particular, if $G$ is a discrete group and $F=BG$, which we regard as a pointed 
connected space, then there is a fiber sequence
\[
\Aut_*(BG)\too\Aut(BG)\too BG
\]
in which, by the equivalence between pointed connected objects and 
group objects of an $\infty$-topos,\footnote{[HTT 7.2.2.11].} we may identify the 
fiber $\Aut_*(BG)$ with the space of 
group automorphisms of $G$, which is discrete since $G$ is discrete.
Using the long exact sequence
\[
*\cong\pi_1\Aut_*(BG)\too\pi_1\Aut(BG)\too\pi_1 BG\cong G
\]
we calculate that $\pi_1\Aut(BG)$ is isomorphic to the kernel of the 
conjugation action map
\[
G\too\Aut_\mathrm{Gp}(G).
\]
Hence $\pi_1\Aut(BG)\cong Z(G)$, the center of $G$, and we conclude 
that $\Aut(BG)$ is discrete if and only if $G$ has trivial center.
In this case, $\Aut(BG)\simeq\pi_0\Aut(BG)\cong\mathrm{Out}(G)$, and 
we obtain a univalent family
\[
p:BG/\mathrm{Out}(G)\too B\mathrm{Out}(G)
\]
in $\mathrm{Gpd}\simeq\tau_{\leq 1}\S$ which is not $0$-truncated.
Note that there are infinite families of groups with trivial center, 
e.g.~the dihedral groups $D_n$ for $n$ odd, the symmetric groups 
$\Sigma_n$ for $n>2$, or any simple group.
\end{blanko}

\begin{blanko}{Some univalent families outside topoi.}
An interesting example of a presentable locally cartesian closed $\infty$-category 
$\C$ which is not an $n$-topos for any $0\leq n\leq\infty$ is the 
$\AA^1$-localization of the $\infty$-topos $\mathrm{Shv_{Nis}(Sm}_S)$ of 
Nisnevich sheaves of spaces on smooth $S$-schemes of 
Morel--Voevodsky~\cite{Morel-Voevodsky:IHES}.
Here $S$ denotes a fixed base scheme, which we take to be integral, Noetherian, 
and of finite Krull dimension (e.g.~$S=\mathrm{Spec}(k)$ for $k$ a field).
Recall that a Nisnevich sheaf of groups $G$ on $\mathrm{Sm}_S$ is said to be 
{\em strongly $\AA^1$-invariant} if, in the $\infty$-topos 
$\mathrm{Shv_{Nis}(Sm}_S)$, its classifying space $BG$ is $\AA^1$-local.
\end{blanko}

\begin{proposition}\cite[Remark 3.5]{SO}
The $\AA^1$-localization $\C=\mathrm{Shv_{Nis}^{\AA^1}(Sm}_S)$ of 
$\mathrm{Shv_{Nis}(Sm}_S)$ is a presentable locally cartesian 
$\infty$-category which is not an $n$-topos for any $0\leq n\leq\infty$.
\end{proposition}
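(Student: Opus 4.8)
The statement bundles two assertions: that $\C=\mathrm{Shv}_{\mathrm{Nis}}^{\AA^1}(\mathrm{Sm}_S)$ is presentable and locally cartesian closed, and that it fails to be an $n$-topos for every $0\leq n\leq\infty$. The plan is to obtain the first assertion from the theory of \loccart\ localizations developed above, and to split the second into two qualitatively different obstructions according to whether $n$ is finite or infinite.

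For presentability and local cartesian closedness, I would regard the $\AA^1$-localization as an accessible localization $L:\P\to\C\subseteq\P$ of the Nisnevich $\i$-topos $\P:=\mathrm{Shv}_{\mathrm{Nis}}(\mathrm{Sm}_S)$, which is itself presentable and locally cartesian closed. By the proposition that an accessible \loccart\ localization of a presentable locally cartesian closed $\i$-category is again presentable and locally cartesian closed, it is enough to show that $L$ is \loccart. I would verify criterion (2) of Proposition~\ref{loccartstars}: for a map $f:T\to S$ between $\AA^1$-local objects, $f^*:\P_{/S}\to\P_{/T}$ preserves $\AA^1$-equivalences. The $\AA^1$-equivalences form the strongly saturated class generated by the projections $U\times\AA^1\to U$, and $f^*$ preserves colimits since colimits are universal in $\P$; hence the maps that $f^*$ carries into $\AA^1$-equivalences form a strongly saturated class, and it suffices to treat the generators. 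This is immediate, since the pullback of $U\times\AA^1\to U$ along $f$ is again a projection $(f^*U)\times\AA^1\to f^*U$, and therefore an $\AA^1$-equivalence. This shows $L$ is \loccart\ and yields the first assertion.

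For the second assertion I would show that $\C$ violates a necessary feature of $n$-topoi in each range. For finite $n$, an $n$-topos is in particular equivalent to an $n$-category ([HTT~6.4.1.5]), so all of its mapping spaces are $(n-1)$-truncated; but $\C$ contains objects with nonvanishing homotopy sheaves in arbitrarily high degree --- for instance the $\AA^1$-local motivic Eilenberg--MacLane spaces $K(\ZZ(q),p)$, or the motivic spheres $S^{p,q}$ --- so $\C$ is not equivalent to an $n$-category, and hence is not an $n$-topos, for any finite $n$. For $n=\infty$ the obstruction is subtler. Since $\C$ is presentable and locally cartesian closed it already has universal colimits, so no argument based on universality of colimits can distinguish it from an $\i$-topos; the failure must instead be one of descent. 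The underlying phenomenon is that $L$ is not left exact --- it fails, for example, to preserve the pullback $\{1\}\times_{\AA^1}\GG_m\simeq *$, since $L\AA^1\simeq *$ collapses this point while $L\GG_m\simeq\GG_m$ is unaffected --- but because being an $\i$-topos is intrinsic to $\C$, this external statement about $L$ must be converted into the failure of an intrinsic topos property. The route I would take uses that in any $\i$-topos the delooping functor $B$ restricts to an equivalence from group objects to pointed connected objects, with inverse $\Omega$ ([HTT~7.2.2.11]): taking, after Morel, an $\AA^1$-invariant sheaf of groups $G$ that is \emph{not} strongly $\AA^1$-invariant, $G$ remains a nontrivial group object of $\C$ while $\Omega_\C B_\C G$ recovers only its strong $\AA^1$-invariantization, so $\Omega_\C B_\C G\not\simeq G$, contradicting the equivalence. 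One may alternatively appeal directly to the argument of \cite[Remark 3.5]{SO}.

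The main obstacle is the case $n=\infty$. Local cartesian closedness and the finite-$n$ cases are essentially formal once the framework of \loccart\ localizations is in place, but excluding $\i$-topos status genuinely requires a failure of descent, and such a failure cannot be detected by universality of colimits precisely because $\C$ does possess universal colimits. The crux is therefore to upgrade the easy failure of left-exactness of the $\AA^1$-localization into the failure of an intrinsic property such as the $\Omega$--$B$ equivalence, and this is exactly the step that draws on genuine input from motivic homotopy theory (the existence of $\AA^1$-invariant but not strongly $\AA^1$-invariant sheaves of groups).
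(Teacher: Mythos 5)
Your proof is correct and follows essentially the same route as the paper: local cartesian closedness via checking that basechange preserves the generating $\AA^1$-equivalences (criterion (2) of Proposition~\ref{loccartstars}), exclusion of finite $n$ via non-truncated objects, and exclusion of $n=\infty$ via the failure of the $\Omega$--$B$ equivalence on group objects that are $\AA^1$-invariant but not strongly $\AA^1$-invariant. Your additional justification for why it suffices to check the generators (strong saturation of the class of maps carried to equivalences by the colimit-preserving functor $f^*$) is a detail the paper leaves implicit, but the argument is the same.
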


\begin{proof}
Given a map of sheaves $f:U\to T$ and 
a smooth $S$-scheme $X\to T$ over $T$, we have
\[
f^*(X\times\AA^1)\cong (f^*X)\times\AA^1,
\]
which is to say that basechange preserves the generating $\AA^1$-local equivalences.
Hence the localization is \loccart\ by Proposition~\ref{loccartstars}.
Clearly $\C$ is not an $n$-topos for any $n<\infty$ since it contains objects which are not $(n-1)$-truncated, so we must show that $\C$ is not an $\infty$-topos.
This follows from the fact that there exist group objects $G$ of $\C$ which are not strongly $\AA^1$-local, so that the natural map $G\to\Omega L^{\AA^1}BG$ cannot be an equivalence.
\end{proof}

\begin{blanko}{Univalent families in motivic homotopy theory.}
Let $F$ be an $\AA^1$-local object of $\mathrm{Shv_{Nis}(Sm}_S)$ such that 
$G=\underline{\Aut}(F)$ is a strongly $\AA^1$-invariant group object.
Then $p:F/G\to */G\simeq BG$ is a univalent family in 
$\mathrm{Shv_{Nis}(Sm}_S)$ such that the source and target are $\AA^1$-local
objects, since $F/G$ sits in a fibration $F\too F/G\too BG$ and is therefore 
also $\AA^1$-local.
\end{blanko}

\begin{lemma}
  Let $G$ be a strongly $\AA^1$-invariant group over $S$ with the property
  that $\underline{\Aut}_*(G)$ is also strongly $\AA^1$-invariant.
  Then $G/\underline{\Aut}(G)\to B\underline{\Aut}(G)$ is a univalent 
  family in $\mathrm{Shv_{Nis}^{\AA^1}(Sm}_S)$.
  \qed
\end{lemma}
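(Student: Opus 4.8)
The plan is to recognize $p\colon G/\underline{\Aut}(G)\to B\underline{\Aut}(G)$ as the universal $G$-bundle, so that by Lemma~\ref{Fbundles} it is already univalent in the ambient $\i$-topos $\P=\mathrm{Shv_{Nis}(Sm}_S)$; the real content is then to transport this univalence across the $\AA^1$-localization $L\colon\P\to\C\subseteq\P$, where $\C=\mathrm{Shv_{Nis}^{\AA^1}(Sm}_S)$. Since this localization is accessible and \loccart\ (as established above), Theorem~\ref{thm:induce-along-R} applies, and tells us that $p$ is univalent in $\C$ as soon as it is a map of $\C$, i.e.\ as soon as both its source and target are $\AA^1$-local objects. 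So the whole problem reduces to two locality checks.

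The source is handled by an orbit--stabilizer identification. Since $G$ is strongly $\AA^1$-invariant, $BG$ is $\AA^1$-local, and hence so is $G\simeq\Omega BG = *\times_{BG}*$, $\AA^1$-local objects being closed under limits. Moreover the tautological action of $\underline{\Aut}(G)$ on $G$ is transitive (it contains the translations $g\mapsto L_g$, which assemble into a group homomorphism $G\to\underline{\Aut}(G)$), with stabilizer of the unit $e$ equal to $\underline{\Aut}_*(G)$; hence $G\simeq\underline{\Aut}(G)/\underline{\Aut}_*(G)$ and therefore
\[
G/\underline{\Aut}(G)\;\simeq\;B\underline{\Aut}_*(G),
\]
which is $\AA^1$-local by the hypothesis that $\underline{\Aut}_*(G)$ is strongly $\AA^1$-invariant.

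The main obstacle is the target, namely that $B\underline{\Aut}(G)$ is $\AA^1$-local, equivalently that $\underline{\Aut}(G)$ is itself strongly $\AA^1$-invariant. Here I would exploit the fiber sequence
\[
\underline{\Aut}_*(G)\too\underline{\Aut}(G)\overset{\mathrm{ev}}{\too}G
\]
given by evaluation at $e$ and split by the translations, in which both the fiber and the base are strongly $\AA^1$-invariant by hypothesis, and argue that $\underline{\Aut}(G)$ inherits this property. The difficulty is genuine: because the $\AA^1$-localization is not left exact, locality of the classifying space $B\underline{\Aut}(G)$—a geometric realization of a bar construction—does not follow formally from locality of the fiber and total space of the associated bundle $G\to B\underline{\Aut}_*(G)\to B\underline{\Aut}(G)$ (this is exactly the subtlety exploited in the previous proposition, where a local group object need not have local classifying space), and since the translation subgroup is not normal the sequence above is not a group extension that one could simply deloop. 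I would therefore settle this step either by invoking closure of strongly $\AA^1$-invariant sheaves of groups under split extensions, applied to the sequence above, or by a direct verification that $G$-bundles are $\AA^1$-invariant under the stated hypotheses; this is precisely where the assumption on $\underline{\Aut}_*(G)$ does its essential work.

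Once both source and target are known to be $\AA^1$-local, $p$ is a univalent family in $\P$ (by Lemma~\ref{Fbundles}) whose source and target lie in $\C$, and Theorem~\ref{thm:induce-along-R} then delivers univalence of $p$ in $\C=\mathrm{Shv_{Nis}^{\AA^1}(Sm}_S)$, as required.
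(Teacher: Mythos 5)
Your strategy coincides with the one the paper intends (the paper states this lemma without proof, but the preceding paragraph and Example~\ref{Gm} make the intended argument clear): $p$ is the universal $G$-bundle, hence univalent in the Nisnevich $\i$-topos by Lemma~\ref{Fbundles}, and since the $\AA^1$-localization is accessible and \loccart, Theorem~\ref{thm:induce-along-R} reduces everything to checking that source and target are $\AA^1$-local. Your treatment of the source is in fact cleaner than the paper's: the orbit--stabilizer identification $G/\underline{\Aut}(G)\simeq B\underline{\Aut}_*(G)$ (valid because the translations split the evaluation map $\underline{\Aut}(G)\to G$, making it an effective epimorphism with fiber $\underline{\Aut}_*(G)$ over the unit) derives locality of the source directly from the hypothesis, whereas the paper's remark that $F/G$ ``sits in a fibration $F\to F/G\to BG$'' presupposes locality of the base and an extension argument.

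The one step you leave open --- strong $\AA^1$-invariance of $\underline{\Aut}(G)$, i.e.\ locality of $B\underline{\Aut}(G)$ --- is indeed the crux, and it is also the step the paper elides: in Example~\ref{Gm} it is handled for $\GG_m$ by ``delooping'' the evaluation fiber sequence, and you are right that this is not formal, since $\underline{\Aut}_*(G)$ is not normal in $\underline{\Aut}(G)$ and evaluation is not a group homomorphism. What one does get from the same transitive action is the fiber sequence $G\to B\underline{\Aut}_*(G)\to B\underline{\Aut}(G)$ with $\AA^1$-local fiber and total space; concluding locality of the base still requires input beyond formal nonsense (the $\AA^1$-localization being far from left exact), for instance Morel's theory of strongly $\AA^1$-invariant sheaves of groups or a direct verification that $G$-bundles on $X\times\AA^1$ are extended from $X$. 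So your proposal correctly localizes the difficulty and shows exactly where the hypothesis on $\underline{\Aut}_*(G)$ must enter, but does not close this step; neither does the paper, so this should be regarded as a gap shared with the source rather than a defect peculiar to your route.
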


\begin{example}\label{Gm}
  Take for instance $F=\GG_m$, the multiplicative group over $S$.
  Since we have assumed $S$ integral, there are no nonconstant units 
  of $\AA^1$, so that $\GG_m$ is $\AA^1$-local.
  To calculate $\underline{\Aut}(\GG_m)$, we use the fact that
  \[
  \GG_m[2]\cong\underline{\Aut}_*(\GG_m)\too\underline{\Aut}(\GG_m)\too\GG_m
  \]
  is a fibration sequence, so it deloops to a fibration sequence
  \[
  B\GG_m[2]\too B\underline{\Aut}(\GG_m)\too B\GG_m
  \]
  (here $\GG_m[2]$ denotes the $2$-torsion subgroup of $\GG_m$, which is 
  a finite discrete group scheme over $S$).
  Since both $B\GG_m$ and $B\GG_m[2]$ are $\AA^1$-local, we obtain a
  univalent family by the lemma.
\end{example}

\begin{example}\label{E}
  One obtains a similar result for elliptic curves $C$, 
  which are $\AA^1$-local since any map $\AA^1_T\cong\AA^1\times_S T\to C$ 
  over $S$ induces a map $\AA^1_T\to C_T$ over $T$ and thus can be 
  completed to a map $\PP^1_T\to C_T$ over $T$.
  But $C_T$ has no rational curves, so any such map must be constant.
  Moreover, $\underline{\Aut}_*(C)$ is a finite group $S$-scheme, 
  as the moduli stack $\M_{1,1}$ of genus one curves with one marked 
  point is Deligne--Mumford.
  It follows that $\underline{\Aut}_*(C)$ is also strongly $\AA^1$-invariant.
\end{example}

\section{Univalence in combinatorial type-theoretic model categories}
\label{sec:type}

The univalence property is a homotopy invariant notion and is therefore
independent of the particular features of a model.  However, in order to get a
literal interpretation of type theory in homotopy theory, certain strictness
features seem to be required~\cite{Shulman:1203.3253}; these are not homotopy
invariant and are therefore features manifest only on the level of the model
category.  In this section we show that the standard Quillen model category $\M$
associated to a presentable locally cartesian closed $\infty$-category $\C$ is a
``combinatorial type-theoretic model category,'' and that any univalent family in $\C$ lifts
to a univalent fibration in $\M$.

\begin{blanko}{Combinatorial type-theoretic model categories.}\label{def:typemodel}
  By a {\em combinatorial type-theoretic model category} we understand a proper combinatorial
  model category in which the cofibrations are exactly the monomorphisms, and
  whose underlying category is locally cartesian closed.
    These conditions, which are natural from the viewpoint of
    $\infty$-categories, are slightly stronger than the five axioms
    proposed by Shulman~\cite{Shulman:1203.3253} (which in turn are  
    more restrictive than the notion of logical model category of Arndt--Kapulkin~\cite{Arndt-Kapulkin:1208.5683}), but if we add
    ``combinatorial'' to his axioms, then the
    two notions are equivalent in the sense that    
    any $\infty$-category presented by a model category of one sort
is also presented by one of the other; this is a
    consequence of Theorem~\ref{lcc=>type} below.
  The relationship between proper model categories and locally cartesian closed
  $\infty$-categories has been considered recently in~\cite{Cisinski:blogpost}, 
  where 
  Cisinski  outlined a proof of Theorem~\ref{lcc=>type} below.  Our proof is somewhat 
  different from his.
\end{blanko}

\begin{lemma}\label{rightproper}
  Let $\M$ be a model category in which the cofibrations are the monos.
  If, for any fibration $f:T\to S$ between fibrant objects, the pullback
  functor $f^* : \M_{/S} \to \M_{/T}$ preserves trivial cofibrations,
  then $\M$ is right proper.
\end{lemma}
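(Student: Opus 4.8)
The plan is to unwind right-properness to the single statement that the base change of a \emph{trivial cofibration} along a fibration is a weak equivalence, to prove that statement when the codomain is fibrant (where it is exactly the hypothesis), and then to bootstrap to an arbitrary codomain. First I would recall that $\M$ is right proper precisely when, for every fibration $\pi:W\to Z$ and every weak equivalence $w:B\to Z$, the base change $\pi^*w$ is a weak equivalence. Factoring $w$ as a trivial cofibration followed by a trivial fibration, and using that trivial fibrations are stable under base change, this reduces to showing that $\pi^*i\colon B\times_Z W\to W$ is a weak equivalence whenever $i:B\to Z$ is a trivial cofibration. When $Z$ is fibrant this is immediate: then $\pi$ is a fibration between fibrant objects (its total space is fibrant, being fibrant over $Z$), so the hypothesis says $\pi^*i$ is even a trivial cofibration.

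The hard part is passing from fibrant $Z$ to arbitrary $Z$. A naive fibrant replacement $\eta:Z\to Z'$ followed by a comparison of the pullbacks over $Z$ and over $Z'$ is circular: the resulting comparison map is, by two-out-of-three, a weak equivalence exactly when $\pi^*i$ is. The device that breaks the circularity is to keep everything over the original base $Z$. Choose a trivial cofibration $\eta:Z\to Z'$ with $Z'$ fibrant and factor $W\to Z'$ as a trivial cofibration $v:W\to W'$ followed by a fibration $\pi':W'\to Z'$, so that $W'$ and $Z'$ are fibrant. Pulling $\pi'$ back along $\eta$ gives a fibration $Z\times_{Z'}W'\to Z$, and $v$ induces a map $\theta:W\to Z\times_{Z'}W'$ over $Z$. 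The projection $Z\times_{Z'}W'\to W'$ is the base change of the trivial cofibration $\eta$ along the fibration $\pi'$ between fibrant objects, hence a trivial cofibration by the hypothesis; since $v$ equals this projection composed with $\theta$, two-out-of-three makes $\theta$ a weak equivalence. Thus $\theta$ is a weak equivalence \emph{between fibrant objects of the slice} $\M_{/Z}$, both being fibrations over $Z$.

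Now I would exploit that, for any map $i:B\to Z$, the pullback functor $i^*:\M_{/Z}\to\M_{/B}$ is right Quillen, since its left adjoint (post-composition with $i$) preserves cofibrations and trivial cofibrations; hence $i^*$ preserves weak equivalences between fibrant objects. Applying $i^*$ to $\theta$ therefore yields a weak equivalence $B\times_Z W\to B\times_Z(Z\times_{Z'}W')$, and two-out-of-three in the evident naturality square reduces the claim for $\pi^*i$ to the claim for the base change of $i$ along the fibration $Z\times_{Z'}W'\to Z$. Finally, under the canonical identification $B\times_Z(Z\times_{Z'}W')\cong B\times_{Z'}W'$, this last base change becomes, after composing with the weak equivalence $Z\times_{Z'}W'\to W'$, exactly $(\pi')^*(\eta i)$; since $\eta i:B\to Z'$ is a trivial cofibration and $\pi'$ is a fibration between fibrant objects, the hypothesis makes $(\pi')^*(\eta i)$ a trivial cofibration, and a last application of two-out-of-three finishes the argument.

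The only real obstacle is the fibrant-codomain reduction carried out in the second and third paragraphs; once the comparison is arranged over $Z$ rather than over $Z'$, so that $\theta$ is a weak equivalence between \emph{fibrations over $Z$} and the right Quillen functor $i^*$ can be brought to bear, everything else is formal bookkeeping with factorizations and two-out-of-three.
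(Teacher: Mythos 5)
Your proof is correct, and its first half coincides with the paper's: both arguments choose a trivial cofibration $\eta\colon Z\to Z'$ to a fibrant object, factor $W\to Z'$ as a trivial cofibration followed by a fibration $\pi'\colon W'\to Z'$ between fibrant objects, form the pullback $Z\times_{Z'}W'\to Z$, and use the hypothesis plus two-out-of-three to see that the comparison map $\theta\colon W\to Z\times_{Z'}W'$ (the paper's $j$) is a weak equivalence between fibrations over $Z$. The two arguments then diverge. The paper invokes the standing assumption that cofibrations are exactly the monomorphisms to upgrade $j$ to a trivial cofibration, lifts it against the fibration $W\to Z$ to produce a retraction over $Z$, and concludes by exhibiting the pullback of the given weak equivalence along $W\to Z$ as a retract of its pullback along the replaced fibration. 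You instead first reduce (via factorization and pullback-stability of trivial fibrations) to the case of a trivial cofibration $i\colon B\to Z$, and then finish by observing that $i^*\colon\M_{/Z}\to\M_{/B}$ is right Quillen and applying Ken Brown's lemma to the weak equivalence $\theta$ between fibrant objects of $\M_{/Z}$. Your route buys something real: it never uses that cofibrations are monomorphisms, so it establishes the implication for an arbitrary model category in which pullback along fibrations between fibrant objects preserves trivial cofibrations; the cost is the appeal to Ken Brown's lemma where the paper gets by with a bare lifting-and-retract argument. Both are complete proofs.
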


\begin{proof}\hspace*{-4pt}\footnote{Cisinski~\cite{Cisinski:blogpost} derives this fact (in 
  the case  where $\M$ is a topos) from a fancier result concerning
  localizers~\cite[Thm.~4.8]{Cisinski:THT}.  We think the present elementary
  proof deserves to be known.}
  Let $g: V \to W$ be any fibration, and let $w:W' \to W$ be a weak equivalence;
  we need to show that in the pullback square
  $$\xymatrix{
     V' \drpullback \ar[r]\ar[d]_v & W' \ar[d]^w \\
     V \ar[r]_g & W
  }$$
  the map $v$ is again a weak equivalence.
  Let $f: T \to S$ be a
  fibrant replacement of $g$, and let $\tilde f$ denote
  the pullback of $f$ to $W$, as in the diagram
$$
\xymatrix{
V \ar[r]_-{j}\ar@/_0.4pc/[rd]_{q_V}\ar@/^1.6pc/[rr]^g & T\times_S W 
\drpullback\ar[r]_-{\tilde f}\ar[d]_m & W \ar[d]^{q_W} \\
& T \ar[r]_f & S   .
}$$
Here $q_V$ and $q_W$ are trivial cofibrations,
and $f:T\to S$ is a fibration between
fibrant objects. Since $f^*$ preserves trivial cofibrations,
also $m$ is a trivial cofibration.
Since $q_W$ and $m$ are monos, $j$ is again a mono.
Since $q_V$ and $m$ are weak equivalences, $j$ is again a weak equivalence,
so $j$ is a trivial cofibration.  In the top triangle, written as
$$
\xymatrix{
V \ar[d]_j \ar@{=}[r] & V \ar[d]^g \\
 T\times_S W  \ar@{..>}[ru] \ar[r]_-{\tilde f} & W  ,
}$$
since $j$ is a trivial cofibration and $g$ is a fibration, there is a diagonal 
filler, so in particular
$j$ has a retraction.
Now pull back this whole filler diagram along
the weak equivalence $w: W' \to W$:
$$\xymatrix @!0 @R=30pt @C=60pt {
V' \ar[dd]_v \ar@{=}[rr] \ar[rd]_i && V' \ar[dd]_(0.7)v \ar[rd] \\
& T\times_S W' \ar[dd]_(0.28)u \ar[rr] \ar@{..>}[ru] && W' \ar[dd]^w \\
V \ar@{=}[rr] \ar[rd]_-{j} && V \ar[rd]^g & \\
& T\times_S W \ar@{..>}[ru] \ar[rr]_-{\tilde f} && W
}$$
Here $u$ is an equivalence, because it is the pullback of $w$ along $f$,
and $f^*$ preserves trivial cofibrations by assumption, and trivial
fibrations, as does any pullback functor.  Finally, since $v$ is a retract of 
$u$, it is a weak equivalence too, as required.
\end{proof}

\begin{blanko}{Underlying $\infty$-categories.}\label{underlying}
We now consider the underlying $\infty$-category of a combinatorial type-theoretic model category $\M$, which we define as the localization
\[
\C=\mathrm{N}(\M)[W_\M^{-1}]
\]
of the nerve of $\M$ along the weak equivalences $W_\M$ of $\M$ \cite[Definition
1.3.4.15]{Lurie:HA} (note that since $\M$ is combinatorial type-theoretic, monomorphisms are
cofibrations, so every object of $\M$ is automatically
cofibrant).\footnote{Provided a model category $\M$ has functorial
factorizations, which is almost always satisfied in practice and automatic when
$\M$ is combinatorial, there is no need to restrict to the cofibrant objects
when forming the underlying $\infty$-category $\mathrm{N}(\M)[W_\M^{-1}]$; see
\cite[Remark 1.3.4.16]{Lurie:HA}.} The formation of the underlying $\infty$-category is
evidently functorial in left Quillen functors of combinatorial type-theoretic model
categories, as left Quillen functors preserve weak equivalences of cofibrant
objects (and all objects are cofibrant).

\begin{lemma}\label{lem:l1}
Let $\M$ be a combinatorial type-theoretic model category and let $S$ be a fibrant object of $\M$.
Then the canonical functor $\mathrm{N}(\M_{/S})[W_{/S}^{-1}]\to\C_{/S}$ is an equivalence.
\end{lemma}

\begin{proof}
Since the functor is clearly essentially surjective, it is enough to show that it is also fully faithful.
This follows from the fact that the derived mapping space in the slice $\M_{/S}$, between the two objects $X\to S$ and $Y\to S$, is given as the homotopy fiber
\[
\Map_{/S}(X,Y)\too\Map(X,Y)\too\Map(X,S)
\]
over the point $X\to S$ in $\Map(X,S)$ (see \cite{DS} for an in-depth comparison of derived mapping spaces in model categories and mapping spaces in quasicategories).
\end{proof}

Since $\M$ is in particular a combinatorial model category, we may form the Bousfield localization of $\M$ with respect to any set of morphisms $V$ of $\M$; this will be denoted $V^{-1}\M$.
On the other hand, we may form the Bousfield localization the underlying $\infty$-category $\C$ of $\M$, which will similarly be denoted $V^{-1}\C$.
Note that $V^{-1}\C$ satisfies the following universal property: for any presentable $\infty$-category $\D$,
\[
\Fun^\mathrm{L}(V^{-1}\C,\D)\subset\Fun^\mathrm{L}(\C,\D)
\]
is the full subcategory the $\infty$-category $\Fun^\mathrm{L}(\C,\D)$ of
colimit-preserving functors from $\C$ to $\D$ which send the arrows in $V$ to
equivalences \cite[Proposition 5.5.4.20]{Lurie:HTT}.  From this we deduce
that Bousfield localization is
compatible with passage to the underlying $\infty$-category; this is the content of
the following proposition.

\begin{lemma}\label{prop:BL}
The functor $V^{-1}\C\to\mathrm{N}(V^{-1}\M)[W_{V^{-1}\M}^{-1}]$, induced from the colimit-preserving functor of presentable $\infty$-categories $\C=\mathrm{N}(\M)[W_\M^{-1}]\to\mathrm{N}(V^{-1}\M)[W_{V^{-1}\M}^{-1}]$ which inverts the arrows in $V$, is an equivalence.
\end{lemma}

\begin{proof}
As a category, $V^{-1}\M$ is equal to $\M$; they only differ as model categories.
Hence we have a composite functor $\mathrm{N}(V^{-1}\M)=\mathrm{N}(\M)\to\C\to V^{-1}\C$, and this functor sends the arrows of $W_{V^{-1}\M}$ (the $V$-local weak equivalences in $\M$) to equivalences in $V^{-1}\C$, essentially by definition of local weak equivalence.
We therefore obtain a functor $\mathrm{N}(V^{-1}\M)[W_{V^{-1}\M}^{-1}]\to V^{-1}\C$ which is inverse (up to equivalence) to the functor $V^{-1}\C\to\mathrm{N}(V^{-1}\M)[W_{V^{-1}\M}^{-1}]$.
\end{proof}

Since we are primarily concerned with locally cartesian closed $\infty$-categories, 
it is natural to ask whether 
passing to the slice over an object of a combinatorial type-theoretic model category is 
compatible with passing to the slice on the level of underlying $\infty$-categories.

\begin{lemma}\label{lem:last}
With $V$ as above, for each $V$-local fibrant object $S\in \M$, let $V_S$ denote the inverse image of $V$ under the projection $\M_{/S}\to\M$.
Then the induced Bousfield localization $\M_{/S} \to V_S^{-1} \M_{/S}$ has as underlying 
functor of $\infty$-categories $\C_{/S} \to V_S^{-1} \C_{/S}$.
Moreover, there is a canonical equivalence $V_S^{-1}\C_{/S}\simeq (V^{-1} \C)_{/S}$.
\end{lemma}

\begin{proof}
The first claim is an immediate consequence of the previous lemma.
We have a natural map $V^{-1}_S\C_{/S}\too (V^{-1}\C)_{/S}$
by the universal property of Bousfield localization: the arrows in $V^{-1}_S$ become equivalences in $(V^{-1}\C)_{/S}$.
We claim that an object $X\to S$ of $\C_{/S}$ is $V_S$-local if and only if $X$ is $V$-local.
The ``if'' direction is clear; for the ``only if'' direction, let $f:U\to T$ be an arrow of $V$.
This yields a commutative square
\[
\xymatrix{
\Map(T,X)\ar[r]\ar[d] & \Map(T,S)\ar[d]\\
\Map(U,X)\ar[r] & \Map(U,X)}
\]
in which the right vertical map is an equivalence.
It is therefore enough to check that, for each point $p:T\to S$ in $\Map(T,S)$, the fibers (over $p$ and $q=p\circ f$) are equivalent.
But these fibers are $\Map_S(T,X)$ and $\Map_S(U,X)$, respectively, and they are equivalent since $X$ is $V_S$-local by assumption.

Thus we obtain a commutative triangle
\[
\xymatrix{V^{-1}_S\C_{/S}\ar[rr]\ar[rd] & & (V^{-1}\C)_{/S}\ar[ld]\\
& \C_{/S} &}
\]
in which both functors into $\C_{/S}$ are fully faithful.
It follows that the top map is fully faithful, and it is easily seen to be essentially surjective as well.
\end{proof}
\end{blanko}

\begin{proposition}\label{wtype=>lcc}
  Let $\M$ be a combinatorial type-theoretic model category.  Then the underlying
  $\infty$-category $\C=\mathrm{N}(\M)[W^{-1}]$ of $\M$ is presentable and locally
  cartesian closed.
\end{proposition}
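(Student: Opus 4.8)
The plan is to establish the two properties separately. Presentability is the easy half: since a type-theoretic model category is by definition combinatorial, its underlying $\i$-category $\C=\mathrm{N}(\M)[W^{-1}]$ is presentable by the standard identification of combinatorial model categories with presentable $\i$-categories (cf.\ [HTT A.3.7.6], together with the description of the underlying $\i$-category in \ref{underlying}). So all the content lies in local cartesian closure.

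For local cartesian closure I would use the characterization recalled at the start of Section~\ref{sec:lcc}: since $\C$ is presentable, it is locally cartesian closed precisely when every pullback functor $f^*:\C_{/S}\to\C_{/T}$ admits a right adjoint. Every morphism of $\C$ may be represented by a fibration $f:T\to S$ between fibrant objects of $\M$, and by \ref{underlying} the slices $\C_{/S},\C_{/T}$ are modeled by the model-categorical slices $\M_{/S},\M_{/T}$. Since $\M$ is locally cartesian closed as a $1$-category, the strict pullback functor $f^*:\M_{/S}\to\M_{/T}$ has a strict right adjoint $f_*$ (dependent product), and I would argue that this strict $f^*$ models the $\i$-categorical pullback and is left Quillen, producing the desired right adjoint at the $\i$-level; establishing this identification is part of the work below.

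The crux is to show that $f^*\dashv f_*$ is a Quillen adjunction and that $f^*$ is homotopical. Preservation of cofibrations is immediate: the cofibrations are exactly the monomorphisms, and monomorphisms are stable under pullback. Preservation of trivial cofibrations, and more generally homotopicality of $f^*$, both follow from right properness by a single base-change argument: for a weak equivalence $w:X\to Y$ over $S$, the map $f^*w$ fits into a pullback square whose bottom edge is $w$ and whose right edge is the projection $Y\times_S T\to Y$, a pullback of the fibration $f$ and hence itself a fibration; right properness then forces $f^*w$ to be a weak equivalence, and a monomorphism whenever $w$ is one. Thus $f^*$ is left Quillen, the induced $\i$-functor preserves colimits, and its right adjoint $\mathbb{R}f_*$ is the sought-after $\i$-categorical dependent product, so $\C$ is locally cartesian closed.

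The step I expect to be the main obstacle is the identification of the genuine $\i$-categorical pullback with this strict left-Quillen functor $f^*$. A priori the $\i$-pullback arises as the right-derived functor of the Quillen adjunction $f_!\dashv f^*$ (postcomposition against pullback), whereas the right adjoint $\mathbb{R}f_*$ is produced from $f^*\dashv f_*$; reconciling these requires precisely the homotopicality of $f^*$ established above, so that its left- and right-derived functors agree and both compute the pullback in $\C$. This homotopicality is in turn exactly where right properness, together with the hypothesis that the cofibrations are the monomorphisms, is indispensable — without the latter, $f^*$ need not preserve cofibrations, and without the former it need not preserve weak equivalences.
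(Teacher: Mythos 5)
Your proof is correct and follows essentially the same route as the paper's: presentability comes from combinatoriality, and local cartesian closure is obtained by representing each morphism of $\C$ as a fibration $f$ between fibrant objects, identifying the slices of $\C$ with the localized slice model categories, and showing that the strict pullback $f^*:\M_{/S}\to\M_{/T}$ is left Quillen (it preserves cofibrations because these are the pullback-stable monomorphisms, and preserves weak equivalences by right properness, exactly via your base-change square). The paper compresses the left-Quillen verification into the phrase ``by hypothesis'' and delegates both the induced adjunction of underlying $\i$-categories and the derived-functor identification you single out as the main obstacle to a citation of [HA 1.3.4.26].
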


\begin{proof}
  Since $\M$ is combinatorial, $\C$ is presentable.\footnote{[HA 1.3.4.22]} Let
  $f:T\to S$ be an arrow of $\C$, which (replacing $f$ by an equivalent arrow if
  necessary) we may assume is a fibration between fibrant objects in $\M$.  Then
  we have equivalences
\[
\C_{/S}\simeq\mathrm{N}(\M_{/S})[W_S^{-1}],
\]
and similarly for $T$.
Since by hypothesis, $f^*:\M_{/S}\to\M_{/T}$ is a left Quillen functor, 
it follows\footnote{[HA 1.3.4.26]}
that the induced functor
\[
f^*:\C_{/S}\simeq\mathrm{N}(\M_{/S})[W_S^{-1}]\too\mathrm{N}(\M_{/T})[W_T^{-1}]\simeq\C_{/T}
\]
admits a right adjoint $f_*$.
\end{proof}

The following result can be regarded as a converse result, stated in a relative
setting.

\begin{proposition}\label{lccloc=>type}
Let $\M$ be a combinatorial type-theoretic model category, let $V$ be a class of maps of $\M$ of small 
generation\footnote{\cite[Remark 5.5.4.7]{Lurie:HTT}}, 
and let $\M\to V^{-1}\M$ be the left Bousfield localization of $\M$ along $V$.
If the induced localization of $\C= \mathrm{N}(\M)[W^{-1}]$,
\[
L:\C\too V^{-1}\C,
\]
is locally cartesian then $V^{-1}\M$ is again a combinatorial type-theoretic model category.
\end{proposition}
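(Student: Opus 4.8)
The plan is to check the four defining properties of a type-theoretic model category (Definition~\ref{def:typemodel}) for $V^{-1}\M$, noting that three of them are inherited essentially formally from $\M$ while only right properness uses the locally cartesian hypothesis. First, the easy conditions: left Bousfield localization changes neither the underlying $1$-category nor the class of cofibrations, so the underlying category of $V^{-1}\M$ is that of $\M$ (hence locally cartesian closed) and its cofibrations are again exactly the monomorphisms. Since $\M$ is combinatorial and $V$ is of small generation, $V^{-1}\M$ is again combinatorial, and since $\M$ is left proper, so is $V^{-1}\M$; these are standard facts about left Bousfield localizations of combinatorial model categories. It therefore remains only to prove that $V^{-1}\M$ is right proper.

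For this I would invoke Lemma~\ref{rightproper}, which applies precisely because the cofibrations of $V^{-1}\M$ are the monomorphisms. Thus it suffices to show that for every fibration $f:T\to S$ between fibrant objects of $V^{-1}\M$, the pullback functor $f^*:(V^{-1}\M)_{/S}\to(V^{-1}\M)_{/T}$ preserves trivial cofibrations. The fibrant objects of $V^{-1}\M$ are the $V$-local fibrant objects of $\M$, and a map between such objects is a fibration in $V^{-1}\M$ if and only if it is a fibration in $\M$; in particular $f$ is an $\M$-fibration between fibrant objects, and $S,T$ present local objects of $\C=\mathrm{N}(\M)[W^{-1}]$. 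A trivial cofibration of $(V^{-1}\M)_{/S}$ is a monomorphism $g:A\to B$ over $S$ which is a $V$-local equivalence. Pullback always preserves monomorphisms, so $f^*g$ is again a monomorphism; the only remaining point is to check that $f^*g$ is again a $V$-local equivalence.

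Here both hypotheses enter. Because $\M$ is right proper and $f$ is a fibration, the strict pullback functor $f^*:\M_{/S}\to\M_{/T}$ sends $W$-weak equivalences to $W$-weak equivalences: any weak equivalence $h:A\to B$ over $S$ is carried to its pullback along the fibration $T\times_S B\to B$, via the identification $T\times_S A\cong(T\times_S B)\times_B A$. Consequently $f^*$ descends to, and is identified with, the $\i$-categorical pullback $f^*:\C_{/S}\to\C_{/T}$ under the equivalences of~\ref{underlying}. Now $g$ being a $V$-local equivalence means precisely that its class $[g]$ in $\C_{/S}$ is an $L_S$-local equivalence. Since $S$ and $T$ are local objects and $L$ is locally cartesian, criterion~(2) of Proposition~\ref{loccartstars} shows that $f^*$ carries local equivalences to local equivalences; hence $f^*[g]$, represented by the strict pullback $f^*g$, is an $L_T$-local equivalence, i.e.\ $f^*g$ is a $V$-local equivalence. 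By Lemma~\ref{rightproper}, $V^{-1}\M$ is right proper, and therefore it is a type-theoretic model category.

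The main obstacle is the passage from the strict model-categorical pullback along $f$ to the homotopy pullback computed in $\C$: it is right properness of $\M$ that makes the strict functor model the $\i$-categorical $f^*$, and this is exactly what lets the purely $\i$-categorical locally cartesian hypothesis be brought to bear on the trivial cofibrations of $V^{-1}\M$. One must also keep track of the compatibility of slicing with localization recorded in~\ref{underlying}, so that ``$V$-local equivalence over $S$'' and ``$L_S$-local equivalence'' name the same maps.
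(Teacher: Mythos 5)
Your proposal is correct and follows essentially the same route as the paper: reduce everything to right properness via Lemma~\ref{rightproper}, identify the model-level slices with the $\i$-categorical ones as in~\ref{underlying}, and use the locally cartesian hypothesis (the commuting square $L_T f^*\simeq f^*L_S$, equivalently criterion~(2) of Proposition~\ref{loccartstars}) to conclude that $f^*$ sends $V_S$-local equivalences to $V_T$-local equivalences. Your extra remark that right properness of $\M$ is what lets the strict pullback along the fibration $f$ model the $\i$-categorical $f^*$ is a useful point that the paper leaves implicit.
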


\begin{proof}
To show that $V^{-1}\M$ is combinatorial type-theoretic, the only non-trivial point is
to establish that it is right proper.  By Lemma~\ref{rightproper}, 
it is enough to show that
$f^*$ preserves local equivalences whenever $f:T\to S$ is a
$V$-local fibration between $V$-local fibrant objects.
Since $S$ is a $V$-local fibrant object, we have equivalences
\[
\C_{/S}\simeq\mathrm{N}(\M_{/S})[W_S^{-1}]
\]
by Lemma \ref{lem:l1} and
\[
V_S^{-1}\C_{/S}\simeq\mathrm{N}(V_S^{-1}\M_{/S})[W_S^{-1}]
\]
by Lemmas \ref{prop:BL} and \ref{lem:last}, as well as the corresponding equivalences involving $T$.
There is an induced pullback functor
\[
f^*:V_S^{-1}\C_{/S}\simeq (V^{-1}\C)_{/S}\too (V^{-1}\C)_{/T}\simeq 
V_T^{-1}\C_{/T} .
\]
To say that the localization $\C \to V^{-1}\C$ is
\loccart\
means that the following diagram commutes:
\[
\xymatrix{
\C_{/S}\ar[r]^-{L_S}\ar[d]_{f^*} & V_S^{-1}\C_{/S}\ar[d]^{f^*}\\
\C_{/T}\ar[r]_-{L_T} & V_T^{-1}\C_{/T}.
}
\]
Hence
\[
L_T f^*:\C_{/S}\to V_T^{-1}\C_{/T}
\]
inverts maps in $V_S$ (i.e.~carries every morphism in $V_S$ to an equivalence).
It follows that, on the level of model categories,
\[
f^*:\M_{/S}\too\M_{/T}
\]
sends $V_S$-local equivalences to $V_T$-local equivalences,
as required.
\end{proof}

\begin{blanko}{Constructing models.}
  Given a presentable $\infty$-category $\C$, one can use its standard presentation as a
  localization of a presheaf $\infty$-category to construct a combinatorial
  simplicial model category $\M$ whose underlying $\infty$-category is equivalent to
  $\C$.  This model category can be obtained as a left Bousfield localization of
  a simplicial presheaf (model) category, essentially by copying the
  generators and relations.  While this construction is not unique, it is
  sort of standard, and we shall refer only to this construction when talking
  about a model of $\C$.
  We recall the construction\footnote{\cite[Proposition~A.3.7.6]{Lurie:HTT}}
  as we will require some details of it.
  
  Since $\C$ is presentable, it is by definition an accessible
  localization (for some sufficiently large cardinal $\kappa$) of the
  $\infty$-category $\mathrm{Pre}(\C^\kappa)$ of presheaves (of $\infty$-groupoids) on
  the $\infty$-category $\C^\kappa$ of $\kappa$-compact objects in $\C$.  More
  specifically, writing $U$ for the (small) set of maps of the form
\[
\colim_i \Map(-,x_i)\too\Map(-,\colim_i x_i),
\]
for $x:I\to\C^\kappa$ a diagram indexed on a $\kappa$-small simplicial set $I$,
we have an equivalence $\C\simeq U^{-1}\mathrm{Pre}(\C^\kappa)$.
Note that $\mathrm{Pre}(\C^\kappa) \to \C$ is \loccart\ by Proposition~\ref{lcc=lcc}.

Let $\simplC$ be a fibrant simplicial category such 
that $\C^{\kappa}\simeq\mathrm{N}(\simplC)$,
and consider the simplicial category $\Pre_\Delta(\simplC)$ of 
simplicial presheaves on $\simplC$,
endowed with the injective model structure.
Let $V$ be the smallest saturated class generated by the set of maps of the form
\[
\hocolim_i\Map(-,x_i)\too\Map(-,\hocolim_i x_i)
\]
in $\Pre_\Delta(\simplC)$ corresponding to the maps in $U$,
and let $\M$ be the left Bousfield localization of $\Pre_\Delta(\simplC)$ by $V$.
This $\M$ is a model for $\C$,
in the sense that
we have equivalences
\[
\C\simeq U^{-1}\Pre(\C^\kappa)\simeq 
V^{-1}\mathrm{N}(\Pre_\Delta(\simplC))[W^{-1}],
\]
where $W$ denotes the weak equivalences in $\Pre_\Delta(\simplC)$.
By Lemma \ref{prop:BL}, it is clear that the induced localization on the level of $\infty$-categories is precisely $\mathrm{Pre}(\C^\kappa)\to \C$.
\end{blanko}

Our main theorem in this section is the following.

\begin{theorem}\label{lcc=>type}
Let $\C$ be a presentable locally cartesian closed $\infty$-category.
Let $\M$  be the combinatorial simplicial model category constructed above,
whose underlying $\infty$-category is equivalent to $\C$.
Then $\M$ is a combinatorial type-theoretic model category.
\end{theorem}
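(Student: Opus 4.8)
The plan is to recognize $\M$ as a left Bousfield localization of a type-theoretic model category and to invoke Proposition~\ref{lccloc=>type}, which is precisely tailored to transport the type-theoretic property across such a localization once one knows that the induced localization of underlying $\i$-categories is locally cartesian. Thus the proof splits into two tasks: first, checking that the simplicial presheaf category $\Pre_\Delta(\simplC)$ with its injective model structure is already type-theoretic; and second, identifying the $\i$-categorical shadow of the localizing class $V$ and verifying it is locally cartesian.

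For the first task I would verify the defining conditions of~\ref{def:typemodel} for $\Pre_\Delta(\simplC)$. It is combinatorial and simplicial by the standard construction of the injective model structure on simplicial presheaves; its cofibrations are exactly the monomorphisms, this being the defining feature of the injective structure; its underlying ordinary category is a presheaf category, hence a (Grothendieck) topos and in particular locally cartesian closed; and it is proper, being the injective model structure on diagrams valued in the proper model category of simplicial sets (left properness is immediate since every object is cofibrant, and right properness is standard for injective structures on diagrams in a proper model category). None of these require new input.

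For the second task, recall from the construction that $\M = V^{-1}\Pre_\Delta(\simplC)$, where $V$ is the strongly saturated class generated by the small set of homotopy-colimit comparison maps corresponding to $U$, so that $V$ is of small generation, and that the induced localization on underlying $\i$-categories is precisely the standard presentation $\Pre(\C^\kappa)\to\C$. By Proposition~\ref{lcc=lcc}, this localization is accessible and locally cartesian. I would then feed these facts directly into Proposition~\ref{lccloc=>type}: with $\Pre_\Delta(\simplC)$ type-theoretic, $V$ of small generation, and the induced $\i$-localization locally cartesian, the proposition yields that $V^{-1}\Pre_\Delta(\simplC)=\M$ is again type-theoretic.

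The main obstacle is right properness, the only defining property that is not automatically inherited under left Bousfield localization: combinatoriality, the identification of cofibrations with monomorphisms, left properness, and local cartesian closedness of the underlying category all survive formally, since the localization neither changes the underlying category nor the cofibrations and preserves left properness and small generation. Right properness is exactly the content that Proposition~\ref{lccloc=>type} extracts from the locally cartesian hypothesis, via the reduction in Lemma~\ref{rightproper} of right properness to the preservation of local trivial cofibrations by pullback along fibrations between fibrant objects. Consequently the real work has already been isolated into the earlier Proposition~\ref{lcc=lcc} (that the standard presentation is locally cartesian) and Proposition~\ref{lccloc=>type} (that this suffices for right properness at the model-category level), and the present theorem is their combination.
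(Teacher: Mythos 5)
Your proposal is correct and follows essentially the same route as the paper: establish that $\Pre_\Delta(\simplC)$ with the injective model structure is a type-theoretic model category, then apply Proposition~\ref{lccloc=>type} to the left Bousfield localization along $V$, using Proposition~\ref{lcc=lcc} to see that the induced $\i$-categorical localization is the standard presentation and hence locally cartesian. The only cosmetic difference is that the paper justifies local cartesian closedness of the underlying category of $\Pre_\Delta(\simplC)$ by viewing it as internal presheaves on an internal category in $\Set_\Delta$, hence a topos, rather than asserting outright that it is a presheaf category.
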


\begin{proof}
  We shall apply Proposition~\ref{lccloc=>type} to the left Bousfield localization
  $\Pre_\Delta(\simplC) \to \M$, so we must first establish that $\Pre_\Delta(\simplC)$ 
  is itself a combinatorial type-theoretic model category.  First of all,
  $\Pre_\Delta(\simplC)$ is locally cartesian closed
  as an ordinary category: the simplicial category
  $\simplC$ can be viewed as an internal category object in $\Set_\Delta$, and
  accordingly $\Pre_\Delta(\simplC)$ can be viewed as the category of
  presheaves on $\simplC$ internally to $\Set_\Delta$.  But this is always a 
  topos.\footnote{See for example Theorem 6.5 of Barr--Wells~\cite{Barr-Wells}.}
  Second, $\Pre_\Delta(\simplC)$ is proper and has the monos as cofibrations
  since the model category of simplicial sets (with its standard ``Kan'' model
  structure) satisfies these conditions.  So $\Pre_\Delta(\simplC)$ is a
  combinatorial type-theoretic model category.  Now the left Bousfield localization
  $\Pre_\Delta(\simplC) \to \M$ induces the standard presentation
  $\Pre(\C^\kappa) \to \C$ at the $\infty$-category level, and since this
  localization is \loccart\ by Proposition~\ref{lcc=lcc}, we conclude by
  Proposition~\ref{lccloc=>type}  that $\M$ is a combinatorial type-theoretic model category.
\end{proof}

\begin{definition}
  Let $\M$ be a combinatorial type-theoretic model category and let $p:X\to S$ be a 
  fibration between fibrant objects of $\M$.
  Then $p$ is said to be {\em univalent} if the map
  \[
  u:\Map_{/S\times S}(-,S)\too\sEq_{/S}(X,X).
  \]
  is an isomorphism of $\H$-enriched presheaves.  
  Here $\H$ denotes the homotopy category of simplicial sets.
\end{definition}

\begin{proposition}\label{prop:univ=univ}
  Let $\M$ be a combinatorial type-theoretic model category.  A fibration $p:X\to S$ between
  fibrant objects of $\M$ is univalent in the model category $\M$ if and only if
  it is univalent in the underlying $\infty$-category $\mathrm{N}(\M)[W^{-1}]$.
\end{proposition}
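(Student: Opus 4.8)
The plan is to observe that both notions of univalence are detected objectwise on $\Ho(\M_{/S\times S})$, and that under the standard identification of the $\H$-enriched hom-objects of $\Ho(\M_{/S\times S})$ with the mapping spaces of the underlying $\infty$-category, the two comparison maps $u$ agree.

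First I would reduce the model-categorical condition to a pointwise one. A natural transformation of $\H$-enriched presheaves on $\Ho(\M_{/S\times S})$ is an isomorphism precisely when each of its components is an isomorphism in $\H$; thus $p$ is univalent in $\M$ if and only if, for every object $(f,g):T\to S\times S$ of $\Ho(\M_{/S\times S})$ (which we may represent by a fibration between fibrant objects), the component
\[
u_{(f,g)}:\Hom_{\Ho(\M_{/S\times S})}((f,g),\delta)\too \Eq_{/T}(f^*X,g^*X)
\]
is an isomorphism in $\H$. On the other side, $\infty$-categorical univalence asks that $u:\delta\to\underline{\Eq}_{/S}(X,X)$ be an equivalence in $\C_{/S\times S}=(\mathrm{N}(\M)[W^{-1}])_{/S\times S}$; since a map of space-valued sheaves is an equivalence if and only if it is an equivalence on every object, and since—exactly as in the proof of Proposition~\ref{prop:univalent=localclass}—the fibre of $u$ over $(f,g)$ is the map $\Map_{/S\times S}((f,g),\delta)\to \Eq_{/T}(f^*X,g^*X)$, this too is a pointwise condition over the objects $(f,g)$ of $\C_{/S\times S}$.

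The heart of the argument is the identification of these two pointwise statements. By the slicing compatibility recorded in~\ref{underlying}, the underlying $\infty$-category of $\M_{/S\times S}$ is $\C_{/S\times S}$, and the $\H$-enriched hom-object $\Hom_{\Ho(\M_{/S\times S})}((f,g),\delta)$ is by definition the derived mapping space of $\M_{/S\times S}$, which is precisely the mapping space $\Map_{/S\times S}((f,g),\delta)$ viewed as an object of $\H$. The target $\Eq_{/T}(f^*X,g^*X)$ is the same space of equivalences in both descriptions, being cut out of $\Map_{/T}(f^*X,g^*X)$ by the homotopy-invariant condition of admitting an inverse. Finally, both comparison maps $u_{(f,g)}$ are induced by the same datum—the diagonal section $\delta:X\to X\times_S X$, equivalently the identity equivalence $\id_X$ over $S$—so by naturality of the (enriched) Yoneda lemma they agree under the identification above. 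Since an isomorphism in $\H=\Ho(\mathrm{sSet})$ is exactly a weak homotopy equivalence, i.e.\ an equivalence of $\infty$-groupoids, the two pointwise conditions coincide, and the two notions of univalence agree.

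The main obstacle is this identification: rigorously matching the $\H$-enriched hom-objects of the homotopy category $\Ho(\M_{/S\times S})$ with the $\infty$-categorical mapping spaces of $\mathrm{N}(\M_{/S\times S})[W^{-1}]$, and checking that the natural transformation $u$ is carried to $u$ under the comparison. This is where one must invoke the compatibility between derived mapping spaces in a model category and mapping spaces in its Dwyer--Kan localization, together with the fact recorded in~\ref{underlying} that localization commutes with slicing; everything else is a formal consequence of Yoneda and the objectwise detection of equivalences.
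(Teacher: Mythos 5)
Your proposal is correct and follows essentially the same route as the paper: the paper packages your pointwise argument as the statement that the induced functor $\Ho(\Pre(\mathrm{N}(\M_{/S\times S})[W^{-1}]))\to\Pre_{\H}(\Ho(\M_{/S\times S}))$ is conservative, which is exactly the combination of objectwise detection of equivalences with the identification of $\H$-enriched hom-objects of the homotopy category with the derived (equals $\infty$-categorical) mapping spaces. Your explicit check that both comparison maps $u$ are classified by $\id_X$ under Yoneda is a point the paper leaves implicit, so nothing is missing.
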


\begin{proof}
  In general, if $\C\to\D$ is a functor of $\infty$-categories, then it induces a
  functor $\Ho(\C)\to\Ho(\D)$ of $\H$-enriched categories.  Moreover, this
  assignment is natural, in that it induces a functor
  \[
  \Ho(\Fun(\C,\D))\too\Fun_\H(\Ho(\C),\Ho(\D)),
  \]
  where the latter denotes the category of $\H$-enriched functors, which 
  is conservative since a (homotopy class of) natural transformation of 
  functors from $\C$ to $\D$ is a natural equivalence if an only if the 
  resulting natural transformation of $\H$-enriched functors $\Ho(\C)\to\Ho(\D)$ 
  is a natural isomorphism.

  In particular, taking $\C=\mathrm{N}(\M_{/S\times S})[W^{-1}]^{\op}$ 
  and $\D=\mathrm{N}(\Set_\Delta)[W^{-1}]$, we obtain a conservative functor
  \[
  \Ho(\Pre(\mathrm{N}(\M_{/S\times S})[W^{-1}]))\too\Pre_{\H}(\Ho(\M_{/S\times S})).
  \]
  By definition, $p:X\to S$ is univalent in $\M$ if and only if $u$ is a 
  natural isomorphism in $\Pre_{\H}(\Ho(\M_{/S\times S}))$, if and only if 
  $u$ is a natural equivalence in $\Ho(\Pre(\mathrm{N}(\M_{/S\times S})[W^{-1}]))$.
\end{proof}

\newcommand{\arxiv}[1]{ArXiv:#1}

\small

\end{document}